\newtheorem{thm}{Theorem}[section]
\newtheorem{lem}[thm]{Lemma}
\theoremstyle{definition}
\newtheorem{defn}[thm]{Definition}
\newtheorem{rem}[thm]{Remark}
\newtheorem{exam}[thm]{Example}
\newtheorem{cons}[thm]{Construction}
\newcommand{\bC}{{\mathbb{C}}}
\newcommand{\bN}{{\mathbb{N}}}
\newcommand{\bR}{{\mathbb{R}}}
\newcommand{\A}{{\mathcal{A}}}
\newcommand{\F}{{\mathcal{F}}}
\renewcommand{\H}{{\mathcal{H}}}
\renewcommand{\L}{{\mathcal{L}}}
\newcommand{\M}{{\mathcal{M}}}
\renewcommand{\P}{{\mathcal{P}}}
\newcommand{\X}{{\mathcal{X}}}
\renewcommand{\phi}{\varphi}
\newcommand{\qand}{\quad\text{and}\quad}
\newcommand{\qqand}{\qquad\text{and}\qquad}
\newcommand{\tr}{\mathrm{Tr}}
\newcommand{\diag}{\mathrm{diag}}
\newcommand{\op}{\mathrm{op}}
\renewcommand{\Re}{\mathrm{Re}}
\renewcommand{\Im}{\mathrm{Im}}
\tikzset{Box/.style={very thick, rounded corners}}
\tikzset{marked/.style={star, star point height = .75mm, star points =5, fill=black,minimum size=2mm, inner sep=0mm} }
\tikzset{verythickline/.style = {line width=7pt}}
\tikzset{thickline/.style = {line width=5pt}}
\tikzset{medthick/.style = {line width=3pt}}
\tikzset{med/.style = {line width=2pt}}
\tikzset{count/.style = {fill=white,circle,draw,thin, inner sep=2pt}}
\tikzset{rcount/.style = {fill=white,rectangle,draw,thin,inner sep=2pt, rounded corners}}
\tikzset{cpr/.style = {draw,fill=white,rectangle,thin, rounded corners}}
\begin{document}

\nocite{*}

\title[Some Bi-Matrix Models for Bi-Free Limit Distributions]{Some Bi-Matrix Models for Bi-Free Limit Distributions}

\author{Paul Skoufranis}
\address{Department of Mathematics, Texas A\&M University, College Station, Texas, USA, 77843}
\email{pskoufra@math.tamu.edu}

\subjclass[2010]{46L54, 46L53}
\date{\today}
\keywords{bi-free pairs of faces, bi-matrix models, central limit distributions}

\begin{abstract}
In this paper, an analogue of matrix models from free probability is developed in the bi-free setting.  A bi-matrix model is not simply a pair of matrix models, but a pair of matrix models where one element in the pair acts by left-multiplication on matrices and the other element acts via a `twisted'-right action.  The asymptotic distributions of bi-matrix models of Gaussian random variables tend to bi-free central limit distributions with certain covariance matrices.  Furthermore, many classical random matrix results immediately generalize to the bi-free setting.  For example, bi-matrix models of left and right creation and annihilation operators on a Fock space have joint distributions equal to left and right creation and annihilation operators on a Fock space and are bi-freely independent from the left and right action of scalar matrices.  Similar results hold for bi-matrix models of $q$-deformed left and right creation and annihilation operators provided asymptotic limits are considered.  Finally bi-matrix models with asymptotic limits equal to Boolean independent central limit distributions and monotonically independent central limit distributions are constructed.
\end{abstract}

\maketitle

\section{Introduction}

Since the notion of bi-free pairs of faces was introduced by Voiculescu in \cite{V2014}, the theory has quickly developed by generalizing many ideas and results from free probability. For example, \cite{V2014} determined the bi-free central limit distributions.  On the combinatorial side, Mastnak and Nica in \cite{MN2013} introduced a collection of partitions for bi-free pairs of faces that was postulated to be analogous to the role non-crossing partitions play in free probability.  In \cite{CNS2014-1}, the postulate of Mastnak and Nica was confirmed to be correct.  Subsequently \cite{CNS2014-2} generalized such notions to the operator-valued setting where things get `interesting'.  

Following the development of free probability, Voiculescu in \cite{V2013-2} constructed a bi-free partial $R$-transform as an analogue of his $R$-transform from \cite{V1986}, which plays an important role in free probability.  Furthermore, Voiculescu in \cite{V2015} developed a bi-free partial $S$-transform and combinatorial proofs were later found in \cite{S2014} and \cite{S2015} respectively.  In addition, the notion of bi-free infinitely (additive) divisible distributions was developed in \cite{GHM2015} thereby discovering the bi-free Poisson distributions.

One important result in free probability is the connection between free probability theory and random matrix theory.  Indeed Voiculescu in \cite{V1991} connected these two theories by demonstrating that the distributions of certain matrices tend to the free central limit distributions, namely semicircular distributions, as the size of the matrices increase.  Furthermore, results pertaining to asymptotic freeness were demonstrated in \cite{V1991} and similar results for other matrix models were developed in \cites{S1995, S1997, D1993}.  The matrix models of \cites{V1991, S1995, S1997, D1993} have since been important tools in free probability.

The goal of this paper is to introduce a bi-free analogue of matrix models.  Such bi-matrix models have been elusive and the view taken in this paper is that a bi-matrix model is not simply a pair of matrices, but a pair of matrices with specific actions on matrices.  This approach of requiring actions on matrices may be slightly unsatisfactory, but the notion of random matrices acting on matrices has precedence (e.g. \cite{L2010}) and work in \cite{S2014}*{Section 6} provides evidence (beyond that presented here) for why this view is possibly the correct one.    Including this introduction, this paper contains seven sections which are summarized as follows.

The necessary background on bi-free probability is recalled in Section \ref{sec:background}.  Said background includes bi-non-crossing partitions, the bi-non-crossing M\"{o}bius function, $(\ell, r)$-cumulants, the universal moment polynomials for bi-freeness, and the abstract structures required for bi-freeness with amalgamation.  Complete expositions of these results may found in \cites{CNS2014-1, CNS2014-2}.

The structures and notion of a bi-matrix model are introduced in Section \ref{sec:structures}.  One should think of a bi-matrix model as a pair of matrices of operators that act on matrices of vectors (or operators) where the left matrix in the pair acts via left matrix multiplication whereas the right matrix in the pair acts via right matrix multiplication with a `twist'.  In addition to developing such notions, an essential lemma is presented which enables one to compute the joint distributions of bi-matrix models.

Bi-matrix models with commutative entries are examined in Section \ref{sec:commutative}.  Due to the commutation of the entries, bi-matrix models simplify slightly in this setting.  Many results analogous to those in free probability are obtained.  In particular, bi-matrix models with asymptotic distributions equal to certain bi-free central limit distributions from \cite{V2014} are demonstrated.  However, due to commutative, only bi-free central limit distributions where the left and right operators commute in distribution may be obtained.  These restriction produced by commutativity have already been seen in the theory of bi-free pairs of faces in that the operator model in \cite{MN2013} using left and right creation operators on a Fock space was insufficient to describe all bi-free distributions and a more complicated model was required (see \cite{CNS2014-1}). 

 However, commutativity is pleasant in bi-free probability as it enables the importation of results from free probability.   In fact, Theorem \ref{thm:ass-free-to-ass-bi-free} demonstrates that asymptotic bi-freeness for commutative bi-matrix models is not much more than asymptotic freeness of random matrices.   Consequently most (if not all) results relating random matrix theory to free probability should have an analogue in the bi-free setting.  Therefore we only exhibit specific results such as realizing bi-free Poisson distributions via random pairs of Wishart matrices and demonstrating the asymptotic bi-freeness from constant matrices of random pairs of Gaussian (or Haar) unitary matrices.

Some inadequacies from Section \ref{sec:commutative} are rectified in Section \ref{sec:fock}.  In particular, generalizing \cite{S1997}*{Theorem 5.2},  bi-matrix models where the left matrices consists of left creation and annihilation operators on a Fock space and the right matrices consists of right creation and annihilation operators have joint distributions are equal to the joint distribution of left and right creation and annihilation operators.  Using these models, all bi-free central limit distributions may be obtained.  Furthermore, such bi-matrix models are bi-free from the left and right actions of constant matrices.

Bi-matrix models involving $q$-deformed left and right creation and annihilation operators are examined in Section \ref{sec:q-deformed}.  In particular, asymptotic  bi-freeness from block matrices is obtain and the asymptotic limit of these bi-matrix models is again left and right creation and annihilation operators on a Fock space.  The results of this section generalize those of \cite{S1995} to the bi-free setting and provide an alternate approach to Theorem \ref{thm:ass-bi-free} by the $q = 1$ case.

Finally, in Section \ref{sec:boolean-monotone}, bi-matrix models with asymptotic limits equal to Boolean independent central limit distributions and monotonically independent central limit distributions are constructed.   These results require the use of products of left and right matrices to represent a single random variable in order to obtain the desired limit distributions.

\section{Background on Bi-Free Pairs of Faces}
\label{sec:background}

In this section, we will summarize some essential combinatorial aspects of bi-free probability from \cite{CNS2014-1} and the structures for operator-valued bi-free probability from \cite{CNS2014-2}.  We will only present the results essential to this paper and refer the reader to complete summarizes in \cite{CNS2014-2} and \cite{S2014} respectively.

\subsection*{Bi-Freeness for Pairs of Faces}
For the remainder of this paper, a map $\chi : \{1,\ldots, n\} \to \{\ell, r\}$ is used to designate whether the $k^{\mathrm{th}}$ operator in a sequence of $n$ operators should be a left operator (when $\chi(k) = \ell$) or a right operator (when $\chi(k) = r$).  Similarly, a map $\epsilon : \{1,\ldots, n\} \to K$ is used to determine which colour from a collection $K$ each operator is coloured.

The main difference between the combinatorial aspects of free and bi-free probability stem from handling the following permutation.
\begin{defn}
Given $\chi: \{1, \ldots, n\} \to \{\ell, r\}$, if 
\[
\chi^{-1}(\{\ell\}) = \{i_1<\cdots<i_p\} \qqand \chi^{-1}(\{r\}) = \{i_{p+1} > \cdots > i_n\},
\]
the permutation $s_\chi$ on $\{1,\ldots, n\}$ is defined by $s_\chi(k) = i_k$.
\end{defn}
Note the sequence $(s_\chi(1), \ldots, s_\chi(n))$ corresponds to, instead of reading $\{1,\ldots, n\}$ in the traditional order, reading $\chi^{-1}(\{\ell\})$ (that is, the left nodes) in increasing order followed by reading $\chi^{-1}(\{r\})$ (that is, the right nodes) in decreasing order.  

Let $\P (n)$ denote the set of partitions on $n$ elements.  Given a partition $\pi \in \P (n)$ and $x, y \in \{1,\ldots, n\}$, we write $x \sim_\pi y$ whenever $x$ and $y$ are in the same block of $\pi$ (a set in $\pi$) and $x \nsim_\pi y$ otherwise.  Given two partitions $\pi, \sigma \in \P (n)$, $\pi$ is said to be a refinement of $\sigma$, denoted $\pi \leq \sigma$, if every block of $\pi$ is contained in a single block of $\sigma$.  Refinement defines a partial ordering on $\P (n)$ turning $\P (n)$ into a lattice.

\begin{defn}
A partition $\pi \in \P (n)$ is said to be \emph{bi-non-crossing with respect to $\chi$} if the partition  $s_\chi^{-1}\cdot \pi$ (the partition formed by applying $s_\chi^{-1}$ to the blocks of $\pi$) is non-crossing.
Equivalently $\pi$ is bi-non-crossing if $\pi$ is non-crossing on $\{1,\ldots, n\}$ when the set is ordered via $s_\chi(1) \prec_\chi s_\chi(2) \prec_\chi \cdots \prec_\chi s_\chi(n)$.  The set of bi-non-crossing partitions with respect to $\chi$ is denoted by $BNC(\chi)$.
\end{defn}

\begin{exam}
If $\chi : \{1,\ldots, 6\} \to \{\ell, r\}$ is such that $\chi^{-1}(\{\ell\}) = \{2, 3, 6\}$ and $\chi^{-1}(\{r\}) = \{1, 4, 5\}$, then $(s_\chi(1), \ldots, s_\chi(6)) = (2, 3, 6, 5, 4, 1)$.  If $\pi = \{ \{1,4\}, \{2,5\}, \{3, 6\}\}$, then $\pi$ is crossing on $\{1,\ldots, 6\}$ yet is bi-non-crossing with respect to $\chi$.  This may be seen via the following diagrams.
\begin{align*}
\begin{tikzpicture}[baseline]
	\draw[thick, dashed] (-1,2.75) -- (-1,-.25) -- (1,-.25) -- (1,2.75);
	\draw[thick] (-1, 2) -- (0,2) -- (0,.5) -- (1, .5);
	\draw[thick] (-1, 1.5) -- (-.5,1.5) -- (-0.5,0) -- (-1, 0);
	\draw[thick] (1, 2.5) -- (0.5,2.5) -- (0.5,1) -- (1, 1);
	\node[right] at (1, 2.5) {$1$};
	\draw[black, fill=black] (1,2.5) circle (0.05);	
	\node[left] at (-1, 2) {$2$};
	\draw[black, fill=black] (-1,2) circle (0.05);
	\node[left] at (-1, 1.5) {$3$};
	\draw[black, fill=black] (-1,1.5) circle (0.05);
	\node[right] at (1, 1) {$4$};
	\draw[black, fill=black] (1,1) circle (0.05);
	\node[right] at (1, .5) {$5$};
	\draw[black, fill=black] (1,.5) circle (0.05);
	\node[left] at (-1, 0) {$6$};
	\draw[black, fill=black] (-1,0) circle (0.05);
\end{tikzpicture}
\longrightarrow
\begin{tikzpicture}[baseline]
	\draw[thick, dashed] (.75,0) -- (6.25, 0);
	\draw[thick] (2, 0) -- (2,.5) -- (3,.5) -- (3, 0);
	\draw[thick] (5, 0) -- (5,.5) -- (6,.5) -- (6, 0);
	\draw[thick] (1, 0) -- (1,1) -- (4,1) -- (4, 0);
	\node[below] at (1, 0) {$2$};
	\draw[black, fill=black] (1,0) circle (0.05);	
	\node[below] at (2, 0) {$3$};
	\draw[black, fill=black] (2,0) circle (0.05);	
	\node[below] at (3, 0) {$6$};
	\draw[black, fill=black] (3,0) circle (0.05);	
	\node[below] at (4, 0) {$5$};
	\draw[black, fill=black] (4,0) circle (0.05);	
	\node[below] at (5, 0) {$4$};
	\draw[black, fill=black] (5,0) circle (0.05);	
	\node[below] at (6, 0) {$1$};
	\draw[black, fill=black] (6,0) circle (0.05);	
\end{tikzpicture}
\end{align*}
\end{exam}

One function on pairs of elements of $BNC(\chi)$ required in this paper is the following.
\begin{defn}
\label{defn:mobius}
The \emph{bi-non-crossing M\"{o}bius function} is the function
\[
\mu_{BNC} : \bigcup_{n\geq1}\bigcup_{\chi : \{1, \ldots, n\}\to\{\ell, r\}}BNC(\chi)\times BNC(\chi) \to \bC
\]
defined such that $\mu_{BNC}(\pi, \sigma) = 0$ unless $\pi$ is a refinement of $\sigma$, and otherwise defined recursively via the formulae
\[
\sum_{\substack{\tau \in BNC(\chi) \\\pi \leq \tau \leq \sigma}} \mu_{BNC}(\tau, \sigma) = \sum_{\substack{\tau \in BNC(\chi) \\ \pi \leq \tau \leq \sigma}} \mu_{BNC}(\pi, \tau) = \left\{
\begin{array}{ll}
1 & \mbox{if } \pi = \sigma  \\
0 & \mbox{otherwise}
\end{array} \right. .
\]
\end{defn}

Due to the similarity in lattice structures, the bi-non-crossing M\"{o}bius function is related to the non-crossing M\"{o}bius function $\mu_{NC}$ by the formula 
\[
\mu_{BNC}(\pi, \sigma) = \mu_{NC}(s^{-1}_\chi \cdot \pi, s^{-1}_\chi \cdot \sigma).
\]
This implies that $\mu_{BNC}$ inherits many `multiplicative' properties that $\mu_{NC}$ has.  For more details, see \cite{CNS2014-1}*{Section 3}.

\begin{defn}
Let $(\A, \varphi)$ be a non-commutative probability space: that is, let $\A$ be a unital algebra and $\varphi : \A \to \mathbb{C}$ be unital and linear.  A \emph{pair of faces in $\A$} is a pair $(C, D)$ of unital subalgebras of $\A$.  We will call $C$ the \emph{left face} and $D$ the \emph{right face}.
\end{defn}

Given $Z_1, \ldots, Z_n \in \A$ and $\pi \in \P(n)$ with blocks $V_x = \{k_{1,x} < \cdots < k_{m_x, x}\}$ for $x \in \{1, \ldots, p\}$, define
\[
\varphi_\pi(Z_1, \ldots, Z_n) := \prod^p_{x=1} \varphi(Z_{k_{1, x}} \cdots Z_{k_{m_x, x}}).
\]
Given a map $\chi : \{1,\ldots, n\} \to \{\ell, r\}$ and $\pi \in BNC(\chi)$, define
\begin{align}
\kappa_\pi(Z_1, \ldots, Z_n) := \sum_{\substack{\sigma \in BNC(\chi) \\ \sigma \leq\pi}} \varphi_\sigma(Z_1, \ldots, Z_n) \mu_{BNC}(\sigma, \pi).  \label{eq:cum-in-terms-of-mom}
\end{align}
The \emph{$(\ell, r)$-cumulant of $Z_1, \ldots, Z_n$ corresponding to $\chi$} is $\kappa_\chi(Z_1, \ldots, Z_n) := \kappa_{1_\chi}(Z_1, \ldots, Z_n)$
where $1_\chi$ is the full partition $\{\{1,\ldots, n\}\}$.  Each $(\ell, r)$-cumulant should always be viewed as a function where only elements of left faces may be inserted into the $k^{\mathrm{th}}$ entry when $\chi(k) = \ell$ and only elements of right faces may be inserted into the $k^{\mathrm{th}}$ entry when $\chi(k) = r$.  Furthermore, if $\pi \in BNC(\chi)$ has blocks $V_x = \{k_{1,x} < \cdots < k_{m_x, x}\}$ for $x \in \{1, \ldots, p\}$, then one can verify 
\[
\kappa_\pi(Z_1, \ldots, Z_n) = \prod^p_{x=1} \kappa_{\pi|_{V_x}}(Z_{k_{1, x}}, \ldots,  Z_{k_{m_x, x}}).
\]
Furthermore, the reversion formula
\begin{align}
\varphi(Z_1\cdots Z_n) = \sum_{\pi \in BNC(\chi)} \kappa_\pi(Z_1, \ldots, Z_n) \label{eq:mom-in-terms-of-cum}
\end{align}
holds thereby giving each moment as a sum of products of $(\ell, r)$-cumulants.

The following is the main result from \cite{CNS2014-1}.  For $\epsilon : \{1,\ldots, n\} \to K$, note $\epsilon$ defines an element of $\P(n)$ whose blocks are $\{\epsilon^{-1}(\{k\})\}_{k \in K}$.
\begin{thm}[\cite{CNS2014-1}*{Theorem 4.3.1}]
\label{thm:universal-moment-bi-free}
Let $(\A, \varphi)$ be a non-commutative probability space and let $\{(C_k, D_k)\}_{k \in K}$ be a family of pairs of faces from $\A$.  Then $\{(C_k, D_k)\}_{k \in K}$ are bi-free with respect to $\varphi$ if and only if for all $\chi : \{1,\ldots, n\}\to\{\ell,r\}$, $\epsilon : \{1,\ldots, n\} \to K$, and 
\[
Z_m \in \left\{
\begin{array}{ll}
C_{\epsilon(m)} & \mbox{if } \chi(m) = \ell  \\
D_{\epsilon(m)} & \mbox{if } \chi(m) = r
\end{array} \right.,
\]
we have 
\begin{align}
\varphi(Z_1 \cdots Z_n) = \sum_{\pi \in BNC(\chi)} \left[ \sum_{\substack{\sigma \in BNC(\chi) \\ \pi \leq \sigma \leq \epsilon}} \mu_{BNC}(\pi, \sigma) \right]\varphi_\pi(Z_1,\ldots, Z_n). \label{eq:universal-polys}
\end{align}
Furthermore, it suffices to verify equation (\ref{eq:universal-polys}) for $Z_k$ in generating subsets of $C_{\epsilon(k)}$ and $D_{\epsilon(k)}$ and bi-freeness of $\{(C_k, D_k)\}_{k \in K}$ is equivalent to $\kappa_\chi(Z_1, \ldots, Z_n) = 0$ whenever $\chi$, $\epsilon$, and $Z_k$ are as above and $\epsilon$ is not constant.
\end{thm}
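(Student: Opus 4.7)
The plan is to establish the chain of equivalences: bi-freeness of $\{(C_k, D_k)\}_{k \in K}$ is equivalent to the vanishing of $\kappa_\chi(Z_1, \ldots, Z_n)$ whenever $\epsilon$ is non-constant, which in turn is equivalent to the universal moment formula \eqref{eq:universal-polys}. The reduction to verification on generating subsets then follows from multilinearity of both sides of \eqref{eq:universal-polys} in each variable $Z_m$.

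First, I would handle the purely combinatorial equivalence between \eqref{eq:universal-polys} and the vanishing of mixed cumulants via Möbius inversion on $BNC(\chi)$. Starting from the reversion formula \eqref{eq:mom-in-terms-of-cum} and the factorization $\kappa_\pi = \prod_x \kappa_{\pi|_{V_x}}$, the vanishing of mixed cumulants forces $\kappa_\pi(Z_1, \ldots, Z_n) = 0$ unless every block $V_x$ of $\pi$ lies inside a single $\epsilon$-class, that is, $\pi \leq \epsilon$. The moment therefore reduces to a sum over $\pi \leq \epsilon$; substituting \eqref{eq:cum-in-terms-of-mom} for each $\kappa_\pi$ and swapping the order of summation (relabeling the dummy variables) produces \eqref{eq:universal-polys} exactly. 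The converse is another Möbius inversion argument by induction on $n$: \eqref{eq:universal-polys} together with the already-known vanishing of mixed cumulants for fewer than $n$ variables pins down $\kappa_\chi$ with non-constant $\epsilon$ to zero.

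The substantive step is the equivalence between bi-freeness itself and the vanishing of mixed $(\ell, r)$-cumulants. Here I would invoke Voiculescu's definition of bi-freeness via free products of pointed Hilbert spaces: for each $k$ build $(\H_k, \xi_k)$ carrying commuting left and right actions of $C_k$ and $D_k$ reproducing the joint distribution of $(C_k, D_k)$ under the vacuum state, form the reduced free product $(\H, \xi) = \ast_k (\H_k, \xi_k)$, and lift to left/right actions on $(\H, \xi)$. One then computes $\varphi(Z_1 \cdots Z_n)$ directly in the free product by resolving how each operator acts on the Fock-type decomposition of $\H$, collecting contributions indexed by bi-non-crossing patterns. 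The combinatorial output, read through the shuffle $s_\chi$ (cf.\ the identity $\mu_{BNC}(\pi, \sigma) = \mu_{NC}(s_\chi^{-1} \cdot \pi, s_\chi^{-1} \cdot \sigma)$ recorded in the excerpt), must match \eqref{eq:universal-polys} exactly.

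The main obstacle is this explicit free-product moment computation: one has to track how left and right actions of distinct pairs $(C_k, D_k)$ interleave along the tensor decomposition of $\H$, and show that only patterns with $\epsilon$-monochromatic blocks contribute. This is where the geometry of $BNC(\chi)$ (as opposed to the usual $NC(n)$) emerges naturally and justifies the appearance of $s_\chi$; the bookkeeping is delicate precisely because $\chi$ may interleave left and right across different colours arbitrarily, while on each individual $(\H_k, \xi_k)$ the left and right actions are already internally consistent. Once this computation confirms \eqref{eq:universal-polys} for the canonical bi-free realization, the converse direction (vanishing implies bi-freeness) reduces to a uniqueness argument: \eqref{eq:universal-polys} expresses every joint moment purely in terms of the marginal data of each $(C_k, D_k)$, so any family satisfying it has the same joint distribution as the canonical free-product realization and is therefore bi-free.
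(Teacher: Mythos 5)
This theorem is not proved in the paper at all: it is imported verbatim as \cite{CNS2014-1}*{Theorem 4.3.1} and stated without argument, so there is no in-paper proof to compare against. Your sketch is, however, a reasonable high-level account of what goes on in \cite{CNS2014-1}: the purely combinatorial equivalence between \eqref{eq:universal-polys} and the vanishing of mixed $(\ell,r)$-cumulants is exactly the M\"obius-inversion / relabel-the-dummy-summation argument you describe, and the reduction to generating sets is indeed just multilinearity.

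Two caveats on the substantive step. First, you compress the entire content of the theorem into ``compute $\varphi(Z_1\cdots Z_n)$ directly in the free product\ldots the combinatorial output\ldots must match \eqref{eq:universal-polys} exactly.'' That is where all the work is: Voiculescu's \cite{V2014} only establishes that joint moments of bi-free families are given by \emph{some} universal polynomials in the marginal moments; \cite{CNS2014-1} (following the conjecture of \cite{MN2013}) then proves a genuine combinatorial identity to show those universal polynomials are the bi-non-crossing ones of \eqref{eq:universal-polys}. Your sketch acknowledges the difficulty but does not actually produce the identity, so as written it is an outline of a proof rather than a proof. Second, the converse direction in \cite{CNS2014-1} does not require a separate ``uniqueness'' argument beyond what Voiculescu already supplies: once one knows that bi-free families satisfy \emph{some} universal polynomial formula depending only on the marginals, any family satisfying \eqref{eq:universal-polys} has the same joint distribution as a bi-free realization with the same marginals, hence is bi-free by definition; you state this correctly but it is worth being explicit that the uniqueness being invoked is exactly the universality Voiculescu proved, not something additional.
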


Important to this paper are the bi-free central limit distributions of \cite{V2014}*{Section 7}.
\begin{defn}
Let $(\A,\varphi)$ be a non-commutative probability space and let $I$ and $J$ be disjoint index sets. A two-faced family $(\{Z_i\}_{i \in I}, \{Z_j\}_{j \in J})$ is said to be a \emph{(centred) bi-free central limit distribution} if  all $(\ell, r)$-cumulants of order 1 and of order at least 3 are zero.
\end{defn}
Note \cite{V2014}*{Theorem 7.4} completely describes all possible (centred) bi-free central limit distribution via the (complex) numbers $\varphi(Z_{k_1} Z_{k_2}) = \kappa_\chi(Z_{k_1}, Z_{k_2})$ for $k_1, k_2 \in I \sqcup J$, and concretely represents such two-faced families as combinations of left and right creation and annihilation operators on Fock spaces.   The matrix $C$ indexed by $I \sqcup J$ obtained with the $(k_1,k_2)$ entry equal to $\varphi(Z_{k_1} Z_{k_2})$ is called \emph{the covariance matrix of the family $(\{Z_i\}_{i \in I}, \{Z_j\}_{j \in J})$}.

\subsection*{Structures for Operator-Valued Bi-Freeness}

With \cite{CNS2014-2} things just got stranger as there are two copies of the amalgamation algebra and the natural structures for operator-valued bi-free probability appear, on the surface, very different from from those for operator-valued free probability.  We describe these structures here.  For the remainder of this section $B$ will denote a unital algebra.  One may replace $B$ with the $N \times N$ matrix algebra $\M_N(\bC)$ as this is all that is required in other sections of this paper.
\begin{defn}
A \emph{$B$-non-commutative probability space} is a pair $(\A, \Phi)$ where $\A$ is a unital algebra containing $B$ (with $1_\A= 1_B$) and $\Phi : \A \to B$ is a unital linear map such that
\[
\Phi(b_1 Z b_2) = b_1 \Phi(Z) b_2
\]
for all $b_1, b_2 \in B$ and $Z \in \A$.
\end{defn}

\begin{defn}
A \emph{$B$-$B$-non-commutative probability space} is a triple $(\A, E_\A, \varepsilon)$ where $\A$ is a unital algebra, $\varepsilon : B \otimes B^{\op} \to \A$ is a unital homomorphism such that $\varepsilon|_{B \otimes 1_B}$ and $\varepsilon|_{1_B \otimes B^{\op}}$ are injective, and $E_\A : \A \to B$ is a linear map such that
\[
E_{\A}(\varepsilon(b_1 \otimes b_2)Z) = b_1 E_{\A}(Z) b_2
\qqand
E_{\A}(Z\varepsilon(b \otimes 1_B)) = E_{\A}(Z\varepsilon(1_B \otimes b))
\]
for all $b_1, b_2, b \in B$ and $Z \in \A$.  To simplify notation, $L_b$ and $R_b$ are used in place of $\varepsilon(b \otimes 1_B)$ and $\varepsilon(1_B \otimes b)$ respectively. 

The unital subalgebras of $\A$ defined by
\begin{align*}
\A_\ell &:= \{ Z \in \A  \, \mid \, Z R_b = R_b Z \mbox{ for all }b \in B\} \text{ and}\\
\A_r &:= \{ Z \in \A  \, \mid \, Z L_b = L_b Z \mbox{ for all }b \in B\}
\end{align*}
are called the \emph{left} and \emph{right algebras of} $\A$ respectively.  
\end{defn}

Given a $B$-$B$-non-commutative probability space $(\A, E_\A, \varepsilon)$, notice $(\A_\ell, E_\A)$ is always a $B$-non-commutative probability space with $\varepsilon(B \otimes 1_B)$ as the copy of $B$ and $(\A_r, E_\A)$ is a $B^{\op}$-non-commutative probability space with $\varepsilon(1_B \otimes B^{\op})$ as the copy of $B^{\op}$.  Furthermore $(\A, E_\A, \varepsilon)$ is simply a non-commutative probability space whenever $B = \bC$.

The reason $B$-$B$-non-commutative probability spaces are the correct setting of operator-valued bi-free probability can be seen through the following.

\begin{defn}
A \emph{$B$-$B$-bimodule with a specified $B$-vector state} is a triple $(\X, \mathring{\X}, p_\X)$ where $\X$ is a direct sum of $B$-$B$-bimodules
\[
\X = B \oplus \mathring{\X},
\]
and $p_\X : \X \to B$ is the linear map
\[
p_\X(b \oplus \eta) = b.
\]

Let $\L(\X)$ denote the set of linear operators on $\X$.  For each $b \in B$ define the operators $L_b, R_b \in \L(\X)$ by
\[
L_b(\eta) = b \cdot \eta \qquad \mbox{ and } \qquad R_b(\eta) = \eta \cdot b \qquad \text{ for all } \eta \in \X.
\]
The unital subalgebras of $\L(\X)$ defined by
\begin{align*}
\L_\ell(\X) &:= \{ Z \in \L(\X) \, \mid \, ZR_b = R_b Z \mbox{ for all }b \in B\} \text{ and}\\
\L_r(\X) &:= \{ Z \in \L(\X) \, \mid \, ZL_b = L_b Z \mbox{ for all }b \in B\}
\end{align*}
are called the \emph{left} and \emph{right algebras of} $\L(\X)$ respectively.

Given a $B$-$B$-bimodule with a specified $B$-vector state $(\X, \mathring{\X}, p)$, the \emph{expectation of $\L(\X)$ onto $B$} is the linear map $E_{\L(\X)} : \L(\X) \to B$ defined by
\[
E_{\L(\X)}(Z) = p_\X(Z(1_B))
\]
for all $Z \in \L(\X)$.  
\end{defn}
Notice if $\varepsilon : B \otimes B^{\op} \to \L(\X)$ is defined by $\varepsilon(b_1 \otimes b_2) = L_{b_1} R_{b_2}$, then $(\L(\X), E_{\L(\X)}, \varepsilon)$ is a concrete $B$-$B$-non-commutative probability space.  Moreover \cite{CNS2014-2}*{Theorem 3.2.4} demonstrated every abstract $B$-$B$-non-commutative probability space can be represented inside a concrete $B$-$B$-non-commutative probability space, at least for the purposes of computing distributions.

\section{General Structure for Bi-Matrix Models}
\label{sec:structures}

The general structures required for bi-matrix models are introduced in this section.  These structures are motivated by the main result of \cite{S2014}*{Section 6} which is stated below.  The symbols $\{E_{i,j}(N)\}^N_{i,j=1}$ will denote the standard matrix units for $\M_N(\bC)$.  Either $[a_{i,j}]$ or $\sum^N_{i,j=1} a_{i,j} \otimes E_{i,j}(N)$ is used to denote the $N \times N$ matrix with $(i,j)^{\mathrm{th}}$ entry $a_{i,j}$ depending on which better fits the current context.  Furthermore $\tr$ will denote the trace on $\M_N(\bC)$ defined by
\[
\tr([a_{i,j}]) = \sum^N_{i=1} a_{i,i}.
\]

The following introduces the general structure for bi-matrix models for matrices with elements in $\L(\X)$ where $\X$ is a pointed vector space.
\begin{cons}
\label{cons:twisting-action}
Let $(\X, \mathring{\X}, \xi, p_\X)$ be a pointed vector space; that is, $\X$ is a vector space over $\bC$ with $\X = \bC \xi \oplus \mathring{\X}$ and $p_\X : \X \to \bC$ is the linear map defined by 
\[
p_\X(\lambda \xi \oplus \eta) = \lambda.
\]

For $N \in \bN$ consider the $\M_N(\bC)$-$\M_N(\bC)$-bimodule $\X_N := \M_N(\X)$ where
\[
[a_{i,j}] \cdot [\eta_{i,j}] = \left[  \sum^N_{k=1} a_{i,k} \eta_{k,j}\right] \qqand [\eta_{i,j}] \cdot [a_{i,j}] = \left[  \sum^N_{k=1} a_{k,j} \eta_{i,k}\right]
\]
for all $[a_{i,j}] \in \M_N(\bC)$ and $[\eta_{i,j}] \in \X_N$.  Then $\X_N$ becomes an $\M_N(\bC)$-$\M_N(\bC)$-bimodule with specified $\M_N(\bC)$-vector state via
\[
\X_N = \M_N(\bC \xi) \oplus \M_N(\mathring{\X}),
\]
and the linear map $p_{\X_N} : \X_N \to \M_N(\bC)$ defined by
\[
p_{\X_N}([\eta_{i,j}]) = [p_\X(\eta_{i,j})].
\]
We call $\X_N$ \emph{the $\M_N(\bC)$-$\M_N(\bC)$-bimodule associated with $(\X, p_\X)$} and $(\L(\X_N), E_{\L(\X_N)})$ \emph{the $\M_N(\bC)$-$\M_N(\bC)$-non-commutative probability space associated with $(\X, p_\X)$}.  

Since $\X$ is a pointed vector space, there is a canonical choice of linear functional $\varphi_\X : \L(\X) \to \bC$ defined by 
\[
\varphi_\X(Z) = p_\X( Z(\xi)).
\]
Therefore, the expectation $E_{\L(\X_N)} : \L(\X_N) \to \M_N(\bC)$ is defined by
\[
E_{\L(\X_N)}(Z) = p_{\X_N}(Z I_{N,\xi})
\]
where $I_{N,\xi}$ is the diagonal matrix $\diag(\xi, \xi, \ldots, \xi)$.

To consider bi-matrix models, two homomorphisms from $\M_N(\L(\X))$ into $\L(\X_N)$ are required.   Define $L : \M_N(\L(\X)) \to \L(\X_N)$ by
\[
L([T_{i,j}]) [\eta_{i,j}] = \left[ \sum^N_{k=1} T_{i,k}(\eta_{k,j})\right]
\]
for all $[\eta_{i,j}] \in \X_N$ and $[T_{i,j}] \in \M_N(\L(\X))$.  Is is elementary to verify that $L$ is a homomorphism, that $L(\M_N(\L(\X))) \subseteq \L_\ell(\X_N)$, and that $L([a_{i,j} I_\X]) = L_{[a_{i,j}]}$.  On the other hand (or perhaps, on the other face), define $R : \M_N(\L(\X)^{\op})^{\op} \to \L(\X_N)$ by
\[
R([S_{i,j}]) [\eta_{i,j}] = \left[ \sum^N_{k=1} S_{k,j} (\eta_{i,k})\right]
\]
for all $[\eta_{i,j}] \in \X_N$ and $[S_{i,j}] \in \M_N(\L(\X))$.  Is is elementary to verify that $R$ is a homomorphism (hence the ops), that $R(\M_N(\L(\X)^\op)^\op) \subseteq \L_r(\X_N)$, and that $R([a_{i,j} I_\X]) = R_{[a_{i,j}]}$.  We use $L$ and $R$ instead of $L_N$ and $R_N$ as the size of the input matrices determine $N$.  The image of $L$ (respectively $R$) is called \emph{the left (respectively right) matrix algebras of $\L(\X)$} and elements of this algebra are called \emph{left (respectively right) matrices of $\L(\X)$}.

\end{cons}
\begin{rem}
\label{rem:commutative}
If $[T_{i,j}], [S_{i,j}] \in \M_N(\L(\X))$ are such that $T_{i,j} S_{k,m} = S_{k,m} T_{i,j}$ for all $i,j,k,m$, then it is elementary to verify that  $L([T_{i,j}]) R([S_{i,j}]) = R([S_{i,j}]) L([T_{i,j}])$.
\end{rem}

\begin{rem}
\label{rem:transitioning-from-left-to-right}
It is worth mentioning that for all $[Z_{i,j}] \in \M_N(\L(\X))$,
\[
L([Z_{i,j}]) I_{N, \xi} = R([Z_{i,j}]) I_{N, \xi}.
\]
This fact will be important later in the paper in order to `transition from right operators to left operators'; a theme that has been essential in bi-free probability (see \cites{CNS2014-1, CNS2014-2, GHM2015}).

\begin{rem}
\label{rem:distributions-of-matrices-depend-only-on-original-state}
Using the notation as in Construction \ref{cons:twisting-action}, if $[Z_{i,j}] \in \M_N(\L(\X))$ then
\[
E_{\L(\X_N)}(L([Z_{i,j}])) = p_{\X_N}(L([Z_{i,j}])I_{N,\xi}) = p_{\X_N}([Z_{i,j} (\xi)]) = [p_\X(Z_{i,j}(\xi))] = [\varphi_\X(Z_{i,j})] = E_{\L(\X_N)}(R([Z_{i,j}])).
\]
As similar computations hold for any product of left and right matrices of $\L(\X)$, we see that $E_{\L(\X_N)}$ applied to a product of left and right matrices of $\L(\X)$ depends only on the sequence of left and right matrices, the entries in the left and right matrices, and the linear map $\varphi_\X : \L(\X) \to \bC$.  In particular, for such products, $E_{\L(\X_N)}$ is the expected expectation obtained by applying $\varphi_\X$ to each entry of an element of $\M_N(\L(\X))$.
\end{rem}

\end{rem}
\begin{rem}
\label{rem:non-commutative-probability-space-canonical-action}
Given a non-commutative probability space $(\A, \varphi)$, there are many ways to view $\A \subseteq \L(\X)$ in a state-preserving way.  In particular, we adopt the convention that $\X = \A$ with $p_\X(A) = \varphi(A)$ (so $\X = \bC \oplus \ker(\varphi)$) and $\A \subseteq \L(\X)$ via $T(A) = TA$ for all $A \in \X$ and $T \in \A \subseteq \L(\X)$.  In particular, this convention implies 
\[
\varphi_\X(T) = p_\X(T (I_\A)) = p_\X(T) = \varphi(T).
\]
Under this convention and using the notation of Construction \ref{cons:twisting-action}, we see for $N \in \bN$ that $\X_N = \M_N(\A)$ and
\[
L([Z_{i,j}]) [A_{i,j}] = \left[ \sum^N_{k=1} Z_{i,k}A_{k,j}\right] \qand R([Z_{i,j}]) [A_{i,j}] = \left[ \sum^N_{k=1} Z_{k,j} A_{i,k}\right]
\]
for all $[Z_{i,j}], [A_{i,j}] \in \M_N(\A)$.  This computation shows why one might call $R$ a `twisted'-right action as it looks like multiplying the matrices $[A_{i,j}][Z_{i,j}]$ in $\M_N(\A)$ except that the opposite multiplication on $\A$ is used in each matrix entry.

Using the same idea as in Remark \ref{rem:distributions-of-matrices-depend-only-on-original-state}, one sees that the joint distribution of $L(\M_N(\A))$ and $R(\M_N(\A^{\op})^{\op})$ depends only on the sequence of left and right matrices, the entries in the left and right matrices, and the linear map $\varphi : \A \to \bC$. 

It is important to emphasize one does not want the operators $Z_{i,j}$ in $R([Z_{i,j}])$ to be elements of $\A$ acting `on the right' of $\A$; that is, via the action $S(A) = AS$.  One reason for this, other than Theorem \ref{thm:bi-free-with-amalgamation-over-matrix-algebra}, is that $L([T_{i,j}])$ and $R([S_{i,j}])$ would commute by Remark \ref{rem:commutative} and too much commutativity in bi-free probability is not optimal.  
\end{rem}

Previous evidence towards why Construction \ref{cons:twisting-action} is the correct mathematical construction to use is the following result which demonstrates how bi-freeness of pairs or faces extends to bi-freeness of matrices of the pairs of faces.
\begin{thm}[specific case of \cite{S2014}*{Theorem 6.3.1}]
\label{thm:bi-free-with-amalgamation-over-matrix-algebra}
Let $(\A, \varphi)$ be a non-commutative probability space and let $\{(C_k, D_k)\}_{k \in K}$ be bi-free pairs of faces with respect to $\varphi$.  For each $N \in \bN$ let $(\L(\X_N), E_{\L(\X_N)})$ be the canonical $\M_N(\bC)$-$\M_N(\bC)$-non-commutative probability space associated with $\A$ as in Remark \ref{rem:non-commutative-probability-space-canonical-action}.  Then $\{(L(\M_N(C_k)), R(\M_N(D_k^{\op})^\op))\}_{k \in K}$ are bi-free with amalgamation over $\M_N(\bC)$ with respect to $E_{\L(\X_N)}$.
\end{thm}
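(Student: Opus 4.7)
My plan is to verify the operator-valued analog of Theorem \ref{thm:universal-moment-bi-free} (as developed in \cite{CNS2014-2}) for the family $\{(L(\M_N(C_k)), R(\M_N(D_k^{\op})^\op))\}_{k \in K}$ with respect to $(\L(\X_N), E_{\L(\X_N)}, \varepsilon)$. By multilinearity of both sides of the universal moment polynomial identity, it suffices to test on the natural generators: namely, words $Z_1 \cdots Z_n$ where, for $\chi$ and $\epsilon$ given, each $Z_m$ is either $L(c_m E_{i_m, j_m}(N))$ with $c_m \in C_{\epsilon(m)}$ when $\chi(m) = \ell$, or $R(d_m E_{i_m, j_m}(N))$ with $d_m \in D_{\epsilon(m)}$ when $\chi(m) = r$. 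These span $L(\M_N(C_k))$ and $R(\M_N(D_k^{\op})^\op)$ as $\M_N(\bC) \otimes \M_N(\bC)^{\op}$-bimodules, and are exactly the inputs for which the operator-valued moment expansion is most transparent.

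The core computation is to evaluate $E_{\L(\X_N)}(Z_1 \cdots Z_n)$ using Remark \ref{rem:distributions-of-matrices-depend-only-on-original-state}. Tracking matrix indices carefully: the left action $L$ contracts adjacent indices in the natural order, while the twisted right action $R$ processes right-coloured indices in the reverse order dictated by $s_\chi$. Summing out the internal indices, each $(p,q)$-entry of the resulting matrix in $\M_N(\bC)$ is a sum over surviving index configurations of terms of the form $\varphi(W_{s_\chi(1)} \cdots W_{s_\chi(n)}) \cdot \Delta$, where $W_m \in \{c_m, d_m\}$ is the scalar entry of $Z_m$ and $\Delta$ is a product of Kronecker deltas encoding the matrix-index constraints; the appearance of the permutation $s_\chi$ is precisely what the twist in the right action realizes at the level of matrix multiplication. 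Invoking the scalar bi-freeness of $\{(C_k, D_k)\}_{k \in K}$ via Theorem \ref{thm:universal-moment-bi-free}, each such scalar moment expands over $\pi \in BNC(\chi)$ by the universal polynomial formula. Substituting and regrouping by $\pi$, each $(p,q)$-entry reassembles into the $(p,q)$-entry of the corresponding operator-valued universal polynomial: the $\M_N(\bC)$-valued moment functional $(E_{\L(\X_N)})_\pi(Z_1, \ldots, Z_n)$ attached to each bi-non-crossing $\pi$ equals the scalar $\varphi_\pi(W_1, \ldots, W_n)$ multiplied by a product of matrix units determined by the block structure of $\pi$.

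The main obstacle is the combinatorial verification that the index contractions produced by the twisted right action realize the bi-non-crossing block structure rather than a merely non-crossing one. Were the right action untwisted, then $L$ and $R$ would commute by Remark \ref{rem:commutative}, the matrix indices would be contracted in the wrong order, and the model would witness only amalgamated classical freeness rather than bi-freeness. The argument hinges on checking that with the twist in place, the matrix-unit products realized along each block of $\pi \in BNC(\chi)$ match exactly what is required by the operator-valued bi-free moment formula of \cite{CNS2014-2}; the $s_\chi$ reindexing observed in the entry-wise calculation is the mechanism by which this matching occurs. Once this bookkeeping is made precise, the rest of the proof is a routine regrouping that combines scalar bi-freeness with the entry-wise expansion above.
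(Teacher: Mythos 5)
The paper does not give a proof of this statement: it is cited as a specific case of Theorem 6.3.1 of \cite{S2014}, and the only ingredient from that proof reproduced here is Lemma \ref{lem:expanding-matrices}.  So your proposal cannot be matched against a proof written in this paper; what I can do is check it against the fragment the paper does supply, and against the structure of the claim itself.

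Your overall strategy --- expand $E_{\L(\X_N)}(Z_1\cdots Z_n)$ entrywise, read off a scalar moment times a product of matrix units, invoke scalar bi-freeness of $\{(C_k,D_k)\}_k$ to expand the scalar moment over $BNC(\chi)$, and regroup to obtain the operator-valued universal moment formula --- is the natural route and is consistent with the role the paper assigns to Lemma \ref{lem:expanding-matrices}.  However, there is a concrete error in your core computation.  You assert that each entry involves $\varphi\bigl(W_{s_\chi(1)}\cdots W_{s_\chi(n)}\bigr)$, i.e.\ that the scalar entries are read in the $s_\chi$-permuted order, with the Kronecker deltas encoding the index constraints.  Lemma \ref{lem:expanding-matrices} says the opposite: the scalar operators compose in the \emph{natural} order $Z_{i_1,j_1;1}\circ\cdots\circ Z_{i_n,j_n;n}$ (hence, under the convention of Remark \ref{rem:non-commutative-probability-space-canonical-action}, one gets $\varphi(W_1\cdots W_n)$), while it is the product of matrix units $E_{i_{s_\chi(1)},j_{s_\chi(1)}}(N)\cdots E_{i_{s_\chi(n)},j_{s_\chi(n)}}(N)$ that carries the $s_\chi$ reordering.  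Since $\A$ is not assumed commutative or tracial, $\varphi(W_1\cdots W_n)$ and $\varphi(W_{s_\chi(1)}\cdots W_{s_\chi(n)})$ are genuinely different objects, and scalar bi-freeness (Theorem \ref{thm:universal-moment-bi-free}) applies to the former with the given $\chi,\epsilon$; applying it to the latter would require re-sorting $\chi$ and $\epsilon$ through $s_\chi$ and would produce a mismatched combinatorial sum.  Your intuition that ``the twist in the right action realizes $s_\chi$ at the level of matrix multiplication'' is correct, but that reordering lives on the $\M_N(\bC)$ side, not the $\A$ side, and the whole point of the subsequent bookkeeping is to show that the $s_\chi$-reordered matrix-unit factors reproduce exactly the bi-non-crossing nesting that the operator-valued moment function of \cite{CNS2014-2} demands.

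Beyond that sign error, the step you label ``routine regrouping'' is in fact the substance of the proof.  The operator-valued bi-free moment function $E_\pi$ of \cite{CNS2014-2} is not a block-by-block product of scalars times matrix units; it is defined by a nested, bi-multiplicative recursion respecting the $B$-$B$-bimodule structure, and showing that the sum over index configurations compatible with a fixed $\pi\in BNC(\chi)$ reassembles precisely into that nested expression is where the work lies.  Acknowledging this as ``the main obstacle'' and then declaring the remainder ``routine'' leaves the actual argument undone.  So: right skeleton, one genuine error in where $s_\chi$ acts, and the essential combinatorial verification is asserted rather than carried out.
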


A piece of the proof of Theorem \ref{thm:bi-free-with-amalgamation-over-matrix-algebra} essential to this paper is the following computation.
\begin{lem}
\label{lem:expanding-matrices}
Using the notation and conventions of Construction \ref{cons:twisting-action}, let $\chi : \{1, \ldots, n\} \to \{\ell, r\}$, and let $Z_k = L([Z_{i,j; k}])$ if $\chi(k) = \ell$ and $Z_k = R([Z_{i,j; k}])$ if $\chi(k) = r$.  Then
\[
(Z_1 \cdots Z_n)(I_{N, \xi}) = \sum^N_{\substack{i_1, \ldots, i_n = 1 \\ j_1, \ldots, j_n = 1}} ((Z_{i_1, j_1; 1} \circ \cdots\circ Z_{i_n, j_n; n})(\xi))  \otimes E_\chi((i_1, \ldots, i_n), (j_1, \ldots, j_n); N)
\]
where
\[
E_\chi((i_1, \ldots, i_n), (j_1, \ldots, j_n); N) = E_{i_{s_\chi(1)}, j_{s_\chi(1)}}(N) \cdots E_{i_{s_\chi(n)}, j_{s_\chi(n)}}(N) \in \M_N(\bC).
\]
\end{lem}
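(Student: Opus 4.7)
The plan is to prove the identity by induction on $n$. The base case $n=1$ is immediate: expanding $I_{N,\xi} = \sum_k \xi \otimes E_{k,k}(N)$ and applying either $L([Z_{i,j;1}])$ or $R([Z_{i,j;1}])$ directly from the definitions in Construction~\ref{cons:twisting-action} yields $\sum_{i_1,j_1} Z_{i_1,j_1;1}(\xi) \otimes E_{i_1,j_1}(N)$ in both cases, and the permutation $s_\chi$ is trivial when $n=1$.

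For the inductive step, let $\tilde\chi : \{1,\ldots,n-1\} \to \{\ell, r\}$ denote the shifted restriction $\tilde\chi(k) := \chi(k+1)$. Apply the induction hypothesis to rewrite $(Z_2\cdots Z_n)(I_{N,\xi})$ as a sum over tuples $(i_2, \ldots, i_n)$ and $(j_2, \ldots, j_n)$ of elementary tensors $(Z_{i_2, j_2; 2} \circ \cdots \circ Z_{i_n, j_n; n})(\xi) \otimes E_{\tilde\chi}((i_2,\ldots,i_n),(j_2,\ldots,j_n); N)$, and then apply $Z_1$. The key observation is that applying $L([T_{i,j}])$ to an elementary tensor $\eta \otimes E_{a,b}(N)$ yields $\sum_c T_{c,a}(\eta) \otimes E_{c,b}(N)$, which is equivalent to multiplying the existing matrix unit on the \emph{left} by a new matrix unit while composing the new operator on the \emph{outside} of the existing product; similarly, applying $R([S_{i,j}])$ produces $\sum_d S_{b,d}(\eta) \otimes E_{a,d}(N)$, so the new matrix unit multiplies on the \emph{right} and the new operator again composes on the outside. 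These rules match the claimed formula because the composition $Z_{i_1,j_1;1} \circ \cdots \circ Z_{i_n,j_n;n}$ places $Z_1$ outermost irrespective of its colour.

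It then remains to reconcile the position of the new matrix unit $E_{i_1,j_1}(N)$ with the permutation $s_\chi$. The case analysis is straightforward: if $\chi(1)=\ell$, then $1$ is the smallest left node of $\chi$, so $s_\chi(1)=1$ and $s_\chi(k)= s_{\tilde\chi}(k-1)+1$ for $k\geq 2$, forcing $E_\chi(\ldots) = E_{i_1,j_1}(N)\cdot E_{\tilde\chi}(\ldots)$, exactly what left multiplication of matrix units produces; if $\chi(1)=r$, then $1$ is the smallest right node and is read last among the right nodes, so $s_\chi(n)=1$ and $s_\chi(k)=s_{\tilde\chi}(k)+1$ for $k\leq n-1$, giving $E_\chi(\ldots) = E_{\tilde\chi}(\ldots)\cdot E_{i_1,j_1}(N)$, exactly what right multiplication produces.

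The main obstacle is purely notational: correctly tracking how $s_\chi$ extends $s_{\tilde\chi}$ when an extra node is prepended to the index set. Once this bookkeeping is set up, both the `composition on the outside' of the vector-space operators and the direction in which the new matrix unit multiplies are forced by whether $\chi(1)$ equals $\ell$ or $r$, and the induction closes.
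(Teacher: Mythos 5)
Your proof is correct and takes essentially the same approach as the paper: both reduce to tracking, operator by operator, how each new matrix unit $E_{i_k,j_k}(N)$ attaches to the existing product (on the left when $\chi(k)=\ell$, on the right when $\chi(k)=r$) while the corresponding entry composes on the outside of the operator acting on $\xi$. The paper phrases this as an iteration peeling off $Z_n, Z_{n-1}, \ldots$; you phrase it as a formal induction peeling off $Z_1$ from the front, with the bookkeeping identities $s_\chi(k)=s_{\tilde\chi}(k-1)+1$ (when $\chi(1)=\ell$) and $s_\chi(k)=s_{\tilde\chi}(k)+1$, $s_\chi(n)=1$ (when $\chi(1)=r$) made explicit — a slightly tidier but substantively identical argument.
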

\begin{proof}
By the linearity of all maps involved, it suffices to consider $Z_k = L(Z_{i_k, j_k} \otimes E_{i_k, j_k}(N))$ when $\chi(k) = \ell$ and $Z_k = R(Z_{i_k, j_k} \otimes E_{i_k, j_k}(N))$ when $\chi(k) = r$.  Note
\[
Z_n(I_{N, \xi}) = Z_{i_n, j_n} (\xi) \otimes E_{i_n, j_n}(N)
\]
regardless of the value of $\chi(n)$.  

If $\chi(n-1) = \ell$, observe that
\[
Z_{n-1} Z_n(I_{N, \xi}) = (Z_{i_{n-1}, j_{n-1}} Z_{i_n, j_n} (\xi)) \otimes E_{i_{n-1}, j_{n-1}}(N)E_{i_n, j_n}(N)
\]
whereas, if $\chi(n-1) = r$, observe that
\[
Z_{n-1} Z_n(I_{N, \xi}) = (Z_{i_{n-1}, j_{n-1}} Z_{i_n, j_n} (\xi)) \otimes E_{i_n, j_n}(N)E_{i_{n-1}, j_{n-1}}(N).
\]
In particular, this pattern repeats where $E_{i_k, j_k}(N)$ is placed on the left-hand side of the product of matrix units if $\chi(k) = \ell$ whereas $E_{i_k, j_k}(N)$ is placed on the right-hand side of the product of matrix units if $\chi(k) = r$.  Hence the result follows by the definition of $s_\chi$.
\end{proof}
\begin{rem}
\label{rem:indices-that-matter}
For $E_\chi((i_1, \ldots, i_n), (j_1, \ldots, j_n); N)$ to be non-zero and to be a diagonal entry (thereby contributing to the trace), it is required that $j_{s_\chi(k)} = i_{s_\chi(k+1)}$ for all $k$ (where $n+1 \to 1$).  This implies $j_k = j_{s_\chi(s^{-1}_\chi(k))} = i_{s_\chi(s^{-1}_\chi(k) + 1)}$ for all $k$.  
\end{rem}

\section{The Commutative Case}
\label{sec:commutative}

In this section, we will study bi-matrix models in the commutative case to obtain (commutative in distribution) bi-free central limit distributions.  In particular, for this section we take $\A = (L_\infty(\Omega, \mu), E)$ for our non-commutative (well, commutative) probability space where
\[
E(f) = \int_\Omega f(x) \, d\mu.
\]

Although some results in this section may be obtained via the bi-matrix models with bosonic creation and annihilation in Section \ref{sec:q-deformed} (i.e. with $q = 1$), the ideas in this section are not only simpler but important for demonstrating that many random matrix models have direct analogues in the bi-free setting.  In particular, this section can be summarized as, ``there is a bi-free analogue of any random matrix model that can be generalized from random matrices with independent entries to pairs of random matrices where each pair is independent from each other pair and each pair has a certain covariance matrix associated to it."

We begin by recalling some definitions.
\begin{defn}
\label{defn:Gaussian}
A family $X_1, \ldots, X_n$ of self-adjoint random variables in $(L_\infty(\Omega, \mu), E)$ is a \emph{(centred) Gaussian family} if there exists a non-singular positive $n\times n$ matrix $C$ with real entries (called the \emph{covariance matrix}) such that for all $k \in \bN$ and all $1 \leq i_1, \ldots, i_k \leq n$,
\[
E(X_{i_1} \cdots X_{i_k}) = \frac{1}{\sqrt{(2\pi)^n \det(C)}} \int_{\bR^n} x_{i_1} \cdots x_{i_n} e^{-\frac{1}{2} \langle C^{-1} \vec{x}, \vec{x}\rangle} \, dx_1 \cdots dx_n
\]
where $\vec{x} = (x_1, \ldots, x_n)$ and $\langle \, \cdot, \cdot \, \rangle$ denotes the standard inner product on $\bR^n$.

A family of complex random variables $X_1, \ldots, X_n$ in a $*$-probability space $(L_\infty(\Omega, \mu), E)$ is a \emph{complex Gaussian family} if $\Re(X_1), \ldots, \Re(X_n), \Im(X_1), \ldots, \Im(X_n)$ is a Gaussian family.
\end{defn}

Of important use in this section is the following formula.
\begin{thm}[Wick's Formula; see \cite{NS2006}*{Theorem 22.3} for example]
\label{thm:Wick}
Let $X_1, \ldots, X_n$ be a Gaussian family in $(L_\infty(\Omega, \mu), E)$ with covariance matrix $C = [c_{i,j}]$.  For all $k \in \bN$ and $1 \leq k_1, \ldots, k_n \leq n$, 
\[
E(X_{k_1} \cdots X_{k_n}) = \sum_{\pi \in \P_2(n)} \prod_{\{x,y\} \in \pi} E(X_{k_x} X_{k_y})
\]
where $\P_2(n)$ denotes all pair partitions on $\{1,\ldots, n\}$.  Furthermore 
\[
E(X_i X_j) = c_{i,j}.
\]
\end{thm}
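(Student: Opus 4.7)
The plan is to establish Wick's formula by induction on $n$, using the Gaussian integration-by-parts identity as the main tool. First, I would verify the auxiliary claim $E(X_i X_j) = c_{i,j}$ by a change of variables diagonalizing $C$ and reducing to the one-dimensional Gaussian second moment; this is just the statement that the covariance matrix of the density $\frac{1}{\sqrt{(2\pi)^n \det(C)}} e^{-\frac{1}{2}\langle C^{-1} \vec{x}, \vec{x}\rangle}$ is exactly $C$.

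The key tool is the multivariate Gaussian integration-by-parts identity: for any polynomial $f$ on $\bR^n$,
\[
E(X_i f(X_1, \ldots, X_n)) = \sum_{j=1}^n c_{i,j} \, E((\partial_j f)(X_1, \ldots, X_n)).
\]
This follows from rewriting $x_i e^{-\frac{1}{2}\langle C^{-1} \vec{x}, \vec{x}\rangle}$ as $-\sum_j c_{i,j}\, \partial_{x_j}\bigl( e^{-\frac{1}{2}\langle C^{-1} \vec{x}, \vec{x}\rangle}\bigr)$, using that $C$ has $(i,j)$-entry $c_{i,j}$ and that $\sum_j c_{i,j} (C^{-1}\vec{x})_j = x_i$, then integrating by parts against $f$; the boundary contributions vanish due to the Gaussian decay.

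With these ingredients, Wick's formula is proved by induction on $n$. When $n$ is odd both sides vanish (the left-hand side by the symmetry $\vec{x} \mapsto -\vec{x}$ of the Gaussian density, and the right-hand side because $\P_2(n) = \emptyset$), so I may assume $n$ is even. Applying the integration-by-parts identity with $f(x_1, \ldots, x_n) = x_{k_2} \cdots x_{k_n}$ yields
\[
E(X_{k_1} X_{k_2} \cdots X_{k_n}) = \sum_{\ell = 2}^n c_{k_1, k_\ell} \, E(X_{k_2} \cdots \widehat{X}_{k_\ell} \cdots X_{k_n}),
\]
where the hat indicates omission. Invoking the inductive hypothesis on each of the $n-2$ length moments on the right and observing that the data $(\ell, \pi')$ with $\ell \in \{2, \ldots, n\}$ and $\pi' \in \P_2(\{2,\ldots,n\}\setminus\{\ell\})$ is in bijection with $\P_2(n)$ via $\pi = \pi' \cup \{\{1,\ell\}\}$, the formula follows. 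The main obstacle is the indexing bookkeeping in this bijection, but it is entirely routine and no deep idea is required beyond the integration-by-parts step.
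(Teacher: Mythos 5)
The paper does not prove this statement at all: Wick's formula is quoted as a known result with a citation to \cite{NS2006}*{Theorem 22.3}, so there is no in-paper argument to compare against. Your proposal is a correct, self-contained proof and is essentially the standard textbook argument: the Gaussian integration-by-parts identity $E(X_i f(\vec X)) = \sum_j c_{i,j} E(\partial_j f(\vec X))$ is derived correctly (the key algebraic point $\sum_j c_{i,j}(C^{-1}\vec x)_j = x_i$ is exactly right, and the boundary terms do vanish against Gaussian decay), the application with $f = x_{k_2}\cdots x_{k_n}$ correctly produces $\sum_{\ell=2}^n c_{k_1,k_\ell} E(X_{k_2}\cdots \widehat{X}_{k_\ell}\cdots X_{k_n})$ even when indices repeat, and the bijection between pairs $(\ell,\pi')$ and pair partitions $\pi \in \P_2(n)$ containing the block $\{1,\ell\}$ closes the induction. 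The odd-$n$ base case via the symmetry $\vec x \mapsto -\vec x$ is also fine. No gaps.
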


\begin{exam}
\label{exam:diag}
If
\[
C = \left[  \begin{array}{cc} c_{X,X} & c_{X,Y}\\ c_{Y,X} & c_{Y,Y}  \end{array} \right]
\]
is a non-singular positive matrix with real entries, then, by Definition \ref{defn:Gaussian} and Theorem \ref{thm:Wick}, there exists a  Gaussian family $X, Y$ such that
\[
E(X^2) = c_{X,X}, \quad E(XY) = c_{X, Y}, \quad E(YX) = c_{Y, X}, \qand E(Y^2) = c_{Y, Y}.
\]
\end{exam}

\begin{exam}
\label{diag:off-diag}
If
\[
C = \left[  \begin{array}{cc} c_{X,X} & c_{X,Y}\\ c_{Y,X} & c_{Y,Y}  \end{array} \right]
\]
is a non-singular positive matrix with real entries, then we claim there exists a complex Gaussian family $X, Y$ such that
\[
E(X^2) = 0, \quad E(XY) = 0, \quad E(YX) = 0, \qand E(Y^2) = 0
\]
yet
\[
E\left(X \overline{X}\right) = c_{X,X}, \quad E\left(X\overline{Y}\right) = c_{X, Y}, \quad E\left(Y\overline{X}\right) = c_{Y, X}, \qand E\left(Y\overline{Y}\right) = c_{Y, Y}.
\]
Indeed consider the Gaussian family $Z_1, \ldots, Z_4$ with the covariance matrix
\[
\frac{1}{2}\left(C \otimes I_2\right) = \frac{1}{2} \left[  \begin{array}{cccc} c_{X,X}  & 0 & c_{X, Y} & 0 \\ 0 & c_{X,X} & 0 & c_{X, Y} \\ c_{Y, X} & 0 & c_{Y, Y} & 0 \\ 0 & c_{Y, X} & 0 & c_{Y, Y} \end{array} \right],
\]
which is clearly a non-singular positive matrix with real entries.  Therefore, if $X = Z_1 + iZ_2$ and $Y = Z_3 + iZ_4$, then $X, Y$ is a complex Gaussian family that satisfies the above equations.  Indeed
\begin{align*}
E\left(X^2\right) &= E\left(Z_1^2\right) + iE(Z_1Z_2) + i E(Z_2Z_1) - E\left(Z_2^2\right)  = 0,\\
E\left(X \overline{X}\right) &= E\left(Z_1^2\right) + iE(Z_1Z_2) - i E(Z_2Z_1) + E\left(Z_2^2\right)=c_{X,X},\\
E(XY) &= E(Z_1Z_3) + i E(Z_2Z_3) + iE(Z_1Z_4) - E(Z_2Z_4) = 0,\\
E\left(X\overline{Y}\right) &= E(Z_1Z_3) + i E(Z_2Z_3) - iE(Z_1Z_4) + E(Z_2Z_4) = c_{X,Y},
\end{align*}
and similar computations yield the other equalities.
\end{exam}

We now introduce Construction \ref{cons:twisting-action} into this setting.
\begin{defn}
For $N \in \bN$ an $N \times N$ \emph{random pair of matrices on $(L_\infty(\Omega, \mu), E)$} is a pair $(X^\ell, X^r)$ where $X^\ell$ is a left matrix and $X^r$ is a right matrix with entries from $L_\infty(\Omega, \mu) \in \L(\L_2(\Omega, \mu))$.
\end{defn}

\begin{rem}
Note that a random pair of matrices is not simply a pair of random matrices, but a pair of random matrices with a certain action on elements of $\M_N(L_\infty(\Omega, \mu))$.
\end{rem}

\begin{defn}
Let 
\[
C = \left[  \begin{array}{cc} c_{\ell,\ell} & c_{\ell,r}\\ c_{r,\ell} & c_{r,r}  \end{array} \right]
\]
be a non-singular, positive matrix with real entries.  A \emph{self-adjoint $C$-Gaussian random pair of matrices} is an $N \times N$ random pair of matrices $(X^\ell, X^r)$ on $(L_\infty(\Omega, \mu), E)$ with $X^\ell = L([X^\ell_{i,j}])$ and $X^r = R([X^r_{i,j}])$ where
\begin{enumerate}
\item $X^k_{i,j} \in L_\infty(\Omega, \mu)$ for all $k \in \{\ell, r\}$ and $i,j \in \{1,\ldots, N\}$,
\item $\overline{X^k_{i,j}} = X^k_{j,i}$ for all $k \in \{\ell, r\}$ and $i,j \in \{1,\ldots, N\}$, and
\item $\left\{X^k_{i,i}, \Re(X^k_{i,j}), \Im(X^k_{i,j}) \, \mid \, k \in \{\ell, r\}, i,j \in \{1, \ldots, N\}, i < j\right\}$ is a Gaussian family such that
\[
E(X^{k_1}_{i,j} X^{k_2}_{l,m}) = \frac{1}{N} \delta_{i,m} \delta_{j,l} c_{k_1, k_2}.
\]
\end{enumerate}  
\end{defn}

\begin{rem}
\label{rem:existence-of-covariance-matrix}
Given a non-singular, positive matrix with real entries
\[
C = \left[  \begin{array}{cc} c_{\ell,\ell} & c_{\ell,r}\\ c_{r,\ell} & c_{r,r}  \end{array} \right]
\]
one can always construct an $N \times N$ self-adjoint $C$-Gaussian random pair of matrices by Definition \ref{defn:Gaussian} and by taking direct sums of the covariance matrices from Examples \ref{exam:diag} and \ref{diag:off-diag}.
\end{rem}

\begin{rem}
Due to commutativity and positivity requirements, only certain bi-free central limit distributions $(\{Z_i\}_{i \in I}, \{Z_j\}_{j \in J})$ on $(\A, \varphi)$ will be limits distributions of random pairs of matrices.  Indeed we will require that $(\A,\varphi)$ is a $*$-non-commutative probability space, $\varphi$ is a positive linear functional, and the covariance matrix of $(\{Z_i\}_{i \in I}, \{Z_j\}_{j \in J})$ is positive, non-singular, and has real entries.  In particular, each $Z_i$ and $Z_j$ must be a non-zero self-adjoint semicircular variable.  Furthermore, as the covariance matrix is self-adjoint with real entries, $\kappa_\chi(Z_{k_1}, Z_{k_2}) = \kappa_{\chi'}(Z_{k_2}, Z_{k_1})$ for all $k_1, k_2 \in I \sqcup J$.  Since a joint moment of $(\{Z_i\}_{i \in I}, \{Z_j\}_{j \in J})$ can be computed as sums of products of second-order $(\ell, r)$-cumulants by the definitions and results of Section \ref{sec:background}, one can verify that $(\{Z_i\}_{i \in I}, \{Z_j\}_{j \in J})$ commute in distribution.  If $C$ is the covariance of $(\{Z_i\}_{i \in I}, \{Z_j\}_{j \in J})$, we call $(\{Z_i\}_{i \in I}, \{Z_j\}_{j \in J})$ a \emph{$C$-bi-free central limit distribution}.
\end{rem}

\begin{thm}
\label{thm:bi-free-central-limit}
Let 
\[
C = \left[  \begin{array}{cc} c_{\ell,\ell} & c_{\ell,r}\\ c_{r,\ell} & c_{r,r}  \end{array} \right]
\]
be a non-singular, positive matrix with real entries and let $(S_\ell, S_r)$ be a $C$-bi-free central limit distribution with respect to $\psi$. For each $N \in \bN$ let $(X^\ell(N), X^r(N))$ be an $N \times N$ self-adjoint $C$-Gaussian random pair of matrices.  Then the joint distribution of $(X^\ell(N), X^r(N))$ with respect $\frac{1}{N} \tr \circ E_{\L(\X_N)}$ tends to the joint distribution of $(S_\ell, S_r)$ with respect to $\psi$ as $N$ tends to infinity; that is, for every $n \in \bN$ and every $\chi : \{1,\ldots, n\} \to \{\ell, r\}$,
\[
\lim_{N \to \infty} \frac{1}{N} \tr\left(E_{\L(\X_N)}\left(X^{\chi(1)}(N) \cdots X^{\chi(n)}(N)\right)\right) = \psi\left(S_{\chi(1)} \cdots S_{\chi(n)}\right).
\]
\end{thm}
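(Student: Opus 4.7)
The plan is to combine the expansion of matrix products from Lemma \ref{lem:expanding-matrices} with Wick's formula (Theorem \ref{thm:Wick}) to reduce moments of $(X^\ell(N), X^r(N))$ to a sum over pair partitions, and then use a genus-type analysis to identify which partitions survive in the $N \to \infty$ limit.

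First, by Remark \ref{rem:distributions-of-matrices-depend-only-on-original-state}, the quantity $\frac{1}{N}\tr(E_{\L(\X_N)}(X^{\chi(1)}(N) \cdots X^{\chi(n)}(N)))$ equals $\frac{1}{N}$ times the trace of the matrix obtained by applying $E = \varphi_\X$ entry-wise to the formal product. Using Lemma \ref{lem:expanding-matrices} and Remark \ref{rem:indices-that-matter}, the matrix trace forces the cyclic condition $j_{s_\chi(k)} = i_{s_\chi(k+1)}$ (with $n+1 \equiv 1$), so that the sum over $2n$ matrix indices collapses to one over $(a_1, \ldots, a_n) \in \{1,\ldots, N\}^n$ via $a_k = i_{s_\chi(k)}$ and $a_{k+1} = j_{s_\chi(k)}$. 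The moment thereby equals
\[
\frac{1}{N} \sum_{a_1, \ldots, a_n = 1}^N E\!\left(X^{\chi(1)}_{i_1, j_1} \cdots X^{\chi(n)}_{i_n, j_n}\right).
\]
Wick's formula then expresses this expectation as a sum over pair partitions $\pi \in \P_2(n)$, and for each pair $\{x, y\} = \{s_\chi(k), s_\chi(l)\} \in \pi$ with $k < l$, the covariance contributes a factor of $\frac{1}{N} c_{\chi(x), \chi(y)}\, \delta_{i_x, j_y}\delta_{j_x, i_y}$; that is, the scalar $\frac{1}{N} c_{\chi(x), \chi(y)}$ together with the identifications $a_k = a_{l+1}$ and $a_{k+1} = a_l$.

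Consequently, writing $n = 2p$ and letting $c(\pi, \chi)$ denote the number of equivalence classes on $\{1, \ldots, n\}$ (with $n+1 \equiv 1$) generated by these identifications, the contribution of a fixed $\pi \in \P_2(n)$ is exactly $N^{c(\pi, \chi) - p - 1} \prod_{\{x,y\} \in \pi} c_{\chi(x), \chi(y)}$. The heart of the argument, and the step I expect to be the main obstacle, is to establish that $c(\pi, \chi) \leq p + 1$, with equality if and only if $\pi \in BNC(\chi)$. This will be a bi-free counterpart of the classical genus bound for matrix Wick calculus: after relabeling the cyclic index sequence $(a_1, \ldots, a_n)$ through $s_\chi$, the constraints above become those arising from applying the standard trace method to the pair partition $s_\chi^{-1} \cdot \pi$ on the naturally ordered set $\{1, \ldots, n\}$, and the classical Euler-characteristic computation then gives equality if and only if $s_\chi^{-1} \cdot \pi$ is non-crossing. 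By definition of $BNC(\chi)$, this is precisely the bi-non-crossing condition.

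Passing to the limit as $N \to \infty$ therefore leaves only the bi-non-crossing pair partitions, yielding
\[
\lim_{N \to \infty} \frac{1}{N}\tr\!\left(E_{\L(\X_N)}\!\left(X^{\chi(1)}(N) \cdots X^{\chi(n)}(N)\right)\right) = \sum_{\pi \in BNC(\chi) \cap \P_2(n)} \prod_{\{x,y\} \in \pi} c_{\chi(x), \chi(y)}.
\]
Since the only non-zero $(\ell, r)$-cumulants of a $C$-bi-free central limit distribution are the pair cumulants, with values $c_{\chi(x), \chi(y)}$, the moment-cumulant formula (\ref{eq:mom-in-terms-of-cum}) rewrites $\psi(S_{\chi(1)} \cdots S_{\chi(n)})$ as the same sum, completing the proof.
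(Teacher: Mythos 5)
Your proposal is correct and follows essentially the same route as the paper: expand via Lemma \ref{lem:expanding-matrices} and Remark \ref{rem:indices-that-matter}, apply Wick's formula, re-index through $s_\chi$ so the delta constraints become the classical ones for $s_\chi^{-1}\cdot\pi$, and invoke the standard counting argument (the computation the paper cites from Nica--Speicher, which is exactly your bound $c(\pi,\chi)\leq p+1$ with equality iff $s_\chi^{-1}\cdot\pi$ is non-crossing) before concluding with the moment-cumulant formula. The only difference is that you spell out the Euler-characteristic count that the paper delegates to the reference.
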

\begin{proof}
The proof presented here is motivated by the proofs in \cite{NS2006}*{Lecture 22}.  Notice, by Lemma \ref{lem:expanding-matrices}, Remark \ref{rem:indices-that-matter}, and Theorem \ref{thm:Wick}, that 
\begin{align}
\frac{1}{N} \tr \left(E_{\L(\X)}\left(X^{\chi(1)} \cdots X^{\chi(n)}\right)\right)  
&= \frac{1}{N^{\frac{n}{2}+1}} \sum^N_{i_1, \ldots, i_n=1} E\left(X^{\chi(1)}_{i_{1}, i_{s_\chi(s^{-1}_\chi(1) + 1)}}X^{\chi(2)}_{i_{2}, i_{s_\chi(s^{-1}_\chi(2) + 1)}} \cdots X^{\chi(n)}_{i_{n}, i_{s_\chi(s^{-1}_\chi(n) + 1)}}  \right) \nonumber \\
&= \frac{1}{N^{\frac{n}{2}+1}} \sum^N_{i_1, \ldots, i_n=1} \sum_{\pi\in \P_2(n)} \prod_{\{x,y\} \in \pi} E\left(X^{\chi(x)}_{i_{x}, i_{s_\chi(s^{-1}_\chi(x) + 1)}}X^{\chi(y)}_{i_{y}, i_{s_\chi(s^{-1}_\chi(y) + 1)}} \right) \nonumber \\
&= \frac{1}{N^{\frac{n}{2}+1}} \sum^N_{j_1, \ldots, j_n=1} \sum_{\pi\in \P_2(n)} \prod_{\{x,y\} \in \pi} E\left(X^{\chi(x)}_{j_{s_\chi^{-1}(x)}, j_{s^{-1}_\chi(y) + 1}}X^{\chi(y)}_{j_{s_\chi^{-1}(y)}, j_{s^{-1}_\chi(x) + 1}} \right) \label{eq:wick-mess}
\end{align}
(where the last line follows by replacing $i_k$ with $j_{s^{-1}_\chi(k)}$).
Notice equation (\ref{eq:wick-mess}) is zero unless $n=2m$, in which case it equals
\begin{align*}
& \frac{1}{N^{1+m}} \sum_{\pi\in \P_2(2m)} \sum^N_{j_1, \ldots, j_{2m}=1}  \prod_{\{x,y\} \in \pi} \delta_{j_{s_\chi^{-1}(x)}, j_{s^{-1}_\chi(y) + 1}}\delta_{j_{s_\chi^{-1}(y)}, j_{s^{-1}_\chi(x) + 1}} c_{\chi(x), \chi(y)} \\
&= \frac{1}{N^{1+m}} \sum_{\pi\in \P_2(2m)} \sum^N_{j_1, \ldots, j_{2m}=1}  \prod_{\{s_\chi(x),s_\chi(y)\} \in \pi} \delta_{j_{x}, j_{y + 1}}\delta_{j_{y}, j_{x + 1}} c_{\chi(s_\chi(x)), \chi(s_\chi(y))} \\
&= \frac{1}{N^{1+m}} \sum_{\pi\in \P_2(2m)} \sum^N_{j_1, \ldots, j_{2m}=1}  \prod_{\{x,y\} \in s_\chi^{-1} \cdot \pi} \delta_{j_{x}, j_{y + 1}}\delta_{j_{y}, j_{x + 1}} c_{\chi(s_\chi(x)), \chi(s_\chi(y))}.
\end{align*}

The computations on \cite{NS2006}*{page 365} demonstrate for a fixed $\pi \in \P_2(2m)$ that
\[
\lim_{N \to \infty} \frac{1}{N^{1+m}} \sum^N_{j_1, \ldots, j_{2m}=1}  \prod_{\{x,y\} \in \pi} \delta_{j_{x}, j_{y + 1}}\delta_{j_{y}, j_{x + 1}}  = 0
\]
unless $\pi$ is non-crossing in which case the limit is 1.  Consequently, for a fixed $\pi \in \P_2(2m)$, 
\begin{align}
\lim_{N \to \infty} \frac{1}{N^{1+m}} \sum^N_{i_1, \ldots, i_{2m}=1}  \prod_{\{x,y\} \in s_\chi^{-1} \cdot \pi} \delta_{i_{x}, i_{y + 1}}\delta_{i_{y}, i_{x + 1}} c_{\chi(s_\chi(x)), \chi(s_\chi(y))} = 0 \label{eq:limit-for-commutative}
\end{align}
unless $s^{-1}_\chi \cdot \pi$ is non-crossing pair partition, which means $\pi \in BNC(\chi)$ is a pair partition.  For a pair partition $\pi \in BNC(\chi)$, the limit in equation (\ref{eq:limit-for-commutative}) is $\prod_{\{x,y\} \in \pi} c_{\chi(x), \chi(y)}$.   
Therefore, if $BNC_2(\chi)$ denotes the bi-non-crossing pair partitions corresponding to $\chi$, then
\begin{align*}
\lim_{N \to \infty}  \frac{1}{N} \tr\left(E_{\L(\X_N)}\left(X^{\chi(1)}(N) \cdots X^{\chi(n)}(N)\right)\right) 
&= \sum_{\pi \in BNC_2(\chi)} \prod_{\{x,y\} \in \pi} c_{\chi(x), \chi(y)} \\
&= \sum_{\pi \in BNC_2(\chi)} \kappa_\pi(S_{\chi(1)}, S_{\chi(2)}, \ldots, S_{\chi(n)}) \\
&= \varphi\left(S_{\chi(1)} \cdots S_{\chi(n)}\right). \qedhere
\end{align*}
\end{proof}

Using the above ideas, asymptotic bi-freeness of random pairs of matrices is easily obtained via observing the correct `colouring'.

\begin{thm}
\label{thm:ass-bi-free}
Fix an index set $K$.  For each $k \in K$ let $C_k$ be a $2 \times 2$ non-singular, positive matrix with real entries and let $\{(S_{\ell, k}, S_{r, k})\}_{k \in K}$ be a collection of bi-free two-faced pairs with respect to $\psi$ where $(S_{\ell, k}, S_{r, k})$ is a $C_k$-bi-free central limit distribution. For each $N \in \bN$ and $k \in K$ let $(X^{\ell,k}(N), X^{r,k}(N))$ be an $N \times N$ self-adjoint $C_k$-Gaussian random pair of matrices such that entries are independent for different $k \in K$.  

For every $n \in \bN$, every $\chi : \{1,\ldots, n\} \to \{\ell, r\}$, and every $\epsilon : \{1,\ldots, n\} \to K$,
\[
\lim_{N \to \infty} \frac{1}{N} \tr\left(E_{\L(\X_N)}\left(X^{\chi(1), \epsilon(1)}(N) \cdots X^{\chi(n), \epsilon(n)}(N)\right)\right) = \psi\left(S_{\chi(1), \epsilon(1)} \cdots S_{\chi(n), \epsilon(n)}\right).
\]
In particular, $\{(X^{\ell,k}(N), X^{r,k}(N))\}_{k \in K}$ are asymptotically bi-free with respect $\frac{1}{N} \tr \circ E_{\L(\X_N)}$.
\end{thm}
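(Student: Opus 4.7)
The plan is to mirror the proof of Theorem \ref{thm:bi-free-central-limit} but carry along the extra colouring $\epsilon$. First I would apply Lemma \ref{lem:expanding-matrices} and Remark \ref{rem:indices-that-matter} to rewrite
\[
\tfrac{1}{N}\tr\!\left(E_{\L(\X_N)}\!\left(X^{\chi(1),\epsilon(1)}(N)\cdots X^{\chi(n),\epsilon(n)}(N)\right)\right)
\]
as a sum over multi-indices of an expectation of a product of $n$ Gaussian entries, and then invoke Wick's formula (Theorem \ref{thm:Wick}) to express this expectation as a sum over pair partitions $\pi \in \P_2(n)$. Because entries with different values of $k\in K$ are independent (and centred Gaussian), $E(X^{\chi(x),\epsilon(x)}_{a,b}X^{\chi(y),\epsilon(y)}_{c,d})=0$ whenever $\epsilon(x)\neq\epsilon(y)$. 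Hence only pair partitions $\pi\in\P_2(n)$ satisfying $\pi\leq\epsilon$ (when $\epsilon$ is viewed as a partition) survive.

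Next, exactly as in the display chain at the end of the proof of Theorem \ref{thm:bi-free-central-limit}, the covariance rule
$E(X^{k_1,k}_{i,j}X^{k_2,k}_{l,m})=\tfrac{1}{N}\delta_{i,m}\delta_{j,l}c_{k_1,k_2;k}$
introduces Kronecker deltas that match the pattern
$\delta_{j_x,j_{y+1}}\delta_{j_y,j_{x+1}}$ on the blocks $\{x,y\}\in s_\chi^{-1}\cdot\pi$. Then I would quote verbatim the combinatorial counting lemma from \cite{NS2006}*{page 365}: for a fixed pair partition, the normalized sum of those products of deltas tends to $1$ if the pair partition is non-crossing and to $0$ otherwise. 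Applied here this forces $s_\chi^{-1}\cdot\pi$ to be non-crossing, i.e.\ $\pi\in BNC_2(\chi)$. Combining with the surviving constraint $\pi\leq\epsilon$, we obtain
\[
\lim_{N\to\infty}\tfrac{1}{N}\tr\!\left(E_{\L(\X_N)}\!\left(X^{\chi(1),\epsilon(1)}(N)\cdots X^{\chi(n),\epsilon(n)}(N)\right)\right)=\sum_{\substack{\pi\in BNC_2(\chi)\\ \pi\leq\epsilon}}\prod_{\{x,y\}\in\pi}c_{\chi(x),\chi(y);\epsilon(x)}.
\]

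The remaining step is to identify this limit with $\psi(S_{\chi(1),\epsilon(1)}\cdots S_{\chi(n),\epsilon(n)})$. By equation (\ref{eq:mom-in-terms-of-cum}), this moment equals $\sum_{\pi\in BNC(\chi)}\kappa_\pi(S_{\chi(1),\epsilon(1)},\ldots,S_{\chi(n),\epsilon(n)})$. Since the families $(S_{\ell,k},S_{r,k})$ are bi-free, Theorem \ref{thm:universal-moment-bi-free} tells us that every mixed $(\ell,r)$-cumulant vanishes, so the surviving $\pi$ must satisfy $\pi\leq\epsilon$; and because each $(S_{\ell,k},S_{r,k})$ is a $C_k$-bi-free central limit distribution, only pair partitions contribute and each pair cumulant $\kappa_{\chi|_{\{x,y\}}}(S_{\chi(x),\epsilon(x)},S_{\chi(y),\epsilon(y)})$ equals $c_{\chi(x),\chi(y);\epsilon(x)}$. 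Multiplicativity of $\kappa_\pi$ over blocks then yields exactly the same sum, completing the matching.

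The main obstacle is bookkeeping: correctly justifying that independence-in-$k$ combined with Wick imposes $\pi\leq\epsilon$ at the matrix level, and then aligning this with the bi-free moment-cumulant formula on the limit side. Once that bookkeeping is in place, the asymptotic bi-freeness conclusion is immediate from the universal moment formula (\ref{eq:universal-polys}), since the limit moments match those dictated by bi-freeness of $\{(S_{\ell,k},S_{r,k})\}_{k\in K}$.
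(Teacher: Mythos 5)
Your proposal is correct and follows essentially the same route as the paper: the paper's proof also applies Lemma \ref{lem:expanding-matrices}, Remark \ref{rem:indices-that-matter}, and Wick's formula to reduce to a sum over pair partitions with an extra $\delta_{\epsilon(x),\epsilon(y)}$ factor from independence in $k$, then invokes the asymptotic counting from the proof of Theorem \ref{thm:bi-free-central-limit} to conclude that only bi-non-crossing pair partitions refining $\epsilon$ survive, which matches the bi-free moment-cumulant expansion of $\psi(S_{\chi(1),\epsilon(1)}\cdots S_{\chi(n),\epsilon(n)})$. The only difference is presentational: you spell out the final identification with the cumulant formula, which the paper leaves implicit by referring back to Theorem \ref{thm:bi-free-central-limit}.
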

\begin{proof}
By repeating the ideas of Theorem \ref{thm:bi-free-central-limit}, we obtain
\begin{align*}
\frac{1}{N} \tr &\left(E_{\L(\X)}\left(X^{\chi(1), \epsilon(1)} \cdots X^{\chi(n), \epsilon(n)}\right)\right) \\
&= \frac{1}{N^{\frac{n}{2} + 1}} \sum_{\pi\in \P_2(n)} \sum^N_{j_1, \ldots, j_{n}=1}  \prod_{\{x,y\} \in \pi} \delta_{j_{s_\chi^{-1}(x)}, j_{s^{-1}_\chi(y) + 1}}\delta_{j_{s_\chi^{-1}(y)}, j_{s^{-1}_\chi(x) + 1}} \delta_{\epsilon(x), \epsilon(y)} c^{\epsilon(x)}_{\chi(x), \chi(y)}.
\end{align*}
The result now follows as in Theorem \ref{thm:bi-free-central-limit} as the only $\pi$ that contribute asymptotically to the sum are $\pi\in BNC_2(\chi)$ that may be coloured correctly.
\end{proof}

\begin{rem}
A more general result than Theorem \ref{thm:ass-bi-free} is easy to obtain.  Indeed let $(\{Z_i\}_{i \in I}, \{Z_j\}_{j \in J})$ be a (centred) bi-free central limit distribution and let $C = (c_{k,m})_{k,m \in I \sqcup J}$ be the $(I \sqcup J) \times (I \sqcup J)$ covariance matrix.

If $C$ is non-singular, positive matrix with real entries, then similar arguments to those given in Remark \ref{rem:existence-of-covariance-matrix} show that for each $N \in \bN$ there exists $\{X^k_{i,j}(N) \, \mid \, i,j \in \{1,\ldots, N\}, k \in I \sqcup J\} \subseteq (L_\infty(\Omega, \mu), E)$ such that
\begin{itemize}
\item $\overline{X^k_{i,j}(N)} = X^k_{j,i}(N)$ for all $k \in I \sqcup J$ and $i,j \in \{1,\ldots, N\}$, and
\item $\left\{X^k_{i,i}(N), \Re(X^k_{i,j}(N)), \Im(X^k_{i,j}(N)) \, \mid \, i,j \in \{1, \ldots, N\}, i < j, k \in I \sqcup J \right\}$ is a Gaussian family such that
\[
E\left(X^{k_1}_{i,j}(N) X^{k_2}_{l,m}(N)\right) = \frac{1}{N} \delta_{i,m} \delta_{j,l} c_{k_1, k_2}.
\]
\end{itemize}

For each $N \in \bN$ and $k \in I \sqcup J$, let $X^k(N) = Z([X^k_{i,j}])$ where $Z = L$ if $k \in I$ and $Z = R$ if $k \in J$.
Similar arguments to those above show that the asymptotic joint distribution of $(\{X^i(N)\}_{i \in I}, \{X^j(N)\}_{j \in J})$ with respect to $\frac{1}{N} \tr \circ E_{\L(\X_N)}$ as $N \to \infty$ is equal to the joint distribution of $(\{Z_i\}_{i \in I}, \{Z_j\}_{j \in J})$.  Theorem \ref{thm:ass-bi-free} then follows  by using the correct covariance matrix.

\end{rem}

For those familiar with the development of bi-free probability, it should not be a surprise that results from free probability can be generalized to bi-free probability provided all left operators commute with all right operators.  Indeed the following result can be interpreted as ``in the commutative world, asymptotic bi-freeness is pretty much asymptotic freeness."

\begin{thm}
\label{thm:ass-free-to-ass-bi-free}
Let $K$ be a fixed set and consider $(\L(\X), \varphi_\X)$ for some pointed vector space $\X$. For each $N \in \bN$ let $\{X_k(N)\}_{k \in K}$ be left $N \times N$ matrices and let $\{Y_k(N)\}_{k \in K}$ be right $N \times N$ matrices of $\L(\X)$.  Suppose that the joint distributions of $\{X_k(N)\}_{k \in K} \cup \{Y_k(N)\}_{k \in K}$ converge with respect to $\frac{1}{N} \tr \circ E_{\L(\X_N)}$.  Furthermore, suppose
\begin{enumerate}
\item $X_k(N) Y_m(N) = Y_m(N) X_k(N)$ for all $k,m \in K$ and for all $N$, and
\item $X_k(N) I_N = Y_k(N) I_N$ for all $k \in K$ and $N$.
\end{enumerate}
If $K = K_1 \sqcup K_2$ then $\{X_k(N)\}_{k \in K_1}$ is asymptotically free from $\{X_k(N)\}_{k \in K_2}$ with respect to $\frac{1}{N} \tr \circ E_{\L(\X_N)}$ if and only if $(\{X_k(N)\}_{k \in K_1}, \{Y_k(N)\}_{k \in K_1})$ is asymptotically bi-free from $(\{X_k(N)\}_{k \in K_2}, \{Y_k(N)\}_{k \in K_2})$ with respect to $\frac{1}{N} \tr \circ E_{\L(\X_N)}$.
\end{thm}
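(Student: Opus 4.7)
The plan is to exploit hypotheses (1) and (2) to reduce every mixed moment of the $X_k(N)$'s and $Y_k(N)$'s (taken under $\tfrac{1}{N}\tr\circ E_{\L(\X_N)}$) to a pure-left moment whose word has been reordered by the permutation $s_\chi$. This cascades into a cumulant identity $\kappa_\chi = k_n$ between bi-free $(\ell,r)$-cumulants and ordinary free cumulants of the reordered left matrices, and the theorem then falls out of the cumulant-vanishing clause of Theorem \ref{thm:universal-moment-bi-free} together with the analogous characterization of freeness via vanishing of free cumulants on non-constant colorings.

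\emph{The reduction.} Fix $N$, $\chi:\{1,\ldots,n\}\to\{\ell,r\}$, and a word $Z_1\cdots Z_n$ with $Z_m = X_{k_m}(N)$ when $\chi(m)=\ell$ and $Z_m = Y_{k_m}(N)$ when $\chi(m)=r$. By (1), $Z_1\cdots Z_n$ can be rewritten with all left matrices first (in their original order within $\chi^{-1}(\ell)$) followed by all right matrices (in their original order within $\chi^{-1}(r)$). If $\chi^{-1}(r) = \{j_1<\cdots<j_q\}$, an induction on $q$ that peels off $Y_{k_{j_1}}(N)$ from the left---using (2) to replace the innermost $Y_{k_{j_t}}(N)I_N$ by $X_{k_{j_t}}(N)I_N$ and then (1) to commute the resulting $X$ past the remaining $Y$'s---gives
\[
Y_{k_{j_1}}(N)\cdots Y_{k_{j_q}}(N)\,I_N \;=\; X_{k_{j_q}}(N)\cdots X_{k_{j_1}}(N)\,I_N.
\]
Since $s_\chi$ reads $\chi^{-1}(\ell)$ in increasing order followed by $\chi^{-1}(r)$ in decreasing order, writing $\tilde Z_m := X_{k_m}(N)$ for the ``left version'' of $Z_m$ yields
\[
(Z_1\cdots Z_n)(I_N) \;=\; \bigl(\tilde Z_{s_\chi(1)}\cdots\tilde Z_{s_\chi(n)}\bigr)(I_N).
\]
Applying $p_{\X_N}$ and $\tfrac{1}{N}\tr$, and then repeating the argument block-by-block, gives for every $\pi\in BNC(\chi)$
\[
\varphi_\pi(Z_1,\ldots,Z_n) \;=\; \varphi_{\pi'}(W_1,\ldots,W_n),
\]
where $\pi' := s_\chi^{-1}\cdot\pi\in NC(n)$, $W_i := \tilde Z_{s_\chi(i)}$, and $\varphi := \tfrac{1}{N}\tr\circ E_{\L(\X_N)}$. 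Combining this with $\mu_{BNC}(\pi,1_\chi) = \mu_{NC}(\pi',1_n)$ and \eqref{eq:cum-in-terms-of-mom} produces the cumulant identity
\[
\kappa_\chi(Z_1,\ldots,Z_n) \;=\; k_n(W_1,\ldots,W_n),
\]
with $k_n$ the ordinary free cumulant.

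\emph{Conclusion.} By the final clause of Theorem \ref{thm:universal-moment-bi-free}, asymptotic bi-freeness of the two pairs amounts to the vanishing, as $N\to\infty$, of $\kappa_\chi$ on every word with non-constant coloring $\epsilon:\{1,\ldots,n\}\to\{1,2\}$; the analogous free-probability characterization identifies asymptotic freeness of $\{X_k(N)\}_{k\in K_1}$ from $\{X_k(N)\}_{k\in K_2}$ with the vanishing of $k_n$ on every word with non-constant coloring. Because $s_\chi$ is a bijection, $\epsilon$ is non-constant if and only if $\epsilon\circ s_\chi$ is, so the identity $\kappa_\chi = k_n$ turns the two conditions into one another, yielding the desired equivalence.

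The main bookkeeping obstacle lies in the block-level identity $\varphi_\pi = \varphi_{\pi'}$: on each block $V$ of $\pi$ one must check that the tuple obtained by applying $s_{\chi|_V}$ to the increasing listing of $V$ coincides with $s_\chi$ applied to the increasing listing of the corresponding block $V' = s_\chi^{-1}(V)$ of $\pi'$. Both tuples turn out to list the elements of $V$ in the $\prec_\chi$-order, but this requires a careful unpacking of the definition of $s_\chi$ and of the bijection between $BNC(\chi)$ and $NC(n)$ induced by $s_\chi^{-1}$.
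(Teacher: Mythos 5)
Your proof is correct and takes essentially the same route as the paper: the key step, using hypotheses (1) and (2) to rewrite any mixed moment as the $s_\chi$-reordered pure-left moment, is exactly the mechanism the paper defers to via \cite{CNS2014-2}*{Theorem 10.2.1} and illustrates in Example \ref{exam:poisson}. Phrasing the equivalence through the cumulant-vanishing characterization rather than the universal moment polynomials of equation (\ref{eq:universal-polys}) is a cosmetic difference, not a different argument.
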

\begin{proof}
The proof of this result is obtained by demonstrating that the universal bi-free moment polynomials (i.e. equation (\ref{eq:universal-polys})) hold asymptotically for $(\{X_k(N)\}_{k \in K_1}, \{Y_k(N)\}_{k \in K_1})$ with $(\{X_k(N)\}_{k \in K_2}, \{Y_k(N)\}_{k \in K_2})$ if and only if the universal free polynomials (those in equation (\ref{eq:universal-polys}) restricted to $\chi$ with $\chi(m) = \ell$ for all $m$) hold asymptotically for $\{X_k(N)\}_{k \in K_1}$ with $\{X_k(N)\}_{k \in K_2}$.  A carbon copy of the proof of \cite{CNS2014-2}*{Theorem 10.2.1} with $\lim_{N \to \infty}$ inserted in multiple places yields the result.  We omit further details as similar arguments will be used later in the paper (see Example \ref{exam:poisson} and the proof of Theorem \ref{thm:asymptotic-bi-free-fock}).
\end{proof}
\begin{rem}
Note the condition ``$\{X_k(N)\}_{k \in K_1}$ is asymptotically free from $\{X_k(N)\}_{k \in K_2}$ with respect to $\frac{1}{N} \tr \circ E_{\L(\X_N)}$"  in Theorem \ref{thm:ass-free-to-ass-bi-free} is precisely saying that $\{X_k(N)\}_{k \in K_1}$ is asymptotically free from $\{X_k(N)\}_{k \in K_2}$ as usual matrices of operators.  As such, since random matrices have commutative entries, many results from random matrices immediate have random pairs of matrices analogues.  Furthermore, if a result can be proved for random matrices grouped into pairs of two, one immediately has a random pair of matrices result by viewing the first element in each pair as a left matrix and the second element in the pair as a right matrix.
\end{rem}

\begin{exam}
\label{exam:poisson}
As an example application of how Theorem \ref{thm:ass-free-to-ass-bi-free} and its proof work, we will demonstrate for every bi-free Poisson distribution $\mu$ from \cite{GHM2015}*{Example 3.11b} a random pair of matrices with limit distribution $\mu$.  Fix a number $\lambda \in (0, 1)$ and for each $N \in \bN$ choose $M_N \in \bN$ such that $\lim_{N \to \infty} \frac{M_N}{N} = \lambda$.  For each $N \in \bN$ let $Y(N)$ be an $N \times M_N$ random matrix whose entries are independent Gaussian random variables with mean 0 and variance 1.  If 
\[
X(N) = \frac{1}{M_N} Y(N)^* Y(N)
\]
(a Wishart matrix) then the Marchenko-Pastur law implies the limit distribution of $X(N)$ with respect to $\frac{1}{N} \tr \circ E_{\L(\X_N)}$ is equal to the free Poisson law $\mu_P$ with rate $\lambda$ and jump size 1.  Recall the $n^{\mathrm{th}}$ free cumulants of $\mu_P$ is $\lambda$ for all $n \in \bN$ so the $n^{\mathrm{th}}$ free cumulant of $X(N)$ with $\frac{1}{N} \tr \circ E_{\L(\X_N)}$ tends to $\lambda$ as $N$ tends to infinity.

For each $(\alpha, \beta) \in \bR^2$, \cite{GHM2015}*{Example 3.11b} defined the\emph{ bi-free Poisson distribution with rate $\lambda$ and jump size} $(\alpha, \beta)$, denoted $\mu_{bP, \alpha, \beta}$, to be the limit distribution of
\[
\left(\left(1-\frac{\lambda}{N}\right) \delta_{(0,0)} + \frac{\lambda}{N} \delta_{(\alpha, \beta)} \right)^{\boxplus\boxplus N}
\]
where $\boxplus\boxplus$ denotes the bi-free additive convolution.  Furthermore \cite{GHM2015} demonstrates that if $\chi : \{1,\ldots, n\} \to \{\ell, r\}$ then the $(\ell, r)$-cumulant of $\mu_{bP, \alpha, \beta}$ with respect to $\chi$ is $\lambda \alpha^{|\chi^{-1}(\{\ell\})|} \beta^{|\chi^{-1}(\{r\})|}$.

If, for each $N \in \bN$, $X^\ell(N) = \alpha L(X(N))$ and $X^r(N) = \beta R(X(N))$, we claim the joint distribution of the random pair of matrices $(X^\ell(N), X^r(N))$ with respect to $\Psi_N := \frac{1}{N} \tr \circ E_{\L(\X_N)}$ tends to $\mu_{bP, \alpha, \beta}$ as $N \to \infty$.  Indeed, by the moment-cumulant equations (\ref{eq:cum-in-terms-of-mom}) and (\ref{eq:mom-in-terms-of-cum}) it suffices to show  for all $\chi : \{1,\ldots, n\} \to \{\ell, r\}$ that
\[
\lim_{N \to \infty} \kappa_\chi\left(X^{\chi(1)}(N), \ldots, X^{\chi(n)}(N)\right) = \lambda \alpha^{|\chi^{-1}(\{\ell\})|} \beta^{|\chi^{-1}(\{r\})|}.
\]

Notice that if $\chi_\ell, \chi_r : \{1,\ldots, n\} \to \{\ell, r\}$ are defined by $\chi_\ell(k) =\ell$ and $\chi_r(k) = r$ for all $k$ then
\begin{align*}
\lim_{N \to \infty}\kappa_{\chi_\ell}\left(X^{\chi(1)}(N), \ldots, X^{\chi(n)}(N)\right) &= \lim_{N \to \infty}\kappa_n(\alpha X(N), \ldots, \alpha X(N))  =\lambda \alpha^n \text{ and} \\
\lim_{N \to \infty}\kappa_{\chi_r}\left(X^{\chi(1)}(N), \ldots, X^{\chi(n)}(N)\right) &= \lim_{N \to \infty}\kappa_n(\beta X(N), \ldots, \beta X(N))  =\lambda \beta^n
\end{align*}
since $\Psi_N(L(X(N))^n) = \Psi_N(R(X(N))^n) = \Psi_N(X(N)^n)$ (the later computed traditionally).  Therefore, if $\alpha = 0$ or $\beta = 0$, the proof is complete.  Thus we assume that $\alpha, \beta \neq 0$ and $\chi \neq \chi_\ell, \chi_r$.

Let $s$ be the permutation such that 
\[
\chi^{-1}(\{\ell\}) = \{s(1) < \ldots < s(k)\} \qqand \chi^{-1}(\{ r\}) = \{s(k+1) < \ldots < s(n)\}.
\]
Let $\hat\chi = \chi\circ s$. 
Note that replacing $\chi$ by $\hat\chi$ corresponds to moving all the right nodes down to be beneath the left ones, without changing their relative order.  Furthermore $s$ induces a natural isomorphism from $BNC(\chi)$ to $BNC(\hat\chi)$ via $\pi \mapsto s^{-1} \cdot \pi$ such that $\mu_{BNC}(\pi, 1_\chi) = \mu_{BNC}(s^{-1} \cdot \pi, 1_{\hat\chi})$ as $\mu_{BNC}$ is completely determined by the lattice structure (see Definition \ref{defn:mobius}).  Therefore, since $X^\ell(N)$ and $X^r(N)$ commute, we obtain that
\begin{align*}
\kappa_\chi\left(X^{\chi(1)}(N), \ldots, X^{\chi(n)}(N)\right) &= \sum_{\pi \in BNC(\chi)} \Psi_{N, \pi}\left(X^{\chi(1)}(N), \ldots, X^{\chi(n)}(N)\right) \mu_{BNC}(\pi, 1_\chi) \\
&= \sum_{\pi \in BNC(\hat\chi)} \Psi_{N, \pi}(\underbrace{X^{\ell}(N), \ldots, X^{\ell}(N)}_{|\chi^{-1}(\{\ell\})| \text{ copies}}, \underbrace{X^{r}(N), \ldots, X^{r}(N)}_{|\chi^{-1}(\{r\})| \text{ copies}} ) \mu_{BNC}(\pi, 1_{\hat\chi}).
\end{align*}
Let $\chi' : \{1,\ldots, n\} \to \{\ell, r\}$ be defined by $\chi'(n) = \ell$ and $\chi'(k) = \hat\chi(k)$.  Note that replacing $\hat\chi$ by $\chi'$ corresponds to moving the bottom right node to the left side and thus induced a natural isomorphism from $BNC(\hat\chi)$ to $BNC(\chi')$.  Since $X^r(N) I_N = \frac{\beta}{\alpha} X^\ell(N) I_N$, we obtain, as in \cite{CNS2014-2}*{Theorem 10.2.1}, that
\begin{align*}
\kappa_\chi & \left(X^{\chi(1)}(N), \ldots, X^{\chi(n)}(N)\right) \\ 
&= \frac{\beta}{\alpha} \sum_{\pi \in BNC(\chi')} \Psi_{N, \pi}(\underbrace{X^{\ell}(N), \ldots, X^{\ell}(N)}_{|\chi^{-1}(\{\ell\})| \text{ copies}}, \underbrace{X^{r}(N), \ldots, X^{r}(N)}_{|\chi^{-1}(\{r\})|-1 \text{ copies}}, X^\ell(N)  ) \mu_{BNC}(\pi, 1_{\chi'}).
\end{align*}
By repeating the above arguments (i.e. commuting $X^\ell(N)$ past all the $X^r(N)$ in the above expression and then changing the last $X^r(N)$ to a $X^\ell(N)$), we obtain that
\begin{align*}
\lim_{N \to \infty} \kappa_\chi\left(X^{\chi(1)}(N), \ldots, X^{\chi(n)}(N)\right) &= \lim_{N \to \infty} \left(\frac{\beta}{\alpha}  \right)^{|\chi^{-1}(\{r\})|} \kappa_{\chi_\ell}\left(X^{\chi(1)}(N), \ldots, X^{\chi(n)}(N)\right) \\
&=  \left(\frac{\beta}{\alpha}  \right)^{|\chi^{-1}(\{r\})|} \lambda \alpha^n = \lambda \alpha^{|\chi^{-1}(\{\ell\})|} \beta^{|\chi^{-1}(\{r\})|}
\end{align*}
as desired.
\end{exam}

We will generalize a few more results from random matrix theory to the bi-free setting by using Theorem \ref{thm:ass-free-to-ass-bi-free}.  Of course these results are not exhaustive and there are many more applications and generalizations of random matrix results.

For the first result, we will use the ideas of \cite{NS2006}*{Lecture 22} to demonstrate the asymptotic bi-freeness from Gaussian random  pairs of matrices and ``constant matrices''.    By constant matrices, we mean elements of $\M_N(\bC) \subseteq \M_N(\L_\infty(\Omega, \mu))$, say $\{D_m(N)\}_{m \in K'}$, such that there exists a non-commutative probability space $(\A, \varphi)$ and elements $\{d_m\}_{m \in K'} \subseteq \A$ such that the joint distributions of $\{D_m(N)\}_{m \in K'}$ with respect to $\frac{1}{N} \tr$ tend to the joint distribution of  $\{d_m\}_{m \in K'}$.  Often one takes $D_m(N)$ to be diagonal matrices as such matrices can approximate the distribution of any measure on $\bC$.

The following result is immediately obtain from the proof of \cite{NS2006}*{Theorem 22.35} by running said argument with pairs of Gaussian random matrices independent from one another instead of just independent Gaussian random matrices - something that had little value to write down in the past.  As the proof is modified simply by keeping track of cumulant terms from non-crossing pair partitions (e.g. see the proof of Theorem \ref{thm:bi-free-central-limit}), we omit the details.  For those unhappy with this omission, the results of Sections \ref{sec:fock} and \ref{sec:q-deformed} can be used to prove asymptotic bi-freeness from `constant diagonal matrices' (see Remark \ref{rem:q-matrix-results}).

\begin{thm}
For each $N \in \bN$ let $\{(Z_k(N), Z'_k(N))\}_{k \in K}$ be pairs of $N \times N$ Gaussian random matrices such that $\{Z_k(N), Z'_k(N)\}$ is independent from $\{Z_m(N), Z'_m(N)\}$ if $k \neq m$ and the covariance matrix of $(Z_k(N), Z'_k(N))$ is a positive, non-singular matrix $C_k$ with real entries.  For each $N \in \bN$ let $\{D_m(N)\}_{m \in K'} \subseteq \M_N(\bC)$ be as above.

The joint distribution of 
\[
\{(Z_k(N), Z'_k(N))\}_{k \in K} \cup \{D_m(N)\}_{m \in K'}
\]
with respect to $\frac{1}{N} \tr \circ E_{\L(\X_N)}$ tends to the joint distribution of 
\[
\{(s_k, s'_k)\}_{k \in K} \cup \{d_m\}_{m \in K'}
\]
where $\{(s_k, s'_k)\}_{k \in K}$ is free from $\{d_m\}_{m \in K'}$, $(s_k, s'_k)$ is independent from $(s_m, s'_m)$ if $k \neq m$, and $(s_k, s'_k)$ is a pair of self-adjoint semicircular variables with covariance matrix $C_k$.  Consequently, Theorem \ref{thm:ass-free-to-ass-bi-free} implies $\{(L(Z_k(N)), R(Z'_k(N)))\}_{k \in K}$ are asymptotically bi-free central limit distributions and are asymptotically bi-free from $(\{L(D_m(N))\}_{m \in K'}, \{R(D_m(N))\}_{m \in K'})$.
\end{thm}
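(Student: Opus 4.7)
The plan is to extend the standard proof that independent Gaussian random matrices are asymptotically free from constant matrices (e.g., \cite{NS2006}*{Theorem 22.35}) to the setting of Gaussian pairs with prescribed covariances, then feed the conclusion into Theorem \ref{thm:ass-free-to-ass-bi-free}. The first step is to fix $n \in \bN$, a word index $\chi : \{1,\ldots,n\}\to\{\ell,r\}$, a colouring $\epsilon$ into $K \sqcup K'$, and compute
\[
\frac{1}{N}\tr\!\left(E_{\L(\X_N)}\!\left(W_1(N)\cdots W_n(N)\right)\right),
\]
where each $W_j(N)$ is either $L(Z_{\epsilon(j)}(N))$, $R(Z'_{\epsilon(j)}(N))$, or one of $L(D_{\epsilon(j)}(N))$, $R(D_{\epsilon(j)}(N))$. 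Using Lemma \ref{lem:expanding-matrices} and Remark \ref{rem:indices-that-matter}, this quantity becomes a single sum over indices $j_1,\ldots,j_n$ of $\frac{1}{N^{1+n/2}}$ times a product of matrix entries, where the positions occupied by constant matrices are non-random.

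Next, apply Wick's formula (Theorem \ref{thm:Wick}) to the expectation over the Gaussian positions. Independence of the families $\{Z_k(N), Z'_k(N)\}$ across distinct $k \in K$ kills every pair partition $\pi$ that pairs two Gaussian positions of different colours $\epsilon$; the remaining terms pick up the appropriate entry $c^{\epsilon(x)}_{\chi(x),\chi(y)}$ of $C_{\epsilon(x)}$ together with the Kronecker structure $\delta_{j_{s_\chi^{-1}(x)},\, j_{s_\chi^{-1}(y)+1}}\delta_{j_{s_\chi^{-1}(y)},\, j_{s_\chi^{-1}(x)+1}}$. The remaining index-free entries (those for the constant $D_m(N)$'s) organise themselves, along the cycle structure dictated by $s_\chi$, into products of traces of words in the $D_m(N)$'s along the connected components of the "residual" graph obtained after contracting each pair of $\pi$. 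This is exactly the mechanism in \cite{NS2006}*{Theorem 22.35}, except we record one extra factor $c^{\epsilon(x)}_{\chi(x),\chi(y)}$ per pair and one extra colour matching $\delta_{\epsilon(x),\epsilon(y)}$.

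The standard genus-counting argument (\cite{NS2006}*{p.~365}, reused already in the proof of Theorem \ref{thm:bi-free-central-limit}) then shows that the only $\pi$ surviving in the limit $N \to \infty$ are those for which $s_\chi^{-1}\cdot\pi$ is a non-crossing pair partition — equivalently, $\pi \in BNC_2(\chi)$ — and moreover whose blocks do not cut any of the residual "constant" cycles. For every such $\pi$ the limit contribution is
\[
\left(\prod_{\{x,y\}\in\pi} \delta_{\epsilon(x),\epsilon(y)}\, c^{\epsilon(x)}_{\chi(x),\chi(y)}\right)\prod_{V} \lim_{N\to\infty}\frac{1}{N}\tr\!\left(D_{\epsilon(V)}(N)\right)^{|V|},
\]
where $V$ ranges over the cycles of constant matrices delimited by the pairing. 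This is precisely the joint moment of the target family $\{(s_k,s_k')\}_{k\in K}\cup\{d_m\}_{m\in K'}$, because (i) the free semicircular joint moments of the $(s_k,s_k')$ are given by sums over non-crossing pair partitions weighted by $C_k$, (ii) $\{(s_k,s_k')\}$ and $\{(s_{k'},s'_{k'})\}$ are classically independent for $k\neq k'$, and (iii) freeness of the Gaussian family from the constants yields the standard nested trace structure.

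Once asymptotic freeness of $\{Z_k(N),Z_k'(N)\}_{k\in K}$ from $\{D_m(N)\}_{m\in K'}$ (as ordinary matrices of operators) is established, the bi-free conclusion follows by invoking Theorem \ref{thm:ass-free-to-ass-bi-free}: all entries live in the commutative algebra $L_\infty(\Omega,\mu)$, so condition (1) of that theorem holds by Remark \ref{rem:commutative}, while condition (2) is used exactly as in the proof of Theorem \ref{thm:ass-free-to-ass-bi-free} (and its cited Example \ref{exam:poisson}) to transition right matrices to left matrices when acting on $I_{N,\xi}$, so the asymptotic freeness translates into asymptotic bi-freeness of the pairs. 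The main obstacle is purely notational: marshalling the indices $s_\chi^{-1}(x)$ arising from the twisted right action together with the coloured Wick pairings without losing track of which cycle each constant matrix feeds into. The genus argument itself is unchanged from \cite{NS2006}, which justifies the author's decision to omit the details.
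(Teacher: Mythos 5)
Your proposal is correct and follows essentially the same route as the paper, which itself gives no details beyond asserting that the result "is immediately obtained from the proof of \cite{NS2006}*{Theorem 22.35}" run with coloured, pairwise-correlated Gaussian matrices (tracking the extra factors $\delta_{\epsilon(x),\epsilon(y)}c^{\epsilon(x)}_{\chi(x),\chi(y)}$ per Wick pair, exactly as in the proof of Theorem \ref{thm:bi-free-central-limit}) and then invokes Theorem \ref{thm:ass-free-to-ass-bi-free} for the bi-free conclusion. The only blemish is notational: in your displayed limit the factor $\frac{1}{N}\tr\left(D_{\epsilon(V)}(N)\right)^{|V|}$ should be the normalized trace of the \emph{word} of (generally distinct) constant matrices read along each residual cycle $V$, as your surrounding prose correctly states.
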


For our final application of Theorem \ref{thm:ass-free-to-ass-bi-free}, we will examine Haar bi-unitary random pairs of matrices.  Recall from \cite{CNS2014-2}*{Section 10} that a pair $(U_\ell, U_r)$ in a non-commutative probability space $(\A, \varphi)$ is said to be a \emph{Haar bi-unitary} if the joint distribution of $(U_\ell, U_r)$ with respect to $\varphi$ is equal to the joint distribution of $(U, U)$ where $U$ is a Haar unitary in some non-commutative probability space.  Consequently, if $U(N) \in \M_N(L_\infty(\Omega, \mu))$ is a Haar unitary random matrix, we will call $(L(U(N)), R(U(N)))$ a \emph{Haar bi-unitary random pair of matrices}.

Based on definition and \cite{NS2006}*{Theorem 23.13}, we immediately obtain the following.
\begin{thm}
\label{thm:Haar}
For each $N \in \bN$ let $\{U_k(N)\}_{k \in K}$ be independent $N \times N$ Haar unitary random matrices and let $\{D_m(N)\}_{m \in K'} \subseteq \M_N(\bC)$ be such that for some $*$-non-commutative probability space $(\A, \varphi)$ and for some elements $\{d_m\}_{m \in K'} \subseteq \A$ the joint $*$-distributions of $\{D_m(N)\}_{m \in K'}$ with respect to $\frac{1}{N} \tr$ tend to the joint $*$-distribution of  $\{d_m\}_{m \in K'}$.  Then the joint $*$-distributions of 
\[
\{(L(U_k(N)), R(U_k(N)))\}_{k \in K} \cup (\{L(D_m(N))\}_{m \in K'}, \{R(D_m(N))\}_{m \in K'})
\]
with respect to $\frac{1}{N} \tr \circ E_{\L(\X_N)}$ tends to the joint $*$-distribution of the bi-free pairs of faces 
\[
\{(U_{\ell,k}, U_{r,k})\}_{k\in K} \cup     (\{L(d_m)\}_{m \in K'}, \{R(d_m)\}_{m \in K'})
\]
as $N \to \infty$, where each $(U_{\ell,k}, U_{r,k})$ is a Haar bi-unitary.  In particular, Haar bi-unitary random pair of matrices are asymptotically bi-free from constant matrices.
\end{thm}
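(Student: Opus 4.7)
The plan is to reduce to the classical asymptotic $*$-freeness of independent Haar unitary random matrices and constant matrices (\cite{NS2006}*{Theorem 23.13}) via the commutative-to-bi-free transfer principle of Theorem \ref{thm:ass-free-to-ass-bi-free}, in a manner parallel to the Gaussian cases already treated in this section.

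First I would verify the hypotheses of Theorem \ref{thm:ass-free-to-ass-bi-free}. Because every entry of $U_k(N)$, $U_k(N)^*$, $D_m(N)$, and $D_m(N)^*$ lies in the commutative algebra $L_\infty(\Omega,\mu)$, Remark \ref{rem:commutative} guarantees that all left matrices in the combined family commute with all right matrices, while Remark \ref{rem:transitioning-from-left-to-right} supplies the identity $L(Z(N)) I_{N,\xi} = R(Z(N)) I_{N,\xi}$ required by the second hypothesis. Although Theorem \ref{thm:ass-free-to-ass-bi-free} is stated for a partition of the index set into two pieces, its proof (which rests on the universal moment polynomials of Theorem \ref{thm:universal-moment-bi-free}) extends with obvious modifications to any finite partition and to $*$-distributions; since $L$ and $R$ are $*$-homomorphisms on the appropriate matrix algebras, the adjoint structure poses no new difficulty.

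Next I would invoke \cite{NS2006}*{Theorem 23.13}: the joint $*$-distributions of $\{U_k(N)\}_{k\in K}\cup\{D_m(N)\}_{m\in K'}$ with respect to $\frac{1}{N}\tr$ converge so that each $U_k(N)$ becomes Haar unitary, the family $\{D_m(N)\}_{m\in K'}$ converges jointly to $\{d_m\}_{m\in K'}$, and the $*$-algebras generated by each of these families are asymptotically free. Applying the extended Theorem \ref{thm:ass-free-to-ass-bi-free} to the partition of $K\sqcup\{*\}$ consisting of singletons $\{k\}$ for $k\in K$ plus one piece $\{*\}$ labelling the constants then produces the asserted asymptotic bi-freeness of $\{(L(U_k(N)), R(U_k(N)))\}_{k\in K}$ among themselves and from $(\{L(D_m(N))\}_{m\in K'}, \{R(D_m(N))\}_{m\in K'})$ with respect to $\frac{1}{N}\tr\circ E_{\L(\X_N)}$.

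Finally I would identify each marginal limit. For the constants, Remark \ref{rem:distributions-of-matrices-depend-only-on-original-state} immediately gives convergence of the joint $*$-distribution of $(\{L(D_m(N))\}_{m\in K'}, \{R(D_m(N))\}_{m\in K'})$ to that of $(\{L(d_m)\}_{m\in K'}, \{R(d_m)\}_{m\in K'})$. For each $k$, to show $(L(U_k(N)), R(U_k(N)))$ is asymptotically a Haar bi-unitary, I would imitate the reduction in Example \ref{exam:poisson}: commutativity lets us move every right factor past every left factor in any $*$-word, the unitary relations collapse the left block and the right block into $L(U_k(N)^a)$ and $R(U_k(N)^b)$ for integers $a,b$, and iterated use of $R(Z) I_{N,\xi}=L(Z) I_{N,\xi}$ converts the composite expression evaluated on $I_{N,\xi}$ into $L(U_k(N)^{a+b}) I_{N,\xi}$; applying $\frac{1}{N}\tr\circ p_{\X_N}$ reduces the $*$-moment to $\frac{1}{N}\tr(E(U_k(N)^{a+b}))$, which by \cite{NS2006}*{Theorem 23.13} converges to the Haar unitary moment $\varphi(U^{a+b})$, matching the definition of a Haar bi-unitary. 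The main obstacle will be carefully upgrading Theorem \ref{thm:ass-free-to-ass-bi-free} to $*$-distributions and to finitely many pieces, together with the bookkeeping of adjoints and unitarity relations needed to make the reduction step entirely rigorous.
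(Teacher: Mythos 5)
Your proposal is correct and follows essentially the same route the paper intends: the paper offers no written proof but states that the result follows ``immediately'' from the definition of a Haar bi-unitary, \cite{NS2006}*{Theorem 23.13}, and Theorem \ref{thm:ass-free-to-ass-bi-free}, which is exactly the reduction you carry out, with the marginal identification via commutativity and $R(Z)I_{N,\xi}=L(Z)I_{N,\xi}$ filling in what the paper leaves implicit. Your observation that Theorem \ref{thm:ass-free-to-ass-bi-free} as stated must be upgraded to $*$-distributions and to a partition into more than two pieces is a fair and accurate caveat that the paper silently assumes.
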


\begin{rem}
Recall \cite{CNS2014-2}*{Theorem 10.1.3} shows that if $(C, D)$ is a pair of algebras that is bi-free from a Haar bi-unitary $(U_\ell, U_r)$ in a non-commutative probability space $(\A, \varphi)$, then $(C, D)$ and $(U_\ell C U_\ell^*, U_r D U_r^*)$ are bi-free pairs of faces.  In particular, combining \cite{CNS2014-2}*{Theorem 10.1.3} with Theorem \ref{thm:Haar}, by conjugating constant matrices by Haar bi-unitary random pairs of matrices, one can obtain many bi-free joint distributions.
\end{rem}

\section{Bi-Matrix Models with Fock Space Entries}
\label{sec:fock}

In this section, a bi-matrix model involving left and right creation and annihilation operators on a Fock space will be examined.  The results of this section generalize those of \cite{S1997}*{Section 5} to the bi-free setting and provide bi-matrix models for all bi-free central limit distributions.

Throughout this section let $\X = \F(\H)$, namely the Fock space of a Hilbert space $\H$ of sufficiently large size.   If $\Omega$ is the vacuum vector of $\F(\H)$ then $p_\X : \X \to \bC$ is defined by $p_\X( \lambda \Omega \oplus \eta) = \lambda$.  Furthermore $\varphi_0 := \varphi_\X : \L(\X) \to \bC$ is defined by $\varphi_0(T) = \langle T \Omega, \Omega \rangle$ (all inner products linear in the first entry) thereby making $(\L(\X), \varphi_0)$ a non-commutative probability space.

Given an element $h \in \H$, let $l(h)$ and $r(h)$ denote the left and right creation operators by $h$ respectively and let $l^*(h) = (l(h))^*$ and $r^*(h) = (r(h))^*$ be the corresponding annihilation operators.  Then:
\begin{align}
l^*(h_1)l(h_2) &= \langle h_2, h_1\rangle I_{\X},\label{eq:fock-left-identity}\\
r^*(h_1)r(h_2) &= \langle h_2, h_1\rangle I_{\X}, \label{eq:fock-right-identity} \\
[l^*(h_1), r(h_2)] &= l^*(h_1) r(h_2) - r(h_2) l^*(h_1) = \langle h_2, h_1 \rangle P_\Omega,\label{eq:fock-left-right-*} \\
[r^*(h_1), l(h_2)] &= r^*(h_1) l(h_2) - l(h_2) r^*(h_1) = \langle h_2, h_1 \rangle P_\Omega, \label{eq:fock-right-left-*} \text{ and} \\
[l(h_1), r(h_2)] &= [l^*(h_1), r^*(h_2)] = 0, \label{eq:fock-commuting}
\end{align}
where $P_\Omega \in \L(\X)$ is the projection onto the vacuum vector.  Furthermore, for all $T, S \in \L(\X)$, 
\[
\varphi(TP_\Omega S) = \langle T P_\Omega S \Omega, \Omega \rangle = \varphi(S) \langle T \Omega, \Omega\rangle = \varphi(T) \varphi(S).
\]

 Using notation and conventions from Construction \ref{cons:twisting-action}, we have the following which is the bi-free analogue of part of \cite{S1997}*{Theorem 5.2}.
\begin{thm}
\label{thm:matrix-*-distributions}
Given an index set $K$, an $N \in \bN$, and an orthonormal set of vectors $\{h^k_{i,j} \, \mid \, i,j \in \{1, \ldots, N\}, k \in K\} \subseteq \H$ consider the $N\times N$ left and right matrices of $\L(\X_N)$ defined for each $k \in K$ by
\begin{align*}
L_k :=  \frac{1}{\sqrt{N}}L\left( \left[ l(h^k_{i,j})\right] \right), \quad L^*_k := \frac{1}{\sqrt{N}} L\left( \left[ l^* (h^k_{j,i})\right] \right) \quad
R_k := \frac{1}{\sqrt{N}}  R\left( \left[r(h^k_{i,j})\right] \right), \quad R^*_k := \frac{1}{\sqrt{N}} R\left( \left[ r^*(h^k_{j,i})\right] \right).
\end{align*}
Let $\Phi = \frac{1}{N} \tr \circ E_{\L(\X_N)}$.  Then
\begin{enumerate}
\item $L^*_m L_k = \delta_{k,m} I_{\X_N}$, $R^*_m R_k= \delta_{k,m} I_{\X_N}$, $[L_m, R_k] = [L^*_m, R^*_k] = 0$, $[L^*_m, R_k] = \delta_{k,m} P_0$, and $[R^*_m, L_k] = \delta_{k,m} P_0$ where $P_0 \in \L(\X_N)$ is the linear map
\[
P_0([\xi_{i,j}]) = \frac{1}{N} \diag( P_\Omega( \tr([\xi_{i,j}])), \ldots, P_\Omega( \tr([\xi_{i,j}]))).
\] \label{part:mm-1}
\item $P_0^2 = P_0$.\label{part:mm-2}
\item $\Phi(T P_0 S) = \Phi(T)\Phi(S)$ for all $T, S \in \L(\X_N)$. \label{part:mm-3}
\item The joint distribution of $\{L_k, L^*_k, R_k, R^*_k\}_{k \in K}$ with respect to $\Phi$ is equal the joint distribution of $\{l(h^k), l^*(h^k), r(h^k), r^*(h^k)\}_{k \in K}$ with respect to $\varphi_0$  where $\{h^k\}_{k \in K} \subseteq \H$ is an orthonormal set.\label{part:mm-5}
\end{enumerate}
\end{thm}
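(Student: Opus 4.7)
The plan is to establish parts (1), (2), (3) by direct algebraic computation using the Fock space relations (\ref{eq:fock-left-identity})--(\ref{eq:fock-commuting}), and then bootstrap part (4) from them together with the easy identity $L^*_k I_{N,\Omega} = R^*_k I_{N,\Omega} = 0$ (which is immediate from $l^*(h^k_{j,i})\Omega = r^*(h^k_{j,i})\Omega = 0$ combined with the action formulas of Construction \ref{cons:twisting-action}).

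For part (1), the same-side identities $L^*_m L_k = R^*_m R_k = \delta_{k,m} I_{\X_N}$ are straightforward. Since $L$ is a homomorphism under ordinary matrix multiplication, the $(i,j)$ entry of $L^*_m L_k$ equals
\[
\frac{1}{N}\sum_p l^*(h^m_{p,i}) l(h^k_{p,j}) = \frac{1}{N}\sum_p \langle h^k_{p,j}, h^m_{p,i}\rangle I_\X = \delta_{k,m}\delta_{i,j}I_\X
\]
by (\ref{eq:fock-left-identity}) and orthonormality; the case of $R^*_m R_k$ is parallel using (\ref{eq:fock-right-identity}) and the twisted multiplication $R(A)R(B)$ (whose $(i,j)$-entry is $\sum_l A_{l,j}B_{i,l}$). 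The commutators $[L_m, R_k] = [L^*_m, R^*_k] = 0$ follow from Remark \ref{rem:commutative} since the matrix entries commute by (\ref{eq:fock-commuting}). For the crucial cross commutators, I would unfold both orderings on $[\eta_{i,j}]$ directly from the action formulas: after relabeling dummy indices, the $(i,j)$ entry of $[L^*_m, R_k][\eta]$ becomes
\[
\frac{1}{N}\sum_{p,q} [l^*(h^m_{p,i}), r(h^k_{q,j})](\eta_{p,q}) = \frac{1}{N}\sum_{p,q} \delta_{k,m}\delta_{i,j}\delta_{p,q} P_\Omega(\eta_{p,q}) = \delta_{k,m}\delta_{i,j}\cdot\tfrac{1}{N} P_\Omega(\tr([\eta_{i,j}]))
\]
by (\ref{eq:fock-left-right-*}), which is exactly $\delta_{k,m}(P_0[\eta])_{i,j}$; the analogue for $[R^*_m, L_k]$ uses (\ref{eq:fock-right-left-*}).

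Part (2) is elementary: the $(i,j)$ entry of $P_0[\eta]$ is $\delta_{i,j}\cdot\tfrac{1}{N}P_\Omega(\tr([\eta_{i,j}]))$, hence $\tr(P_0[\eta]) = P_\Omega(\tr([\eta_{i,j}]))$, and the idempotence of $P_\Omega$ collapses the second application. For part (3), the key observation is that $P_0(X) = \tfrac{1}{N} p_\X(\tr(X)) \cdot I_{N,\Omega}$ for every $X \in \X_N$ (since $P_\Omega(\eta) = p_\X(\eta)\Omega$). Combined with the identity $\tfrac{1}{N} p_\X(\tr(S I_{N,\Omega})) = \tfrac{1}{N}\tr(E_{\L(\X_N)}(S)) = \Phi(S)$, we obtain $P_0(S I_{N,\Omega}) = \Phi(S) \cdot I_{N,\Omega}$, whence $\Phi(T P_0 S) = \Phi(S)\cdot \Phi(T)$.

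For part (4), I would argue by induction on word length. Both families satisfy structurally identical algebraic relations: the bi-matrix family has (1)--(3) plus the annihilation $L^*_k I_{N,\Omega} = R^*_k I_{N,\Omega} = 0$, while the Fock family has (\ref{eq:fock-left-identity})--(\ref{eq:fock-commuting}), $P_\Omega^2 = P_\Omega$, $\varphi_0(T P_\Omega S) = \varphi_0(T)\varphi_0(S)$, and $l^*(h^k)\Omega = r^*(h^k)\Omega = 0$. Given a word $W$, locate the rightmost adjoint operator: if it sits at the end, the moment vanishes on both sides. Otherwise, commute it one step to the right using part (1) (respectively (\ref{eq:fock-left-identity})--(\ref{eq:fock-right-left-*})); each step either collapses an adjacent $L^*_m L_k$ or $R^*_m R_k$ to $\delta_{k,m}$ times a word of length $n-2$, or produces a cross delta-term $\delta_{k,m} P_0$ in the interior, which by part (3) (respectively $\varphi_0(T P_\Omega S) = \varphi_0(T)\varphi_0(S)$) factors the moment as a product of two moments of strictly smaller total length. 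Since the reduction produces identical scalar coefficients on both sides and the base cases agree, $\Phi(W) = \varphi_0(W')$ follows. The main obstacle is the bookkeeping of the reduction, especially ensuring that cross commutators are tracked correctly, but the proof is in essence the bi-free shadow of \cite{S1997}*{Theorem 5.2} once the correct projection $P_0$ is identified in part (1).
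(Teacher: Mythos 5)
Your computations for parts (1)--(3) follow the same direct approach as the paper and are correct; the only cosmetic difference is that you unfold $[L^*_m, R_k]$ directly, whereas the paper works out $[R^*_m, L_k]$ and asserts the other follows by symmetry. For part (4), your route is genuinely different from the paper's. The paper runs a double induction on (number of right operators, word length): the rightmost right operator is commuted rightward, absorbing $P_0$-corrections via part (3), until it reaches the end of the word, where it is traded for the corresponding left operator via $R^g_k I_{N,\Omega} = L^g_k I_{N,\Omega}$, and the base case of left-only words is outsourced to \cite{S1997}*{Theorem 5.2}. Your proposal instead tracks the rightmost \emph{annihilation} operator and pushes it to the right: a same-side encounter collapses to $\delta_{k,m}$ and a shorter word, an opposite-side encounter commutes at the cost of a $\delta_{k,m}P_0$-correction (which factorizes by part (3)), and an annihilation operator that reaches the right end kills $I_{N,\Omega}$ (resp.\ $\Omega$) so both moments vanish. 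This treats left-only words on the same footing as mixed words, so your argument is self-contained and does not need Shlyakhtenko's theorem as a black box --- a small but real gain. Two bookkeeping gaps to close: (i) ``induction on word length'' alone does not terminate, because the opposite-side commutation step preserves length while only advancing the position of the rightmost annihilation operator; you need a lexicographic induction on, say, (length, distance of the rightmost annihilation from the end of the word). (ii) You should also dispose of the edge case of a nonempty word containing no annihilation operators at all: such a product of creations sends $I_{N,\Omega}$ (resp.\ $\Omega$) into a graded component killed by $p_{\X_N}$ (resp.\ $p_\X$), so both moments are zero. With these two points made precise, your argument is complete.
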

\begin{proof}
For (\ref{part:mm-1}), notice
\begin{align*}
L_m^* L_k  = \frac{1}{N} L\left(  \left[  \sum^N_{x=1}   l^*(h^m_{x,i})  l(h^k_{x, j})                \right]          \right) 
  =\frac{1}{N} \delta_{k,m} L\left(  \left[  \sum^N_{x=1}  \delta_{i,j} \right]          \right) 
 = \delta_{k,m}  L(I_{N, \L(\X)}) = \delta_{k,m} I_{\X_N},
\end{align*}
as $L$ is a homomorphism and
\begin{align*}
R_m^* R_k =  \frac{1}{N}  R\left(   \left[ \sum^N_{y=1}   r^*(h^m_{j,y}) r(h^k_{i,y})   \right]    \right)=  \frac{1}{N} \delta_{k,m}  R\left(  \left[  \sum^N_{y=1}   \delta_{i,j} \right]          \right)=  \delta_{k,m}  R(I_{N, \L(\X)}) = \delta_{k,m}  I_{\X_N}
\end{align*}
as $R$ is a homomorphism on $\M_N(\L(\X)^{\op})^{\op}$.  Furthermore equation (\ref{eq:fock-commuting}) together with Remark \ref{rem:commutative} implies that  $[L_m, R_k] = [L^*_m, R^*_k] = 0$.

Since
\begin{align*}
L_m R^*_k [\xi_{i,j}] &= \frac{1}{\sqrt{N}}L_m \left(\left[ \sum^N_{y=1}  r^*(h^k_{j,y}) \xi_{i,y} \right]     \right) = \frac{1}{N} \left[\sum^N_{x,y=1}   l(h^m_{i,x}) r^*(h^k_{j,y}) \xi_{x,y}      \right],
\end{align*}
and since, by equation (\ref{eq:fock-right-left-*}),
\begin{align*}
R^*_k L_m [\xi_{i,j}] = \frac{1}{\sqrt{N}} R^*_k \left( \left[    \sum^N_{x=1}   l(h^m_{i,x}) \xi_{x,j} \right] \right) &= \frac{1}{N}\left[    \sum^N_{x,y=1}         r^*(h^k_{j,y}) l(h^m_{i,x}) \xi_{x,y}    \right] \\
&= \frac{1}{N} \left[    \sum^N_{x,y=1}       \left(    l(h^m_{i,x}) r^*(h^k_{j,y}) + \langle  h^m_{i,x}, h^k_{j,y}\rangle P_\Omega     \right) \xi_{x,y}    \right] \\
&= L_m R^*_k [\xi_{i,j}] + \frac{1}{N}\left[    \sum^N_{x,y=1}       \langle  h^m_{i,x}, h^k_{j,y}\rangle P_\Omega\xi_{x,y}    \right] \\
&= L_m R^*_k [\xi_{i,j}] + \frac{1}{N}\delta_{k,m}  \left[    \sum^N_{x=1}   \delta_{i,j} P_\Omega \xi_{x,x}    \right] \\
&= L_m R^*_k [\xi_{i,j}] + \delta_{k,m}  P_0 [\xi_{i,j}],
\end{align*}
we obtain that $[L^*_m, R_k] = \delta_{k,m} P_0$.  Similar computations show $[R^*_m, L_k] = \delta_{k,m} P_0$ thereby completing part (\ref{part:mm-1}).

Part (\ref{part:mm-2}) is a trivial computation using the fact that $P_\Omega^2 = P_\Omega$.  To see part (\ref{part:mm-3}),  write $S (I_{N, \Omega}) = [\xi_{i,j}]$.  Then
\[
\Phi(S) = \frac{1}{N} \sum^N_{y=1} p_\X(\xi_{y,y})
\]
so
\[
P_0 S (I_{N, \Omega}) = \frac{1}{N} \diag\left( P_\Omega \left( \sum^N_{y=1} \xi_{y,y}\right), \ldots, P_\Omega \left( \sum^N_{y=1} \xi_{y,y}\right)  \right) = \Phi(S) I_{N, \Omega}.
\]
Hence
\[
\Phi(T P_0 S) = p_{\X_N}(TP_0 S (I_{N, \Omega})) = \Phi(S) p_{\X_N}(T I_{N, \Omega}) = \Phi(S) \Phi(T).
\]

For part (\ref{part:mm-5}), it must be shown that given any word $W$ in $\{L_k, L^*_k, R_k, R^*_k\}_{k \in K}$ we have $\Phi(W) = \varphi_0(w)$ where $w$ is the corresponding word in $\left\{l(h^k), l^*(h^k), r(h^k), r^*(h^k)\right\}_{k \in K}$.  Note \cite{S1997}*{Theorem 5.2} completes the claim if no $R_k$ nor $R^*_k$ appear in $W$.  Thus we  proceed by an induction argument using parts (\ref{part:mm-1}) and (\ref{part:mm-3}).

Suppose it has been demonstrated that $\Phi(W) = \varphi_0(w)$ for all words $W$ with at most $n\geq 0$ occurrences of $\{R_k, R^*_k\}_{k \in K}$ and all words $W$ with $n+1$ occurrences $\{R_k, R^*_k\}_{k \in K}$ and length at most $m \geq 0$.  Let $W$ be a word with $n+1$ occurrences of $\{R_k, R^*_k\}_{k \in K}$ and length $m+1$.  

If $W = V R^g_k$ for some word $V$ of length one less than the length of $W$ and some $g \in \{\cdot, *\}$, then, since $R^g_k I_{N, \Omega} = L^g_k I_{N, \Omega}$ (see Remark \ref{rem:transitioning-from-left-to-right}), we obtain that
\[
\Phi(W) = \Phi\left(V R^g_k\right) = \Phi\left(V L^g_k\right).
\]
If $w = v r^g_k$ is the corresponding word in $\left\{l(h^k), l^*(h^k), r(h^k), r^*(h^k)\right\}_{k \in K}$, then since $r^g_k \Omega = l^g_k \Omega$, we obtain that
\[
\varphi_0(w) = \varphi_0\left(v r^g_k\right) = \varphi_0\left(v l^g_k\right).
\]
Thus the inductive hypotheses then implies that $\Phi(W) = \varphi_0(w)$.  

Otherwise we may decompose $W = V_1 R_k^{g_1} L_m^{g_2} V_2$ for some $g_1, g_2 \in\{1, \ast\}$ and some words $V_1, V_2$ with $V_2$ a word in $\{L_k, L_k^*\}_{k \in K}$.  We will now demonstrate a process to reduce the length of $L_m^{g_2} V_2$.  Write $w = v_1 r_k^{g_1} l_m^{g_2} v_2$.  If $g_1 = g_2$ or $g_1 \neq g_2$ and $k \neq m$, then, by part (\ref{part:mm-1}) along with equations (\ref{eq:fock-left-right-*}), (\ref{eq:fock-right-left-*}), and (\ref{eq:fock-commuting}), we obtain
\[
\Phi(W) = \Phi\left( V_1  L_m^{g_2} R_k^{g_1}V_2\right) \qand \varphi_0(w) = \varphi_0\left(v_1  l_m^{g_2} r_k^{g_1} v_2\right).
\]
Otherwise, if $g_1 = *$, $g_2 = 1$, and $k = m$, then
\[
\Phi(W) = \Phi\left(V_1 L_m^{g_2} R_k^{g_1}V_2\right) + \Phi(V_1 P_0 V_2) = \Phi\left(V_1 L_m^{g_2} R_k^{g_1}V_2\right) + \Phi(V_1)\Phi( V_2)
\]
by parts (\ref{part:mm-1}) and (\ref{part:mm-3}) whereas
\[
\varphi_0(w) = \varphi_0\left(v_1  l_m^{g_2} r_k^{g_1} v_2\right) + \varphi_0(v_1  P_\Omega v_2) = \varphi_0\left(v_1  l_m^{g_2} r_k^{g_1} v_2\right) + \varphi_0(v_1) \varphi_0(v_2)
\]
by equation (\ref{eq:fock-right-left-*}).  As similar expressions hold for $g_1 = *$, $g_2 = 1$, and $k = m$ (changing $+$ to $-$), by the inductive hypotheses we obtain that $\Phi(W) = \varphi_0(w)$ provided $\Phi\left(V_1  L_m^{g_2} R_k^{g_1}V_2\right) = \varphi_0\left(v_1  l_m^{g_2} r_k^{g_1} v_2\right)$.   By repeating this process, we eventually reduce the word $W$ into one ending with $R_k^{g_1}$ thereby completing the inductive step by previous arguments.
\end{proof}

\begin{rem}
As all bi-free central limit distributions may be obtained using left and right creation and annihilation operators on a Fock space by \cite{V2014}*{Theorem 7.4}, Theorem \ref{thm:matrix-*-distributions} may be used to construct bi-matrix analogues of all bi-free central limit distributions by taking suitable linear combinations.
\end{rem}

With Theorem \ref{thm:matrix-*-distributions} complete, we now demonstrate the bi-free analogue of the second half of \cite{S1997}*{Theorem 5.2}.
\begin{thm}
\label{thm:asymptotic-bi-free-fock}
With the notation in Theorem \ref{thm:matrix-*-distributions}, 
\[
(\{L_k, L^*_k\}_{k \in K} , \{R_k, R^*_k\}_{k \in K}) \qand (L(\M_N(\bC)), R(\M_N(\bC)^{\op}))
\]
are bi-free with respect to $\Phi$.
\end{thm}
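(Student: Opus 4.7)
My plan is to verify the universal bi-free moment polynomial in Theorem \ref{thm:universal-moment-bi-free} applied to the two pairs of faces. By that theorem, it suffices to check equation (\ref{eq:universal-polys}) on generators: $\{L_k, L_k^*, R_k, R_k^*\}_{k \in K}$ for the Fock-space pair, together with $L(\M_N(\bC))$ and $R(\M_N(\bC)^{\op})$ for the scalar-matrix pair.

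Several structural facts will drive the argument. By Remark \ref{rem:commutative}, each scalar left matrix $L(A)$ commutes with every right matrix (Fock-space or scalar), and symmetrically for each $R(B)$, since the entries of the scalar matrices are scalar multiples of $I_\X$. Combined with $[L_m, R_k] = [L_m^*, R_k^*] = 0$ from Theorem \ref{thm:matrix-*-distributions}(1), this allows substantial reordering of any mixed word. The relations $L_m^* L_k = \delta_{k,m} I_{\X_N}$, $R_m^* R_k = \delta_{k,m} I_{\X_N}$, $[L_m^*, R_k] = [R_m^*, L_k] = \delta_{k,m} P_0$, and the factorization $\Phi(T P_0 S) = \Phi(T) \Phi(S)$ from Theorem \ref{thm:matrix-*-distributions}(1)--(3) together supply a Wick-style contraction mechanism, and Theorem \ref{thm:matrix-*-distributions}(5) already identifies the joint distribution of the Fock-space matrices alone.

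The argument will then proceed by induction on word length, extending the proof of Theorem \ref{thm:matrix-*-distributions}(5) to accommodate scalar matrices. Starting with a word in the generators aligned with maps $\chi$ and $\epsilon$, I would push each annihilation operator inward to meet a creation operator, producing three kinds of terms: a contraction that shortens the word by two, a $P_0$-term that splits the moment via factorization, or a passage across a scalar matrix (which commutes freely when on the opposite side, and is otherwise tracked explicitly through the matrix coordinates). Remark \ref{rem:transitioning-from-left-to-right} is used to convert between left and right evaluations on $I_{N,\xi}$, paralleling its role in Theorem \ref{thm:matrix-*-distributions}(5). The recursion should terminate with an expression indexed by bi-non-crossing pair partitions of the Fock positions, with scalar-matrix traces attached to the intervening positions.

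The main obstacle will be the combinatorial book-keeping required to match the resulting expression with the right-hand side of (\ref{eq:universal-polys}). Specifically, one must verify that the bi-non-crossing pair partitions arising from the Wick contractions correspond exactly to the partitions $\pi \in BNC(\chi)$ contributing to the universal polynomial, and that the scalar-matrix traces sandwiched between contracted Fock blocks reproduce the coefficient $\sum_{\pi \leq \sigma \leq \epsilon} \mu_{BNC}(\pi, \sigma)$. Since the joint moments of $(L(\M_N(\bC)), R(\M_N(\bC)^{\op}))$ are expressible through the trace structure of $\M_N(\bC)$ under left and right multiplication, this matching should reduce to a direct lattice-theoretic verification using the multiplicative properties of $\mu_{BNC}$ inherited from $\mu_{NC}$.
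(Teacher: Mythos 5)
Your overall framework is the right one: reduce to checking the universal bi-free moment polynomial~(\ref{eq:universal-polys}) on generators (via Theorem~\ref{thm:universal-moment-bi-free}), and exploit the commutation and $P_0$-factorization relations from Theorem~\ref{thm:matrix-*-distributions}(1)--(3). But the inductive mechanism you propose diverges from the paper's, and the divergence matters.

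The paper does \emph{not} push annihilators inward to meet creators. Its induction is on the number of right operators $|\chi^{-1}(\{r\})|$ (with a secondary induction on word length), and the operative move is to take the bottom-most right operator at some position $x$ and swap it past the left operator at position $x+1$. For each such swap one checks \emph{term by term} that both sides of~(\ref{eq:universal-polys}) are preserved: if the two operators commute, the bijection $BNC(\chi)\to BNC(\chi')$ interchanging $x$ and $x+1$ preserves the inner sums of $\mu_{BNC}$; if instead the commutator is $\delta_{k,m}P_0$, the extra $\Phi(V_1)\Phi(V_2)$-term matches precisely the contribution of partitions with $x\sim_\pi x+1$ via the bijection $\pi\mapsto(\pi|_{\{1,\dots,x\}},\pi|_{\{x+1,\dots,n\}})$ and multiplicativity of $\mu_{BNC}$. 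Eventually the right operator reaches the end, where $R^g_k I_{N,\Omega}=L^g_k I_{N,\Omega}$ and $R([a_{i,j}])I_{N,\Omega}=L([a_{i,j}])I_{N,\Omega}$ drop $|\chi^{-1}(\{r\})|$ by one, and the base case $|\chi^{-1}(\{r\})|=0$ is exactly Shlyakhtenko's \cite{S1997}*{Theorem 5.2} for left matrices alone. Nowhere in this induction is the contraction $L_m^*L_k=\delta_{k,m}I_{\X_N}$ used; that relation is only implicit in the free base case.

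Your Wick-contraction plan ("push each annihilation operator inward," "a contraction that shortens the word by two") would effectively reprove the free case from scratch and, more importantly, forces you to track how a Fock annihilator passes through intervening scalar left matrices before meeting its creator. Scalar left matrices do \emph{not} commute with Fock left matrices ($L([a_{i,j}])L_k\ne L_kL([a_{i,j}])$, since the Hilbert space index $h^k_{i,j}$ and the matrix index are coupled), so "tracked explicitly through the matrix coordinates" is precisely the hard part. You correctly identify this as the main obstacle, but the proposal stops there: the claim that matching the resulting expression with the right-hand side of~(\ref{eq:universal-polys}) "should reduce to a direct lattice-theoretic verification" is the content of the theorem, not a routine reduction. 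In the paper this verification is carried out locally at each swap, which is what makes the argument close; your global contraction approach would require an explicit identification of which pair-partition/interval configurations arise and a nontrivial recomputation of the $\sum_{\pi\le\sigma\le\epsilon}\mu_{BNC}(\pi,\sigma)$ coefficients, and none of that is supplied. So the proposal, while pointed in the right direction, has a genuine gap at exactly the step where the work lives.
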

\begin{proof}
To prove the claim, it suffices by Theorem \ref{thm:universal-moment-bi-free} to show that the universal bi-free moment polynomials from equation (\ref{eq:universal-polys}) are satisfied; that is, we must demonstrate that
\begin{align}
\Phi\left(Z^{\chi(1), \epsilon(1)}_{k_1} \cdots Z^{\chi(n), \epsilon(n)}_{k_n}\right) = \sum_{ \pi \in BNC(\chi) } \left[ \sum_{\substack{\sigma \in BNC(\chi)  \\ \pi \leq \sigma \leq \epsilon }} \mu_{BNC}(\pi, \sigma)  \right] \Phi_\pi\left(Z^{\chi(1), \epsilon(1)}_{k_1}, \ldots, Z^{\chi(n), \epsilon(n)}_{k_n} \right) \label{eq:up-to-verify}
\end{align}
for all $\chi : \{1, \ldots, n\} \to \{\ell, r\}$ and $\epsilon : \{1,\ldots, n\} \to \{1,2\}$ where
\[
Z^{\ell, 1}_j \in \{L_k, L^*_k, I_{\X_N}\}_{k \in K}, \quad Z^{r, 1}_j  \in \{R_k, R^*_k, I_{\X_N}\}_{k \in K}, \quad Z^{\ell, 2}_j \in L(\M_N(\bC)), \quad Z^{r, 2}_j  \in R(\M_N(\bC)^{\op}).
\]

Suppose equality has been demonstrated in equation (\ref{eq:up-to-verify}) for all $\chi : \{1, \ldots, n'\} \to \{\ell, r\}$ and $\epsilon : \{1,\ldots, n'\} \to \{1,2\}$ with $|\chi^{-1}(\{r\})| \leq m$ and for all $\chi : \{1, \ldots, n'\} \to \{\ell, r\}$ and $\epsilon : \{1,\ldots, n'\} \to \{1,2\}$ with $|\chi^{-1}(\{r\})| = m+1$ and $n' < n$.  Note that the base case $m = 0$ follows from \cite{S1997}*{Theorem 5.2}.
Fix $\chi : \{1, \ldots, n\} \to \{\ell, r\}$ and $\epsilon : \{1,\ldots, n\} \to \{1,2\}$ with $|\chi^{-1}(\{r\})|= m+1$.  

If $\chi(n) = r$, let $\chi' : \{1,\ldots, n\} \to \{\ell, r\}$ be defined by $\chi'(n) = \ell$ and $\chi'(y) = \chi(y)$ for all $y \neq n$.  Since
\[
R_k I_{N, \Omega} = L_k I_{N, \Omega}, \quad R^*_k I_{N, \Omega} = L^*_k I_{N, \Omega}, \qand R([a_{i,j}]) I_{N, \Omega} = L([a_{i,j}]) I_{N, \Omega},
\]
by replacing $Z^{\chi(n), \epsilon(n)}_{k_n}$ with $Z^{\chi'(n), \epsilon(n)}_{k_n}$, we obtain
\begin{align*}
\Phi\left(Z^{\chi(1), \epsilon(1)}_{k_1} \cdots Z^{\chi(n), \epsilon(n)}_{k_n}\right) &= \Phi\left(Z^{\chi'(1), \epsilon(1)}_{k_1} \cdots Z^{\chi'(n), \epsilon(n)}_{k_n} \right) \text{ and}\\
\Phi_\pi\left(Z^{\chi(1), \epsilon(1)}_{k_1}, \ldots, Z^{\chi(n), \epsilon(n)}_{k_n}\right) &= \Phi_\pi\left(Z^{\chi'(1), \epsilon(1)}_{k_1}, \ldots, Z^{\chi'(n), \epsilon(n)}_{k_n} \right),
\end{align*}
where $\pi \in BNC(\chi)$ is automatically an element of $BNC(\chi')$ as moving the bottom node from the right side to the left side is a bijection from $BNC(\chi)$ to $BNC(\chi')$.  Since this bijection preserves the coefficient 
\[
\sum_{\substack{\sigma \in BNC(\chi)  \\ \pi \leq \sigma \leq \epsilon }} \mu_{BNC}(\pi, \sigma),
\]
due to the relation between $\mu_{BNC}$ and $\mu_{NC}$ (see comment after Definition \ref{defn:mobius}), we obtain both sides of equation (\ref{eq:up-to-verify}) are preserved under this operation.  Consequently, equation (\ref{eq:up-to-verify}) holds in this case by the inductive hypothesis.

Otherwise, we may select $x \in \{1,\ldots, n\}$ such that $\chi(x) = r$ yet $\chi(y) = \ell$ for all $y > x$.  Let $\chi' : \{1,\ldots, n\} \to \{\ell, r\}$ be defined by $\chi'(x) = \ell$, $\chi'(x+1) = r$, and $\chi'(y) = \chi(y)$ for all $y \neq x,x+1$.  Similarly, let $\epsilon' : \{1,\ldots, n\} \to \{1,2\}$ be defined by $\epsilon'(x) = \epsilon(x+1)$, $\epsilon'(x+1) = \epsilon(x)$, and $\epsilon'(y) = \epsilon(y)$ for all $y \neq x,x+1$, and let
\[
Z^{\chi'(x), \epsilon'(x)}_{k'_x} = Z^{\chi(x+1), \epsilon(x+1)}_{k_{x+1}}, \quad Z^{\chi'(x+1), \epsilon'(x+1)}_{k'_{x+1}} = Z^{\chi(x), \epsilon(x)}_{k_{x}}, \qand Z^{\chi'(y), \epsilon'(y)}_{k'_y} = Z^{\chi(y), \epsilon(y)}_{k_{y}}
\]
for all $y \neq x, x+1$.  Note there is a bijection from $BNC(\chi)$ to $BNC(\chi')$ obtained by sending an element $\pi \in BNC(\chi)$ to $\pi' \in BNC(\chi')$ where $\pi'$ is obtained from $\pi$ by interchanging $x$ and $x+1$.

If 
\[
\left[ Z^{\chi(x), \epsilon(x)}_{k_{x}}, Z^{\chi(x+1), \epsilon(x+1)}_{k_{x+1}} \right] = 0
\]
then for all $\pi\in BNC(\chi)$
\begin{align*}
\Phi\left(Z^{\chi(1), \epsilon(1)}_{k_1} \cdots Z^{\chi(n), \epsilon(n)}_{k_n}\right) &= \Phi\left(Z^{\chi'(1), \epsilon'(1)}_{k'_1} \cdots Z^{\chi'(n), \epsilon'(n)}_{k'_n}\right) \text{ and} \\
\Phi_\pi\left(Z^{\chi(1), \epsilon(1)}_{k_1}, \ldots, Z^{\chi(n), \epsilon(n)}_{k_n}\right) &= \Phi_{\pi'}\left(Z^{\chi'(1), \epsilon'(1)}_{k'_1}, \ldots, Z^{\chi'(n), \epsilon'(n)}_{k'_n}\right)
\end{align*}
where the second equality holds for all $\pi$ as either $x$ and $x+1$ are in the same block of $\pi$ and the corresponding operators commute, or are in different blocks in which case the equality is trivial.  Since 
\[
\sum_{\substack{\sigma \in BNC(\chi)  \\ \pi \leq \sigma \leq \epsilon }} \mu_{BNC}(\pi, \sigma) = \sum_{\substack{\sigma' \in BNC(\chi')  \\ \pi' \leq \sigma' \leq \epsilon' }} \mu_{BNC}(\pi', \sigma')
\]
due to the connection between $\mu_{BNC}$ and $\mu_{NC}$, both sides of equation (\ref{eq:up-to-verify}) are preserved under this operation in this case.  Otherwise if
\[
\left[ Z^{\chi(x), \epsilon(x)}_{k_{x}}, Z^{\chi(x+1), \epsilon(x+1)}_{k_{x+1}} \right] \neq 0
\]
then we must be in the case
\[
Z^{\chi(x), \epsilon(x)}_{k_{x}} = R_k \qqand Z^{\chi(x+1), \epsilon(x+1)}_{k_{x+1}} = L^*_k
\]
or the case
\[
Z^{\chi(x), \epsilon(x)}_{k_{x}} = R^*_k \qqand Z^{\chi(x+1), \epsilon(x+1)}_{k_{x+1}} = L_k
\]
for some $k \in K$.  In the first case, we obtain that
\begin{align*}
\Phi\left(Z^{\chi(1), \epsilon(1)}_{k_1} \cdots Z^{\chi(n), \epsilon(n)}_{k_n}\right) 
&= \Phi\left(Z^{\chi'(1), \epsilon'(1)}_{k'_1} \cdots Z^{\chi'(n), \epsilon'(n)}_{k'_n}\right)  \\
& \quad + \Phi\left(Z^{\chi(1), \epsilon(1)}_{k_1} \cdots Z^{\chi(x-1), \epsilon(x-1)}_{k_{x-1}} I_{\X_N}\right) \Phi\left( I_{\X_N} Z^{\chi(x+2), \epsilon(x+2)}_{k_{x+2}} \cdots Z^{\chi(n), \epsilon(n)}_{k_n}\right)
\end{align*}
by parts (\ref{part:mm-1}) and (\ref{part:mm-3}) of Theorem \ref{thm:matrix-*-distributions}.  If $x \nsim_\pi x+1$, then we trivially obtain that
\[
\Phi_\pi\left(Z^{\chi(1), \epsilon(1)}_{k_1}, \ldots, Z^{\chi(n), \epsilon(n)}_{k_n}\right) = \Phi_{\pi'}\left(Z^{\chi'(1), \epsilon'(1)}_{k'_1}, \ldots, Z^{\chi'(n), \epsilon'(n)}_{k'_n}\right).
\]
However, if $x \sim_\pi x+1$, then 
\begin{align*}
&\Phi_\pi \left(Z^{\chi(1), \epsilon(1)}_{k_1} , \ldots, Z^{\chi(n), \epsilon(n)}_{k_n}\right)\\
 &= \Phi_{\pi'}\left(Z^{\chi'(1), \epsilon'(1)}_{k'_1}, \ldots, Z^{\chi'(n), \epsilon'(n)}_{k'_n}\right)  \\
& \qquad + \Phi_{\pi_1}\left(Z^{\chi(1), \epsilon(1)}_{k_1} , \ldots, Z^{\chi(x-1), \epsilon(x-1)}_{k_{x-1}}, I_{\X_N}\right) \Phi_{\pi_2}\left(I_{\X_N}, Z^{\chi(x+2), \epsilon(x+2)}_{k_{x+2}} , \ldots, Z^{\chi(n), \epsilon(n)}_{k_n}\right)
\end{align*}
where $\pi_1 = \pi|_{\{1,\ldots, x\}}$ (remove all of $x+1, \ldots, n$ otherwise keeping the blocks the same) and $\pi_2 = \pi|_{\{x+1,\ldots, n\}}$.
Note the map taking $\pi \in BNC(\chi)$ with $x \sim_\pi x+1$ to $(\pi_1, \pi_2) \in BNC(\chi|_{\{1,\ldots, x\}}) \times BNC(\chi|_{\{x+1, \ldots, n\}})$ is a bijection such that 
\[
\sum_{\substack{\sigma \in BNC(\chi)  \\ \pi \leq \sigma \leq \epsilon }} \mu_{BNC}(\pi, \sigma) = \left[ \sum_{\substack{\sigma_1 \in BNC(\chi|_{\{1,\ldots, x\}})  \\ \pi_1 \leq \sigma_1 \leq \epsilon|_{\{1,\ldots, x\}}}} \mu_{BNC}(\pi_1, \sigma_1)  \right]\left[ \sum_{\substack{\sigma_2 \in BNC(\chi|_{\{x+1,\ldots, n\}})  \\ \pi_2 \leq \sigma_2 \leq \epsilon|_{\{x+1,\ldots, n\}}}} \mu_{BNC}(\pi_2, \sigma_2)  \right]
\]
where $\sigma_1 = \sigma|_{\{1,\ldots, x\}}$  and $\sigma_2 = \sigma|_{\{x+1,\ldots, n\}}$.  Indeed the above equality can immediately be obtained using Definition \ref{defn:mobius} (which shows $\mu_{BNC}$ is completely determined by the lattice structure) followed by the multiplicative properties of the M\"{o}bius function to obtain 
\[
\mu_{BNC}(\pi, \sigma) = \mu_{BNC}(\pi_1 \cup \pi_2, \sigma_1 \cup \sigma_2) = \mu_{BNC}(\pi_1, \sigma_1) \mu_{BNC}(\pi_2, \sigma_2).
\]
Since, the inductive hypothesis implies
\begin{align*}
\Phi&\left(Z^{\chi(1), \epsilon(1)}_{k_1} \cdots Z^{\chi(x-1), \epsilon(x-1)}_{k_{x-1}} I_{\X_N}\right) \Phi\left( I_{\X_N} Z^{\chi(x+2), \epsilon(x+2)}_{k_{x+2}} \cdots Z^{\chi(n), \epsilon(n)}_{k_n}\right) \\
&= \sum_{\substack{ \pi_1 \in BNC(\chi|_{\{1,\ldots, x\}}) \\ \pi_2 \in BNC(\chi|_{\{x+1, \ldots, n\}})   }} \left[ \sum_{\substack{\sigma_1 \in BNC(\chi|_{\{1,\ldots, x\}})  \\ \pi_1 \leq \sigma_1 \leq \epsilon|_{\{1,\ldots, x\}}}} \mu_{BNC}(\pi_1, \sigma_1)  \right]\left[ \sum_{\substack{\sigma_2 \in BNC(\chi|_{\{x+1,\ldots, n\}})  \\ \pi_2 \leq \sigma_2 \leq \epsilon|_{\{x+1,\ldots, n\}}}} \mu_{BNC}(\pi_2, \sigma_2)  \right] \cdot\\
& \qquad \Phi_{\pi_1}\left(Z^{\chi(1), \epsilon(1)}_{k_1}, \ldots, Z^{\chi(x-1), \epsilon(x-1)}_{k_{x-1}}, I_{\X_N}\right) \Phi_{\pi_2}\left(I_{\X_N}, Z^{\chi(x+2), \epsilon(x+2)}_{k_{x+2}}, \ldots, Z^{\chi(n), \epsilon(n)}_{k_n}\right),
\end{align*}
we obtain that equation (\ref{eq:up-to-verify}) holds for the sequence $\left(Z^{\chi(1), \epsilon(1)}_{k_1}, \ldots, Z^{\chi(n), \epsilon(n)}_{k_n}\right) $ if and only if equation (\ref{eq:up-to-verify}) holds for the sequence $\left(Z^{\chi'(1), \epsilon'(1)}_{k'_1} \cdots Z^{\chi'(n), \epsilon'(n)}_{k'_n}\right)$ provided
\[
Z^{\chi(x), \epsilon(x)}_{k_{x}} = R_k \qqand Z^{\chi(x+1), \epsilon(x+1)}_{k_{x+1}} = L^*_k.
\]
The same holds by similar arguments when 
\[
Z^{\chi(x), \epsilon(x)}_{k_{x}} = R^*_k \qqand Z^{\chi(x+1), \epsilon(x+1)}_{k_{x+1}} = L_k.
\]

By repeating these arguments of interchanging left and right operators (which preserve both sides of equation (\ref{eq:up-to-verify})) we eventually reach the case that $\chi(n) = r$.  As a previous case implies equation (\ref{eq:up-to-verify}) holds when $\chi(n) = r$, equation (\ref{eq:up-to-verify})  holds.   Thus the result follows by the Principle of Mathematical Induction.
\end{proof}

\section{Bi-Matrix Models with $q$-Deformed Fock Space Entries}
\label{sec:q-deformed}

In this section, we will demonstrate the bi-free analogue of \cite{S1995}*{Theorem 2.1} by determining the asymptotic distributions of left and right matrices containing creation and annihilation operators on $q$-deformed Fock spaces.  Although this section is long and technical, it can be summarized via two points:
\begin{enumerate}
\item If $\{h_k\}_{k \in K}$ is an orthonormal set in a Hilbert space $\H$, then the two-faced families 
\[
\{(\{l(h_k), l^*(h_k)\}, \{r(h_k), r^*(h_k)\})\}_{k \in K}
\]
are bi-free and a bi-free central limit distribution by \cite{V2014}*{Theorem 7.4}.  Thus the joint distributions are completely determined by bi-non-crossing pair partitions where the $*$-terms precede the non-$*$-terms and the pairs must be of the same $k \in K$ colour.
\item The proof of \cite{S1995}*{Theorem 2.1} directly generalizes once the permutation $s_\chi$ is applied in the appropriate manner.
\end{enumerate}
As a consequence of Theorem \ref{thm:q-deformed}, Theorem \ref{thm:ass-bi-free} is automatically obtained via the $q= 1$ case and many implications from \cite{S1995} immediately carry forward to the bi-free setting.

We begin with definitions.
Let $\H$ be a (finite dimensional) Hilbert space.  Recall, for $q \in [-1,1]$, the $q$-deformed Fock space $\F_q(\H)$ is the Hilbert space
\[
\F_q(\H) = \bC \Omega_q \oplus \left(\bigoplus_{n \geq 1} \H^{\otimes n} \right)
\]
equipped with the inner product
\[
\langle g_1 \otimes \cdots \otimes g_n, h_1 \otimes \cdots \otimes h_m\rangle_q = \delta_{n,m} \sum_{\sigma \in S_n} q^{\mathrm{inv}(\sigma)} \prod^N_{k=1} \langle g_k, h_{\sigma(k)}\rangle_\H
\]
where $S_n$ is the permutation group on $\{1,\ldots, n\}$ and 
\[
\mathrm{inv}(\sigma) = |\{(i,j) \in \{1,\ldots, n\}^2 \, \mid \, i < j, \sigma(i) > \sigma(j)\} |
\]
is the number of inversions in $\sigma$.  We define the vacuum state $\varphi_q : \L(\F_q(\H)) \to \bC$ by
\[
\varphi_q(T) = \langle T \Omega_q, \Omega_q\rangle_q.
\]
For this section $\X$ will denote $\F_q(\H)$.

For each $h \in \H$, there are left (right) creation and annihilation operators, denoted $a(h)$ and $a^*(h)$ ($b(h)$ and $b^*(h)$) respectively, which are defined by
\begin{align*}
a(h) \Omega_q &= h, \qquad a^*(h) \Omega_q = 0, \\
b(h) \Omega_q &= h, \qquad b^*(h) \Omega_q = 0, \\
a(h) (h_1 \otimes \cdots \otimes h_n) &= h \otimes h_1 \otimes \cdots \otimes h_n, \\
b(h) (h_1 \otimes\cdots \otimes  h_n) &= h_1 \otimes \cdots \otimes h_n \otimes h, \\
a^*(h) (h_1 \otimes\cdots \otimes  h_n) &= \sum^n_{k=1} q^{k-1} \langle h_k, h\rangle h_1 \otimes \cdots \otimes \check{h_k} \otimes \cdots \otimes h_n, \text{ and }\\
b^*(h) (h_1 \otimes\cdots \otimes  h_n) &= \sum^n_{k=1} q^{n-k} \langle h_k, h\rangle h_1 \otimes \cdots \otimes \check{h_k}\otimes \cdots \otimes h_n,
\end{align*}
where $\check{h_k}$ denotes $h_k$ is omitted in the tensor.  Note $(a(h))^* = a^*(h)$ and $(b(h))^* = b^*(h)$. Furthermore, it is possible to verify the relations
\begin{align}
[a(h_1), b(h_2)] &= 0,\label{eq:q-comm-1} \\
[a^*(h_1), b^*(h_2)] &= 0, \label{eq:q-comm-2} \\
[a^*(h_1), b(h_2)] &= \langle h_2, h_1\rangle \left( \sum_{n\geq 0} q^n P_n\right), \label{eq:q-comm-3}\\
[b^*(h_1), a(h_2)] &= \langle h_2, h_1\rangle \left( \sum_{n\geq 0} q^n P_n\right), \label{eq:q-comm-4} \\
a^*(h_1)a(h_2) - q a(h_2) a^*(h_1) &= \langle h_2, h_1\rangle I_{\F_q(\H)}, \text{ and } \label{eq:q-comm-5} \\
b^*(h_1)b(h_2) - q b(h_2) b^*(h_1) &= \langle h_2, h_1\rangle I_{\F_q(\H)}, \label{eq:q-comm-6}
\end{align}
where $P_n \in \L(\X)$ is the projection onto the tensors of length $n$.

Let $N \in \bN$ and let $K$ be a fixed set.  Suppose $\left\{h_{i,j}^k \, \mid \, i,j \in \{1,\ldots, N\}, k \in K\right\}$ is an orthonormal set in $\H$.  For $\theta \in \{\ell, r\}$, define
\begin{align*}
Z^{\theta, \cdot}_k(N) &= \frac{1}{\sqrt{N}} Z^\theta\left(\left[z^\theta(h^k_{i,j})\right]\right) \qquad
&Z^{\theta, *}_k(N) &= \frac{1}{\sqrt{N}} Z^\theta\left(\left[\left(z^\theta\right)^*\left(h^k_{j,k}\right)\right]\right) \\
Z^{\theta, t}_k(N) &= \frac{1}{\sqrt{N}} Z^\theta\left(\left[z^\theta(h^k_{j,i})\right]\right) \qquad
&Z^{\theta, t*}_k(N) &= \frac{1}{\sqrt{N}} Z^\theta\left(\left[\left(z^\theta\right)^*(h^k_{i,j})\right]\right) 
\end{align*}
where, symbolically, $Z^\ell = L$, $Z^r = R$, $z^\ell = a$, and $z^r = b$.

\begin{thm}
\label{thm:q-deformed}
The asymptotic distribution of 
\[
\left\{Z^{\theta, \cdot}_k(N), Z^{\theta, *}_k(N), Z^{\theta, t}_k(N), Z^{\theta, t*}_k(N) \, \mid \, k \in K, \theta \in \{\ell, r\}\right\}
\]
with respect to $\Psi_N := \frac{1}{N} \tr \circ E_{\L(\X_N)}$ is equal to the distribution of 
\[
\left\{z^{\theta, \cdot}_k, z^{\theta, *}_k, z^{\theta, t}_k, z^{\theta, t*}_k \, \mid \, k \in K, \theta \in \{\ell, r\}\right\}
\]
with respect to $\varphi_0$, where $z_0^\ell = l$, $z_0^r = r$, $z^{\theta, \cdot}_k = z^\theta_0(h_k)$, $z^{\theta, *}_k = (z^\theta_0)^*(h_k)$, $z^{\theta, t}_k = z_0^\theta (h^t_k)$, $z^{\theta, t*}_k = (z_0^\theta)^* (h^t_k)$, and $\{h_k, h^t_k \, \mid \, k \in K\}$ is an orthonormal set.  Furthermore, the difference in the distributions of a fixed word is $O(\frac{1}{N})$ as $N \to \infty$.
\end{thm}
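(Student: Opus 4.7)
The plan is to adapt the argument of \cite{S1995}*{Theorem 2.1} by tracking how the permutation $s_\chi$ reshapes the combinatorial data produced by the twisted right action. Fix a word $W = Z^{\theta_1, g_1}_{k_1}(N) \cdots Z^{\theta_n, g_n}_{k_n}(N)$ and set $\chi(m) = \theta_m$. The first step is to use Lemma \ref{lem:expanding-matrices} together with Remark \ref{rem:indices-that-matter} to unfold
\[
\Psi_N(W) = \frac{1}{N^{n/2+1}} \sum_{\vec{\imath}\,\in\{1,\ldots,N\}^n} \varphi_q\!\left( z^{\theta_1,g_1}\!\left(h^{k_1}_{\alpha_1(\vec{\imath}),\beta_1(\vec{\imath})}\right) \cdots z^{\theta_n,g_n}\!\left(h^{k_n}_{\alpha_n(\vec{\imath}),\beta_n(\vec{\imath})}\right)\right),
\]
where the matrix-index functions $\alpha_m, \beta_m$ are determined by the adjacent pair $\bigl(i_{s_\chi^{-1}(m)}, i_{s_\chi^{-1}(m)+1}\bigr)$ (read cyclically), possibly swapped when $g_m$ carries a transpose decoration, and $z^{\theta}$ stands for $a$ or $b$ with or without a $*$ according to $g_m$.

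The next step applies the $q$-Wick formula for the vacuum state on $\F_q(\H)$: the inner $\varphi_q$ equals $\sum_{\pi \in \P_2(n)} q^{\mathrm{cr}(\pi)}\prod_{\{x,y\}\in\pi}(\text{pair factor})$, where the pair factor involves an inner product of basis vectors and is non-zero only when $\{x,y\}$ has one $*$- and one non-$*$-entry, matching colour $k_x = k_y$, and compatible $t$-decoration. Orthonormality of $\{h^k_{i,j}\}$ then turns each surviving pair factor into a Kronecker delta identifying $(\alpha_x,\beta_x)$ with $(\alpha_y,\beta_y)$ (possibly transposed).

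The essential combinatorial step is the index-counting. For a fixed pair partition $\pi$, one must count the $\vec{\imath}$ satisfying both (a) the orthonormality constraints from $\pi$ and (b) the cyclic trace identifications from Remark \ref{rem:indices-that-matter}. After relabeling the index vector via $s_\chi^{-1}$, so that the trace reads $\vec{\imath}$ in the order $s_\chi(1) \prec_\chi \cdots \prec_\chi s_\chi(n)$, these become exactly the constraints handled in \cite{S1995}; Sniady's genus/graph bound gives at most $N^{n/2+1}$ admissible $\vec{\imath}$, with equality if and only if $s_\chi^{-1}\cdot\pi$ is non-crossing, i.e.\ $\pi \in BNC(\chi)$. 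For such $\pi$ one has $\mathrm{cr}(\pi) = 0$, so the $q$-weight collapses to $1$ and the contribution is exactly one per bi-non-crossing pairing; every other $\pi$ contributes $O(1/N)$, uniformly in $\pi$.

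Putting this together, $\Psi_N(W)$ equals a sum over pairings in $BNC_2(\chi)$ that respect the colour and decoration pattern, up to an $O(1/N)$ error. By \cite{V2014}*{Theorem 7.4} this sum is precisely $\varphi_0$ evaluated on the corresponding word in the limit operators (creation and annihilation operators on the ordinary Fock space indexed by the orthonormal set $\{h_k, h^t_k\}_{k \in K}$), giving both the asymptotic distributional identity and the rate. I expect the main obstacle to be step (b): one must verify that the twisted right action, when combined with $\tr$, produces a cyclic index identification that, after pulling back through $s_\chi^{-1}$, is genuinely cyclic in the natural order so that Sniady's counting lemma applies verbatim. Remark \ref{rem:indices-that-matter} is the technical device that effects this translation, but carrying the bookkeeping through for the combined side/$t$-decorations, and verifying that the $O(1/N)$ remainder bound is uniform across all pair partitions and choices of decorations, is the delicate point.
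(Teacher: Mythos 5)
Your outline matches the skeleton of the paper's argument: expand $\Psi_N(W)$ via Lemma \ref{lem:expanding-matrices} and Remark \ref{rem:indices-that-matter}, reduce to pair partitions, count indices using the polygon/quotient-graph method of \cite{S1995}*{Lemmata 2.2, 2.3}, and observe that only bi-non-crossing pairings with the correct $*$-orientation survive. However, the step where you invoke ``the $q$-Wick formula $\varphi_q(\cdots)=\sum_{\pi\in\P_2(n)}q^{\mathrm{cr}(\pi)}\prod(\text{pair factor})$'' and later conclude that ``for such $\pi$ one has $\mathrm{cr}(\pi)=0$, so the $q$-weight collapses to $1$'' contains a genuine error. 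A bi-non-crossing pair partition need not be non-crossing in the standard order: with $\chi=(\ell,r,\ell,r)$ one has $s_\chi=(1,3,4,2)$, and the pair partition $\pi=\{\{1,3\},\{2,4\}\}$ satisfies $s_\chi^{-1}\cdot\pi=\{\{1,2\},\{3,4\}\}$, which is non-crossing, so $\pi\in BNC(\chi)$; yet $\mathrm{cr}(\pi)=1$. Moreover, there is no $q$-Wick formula with weight $q^{\mathrm{cr}(\pi)}$ for mixed words in $a,a^*,b,b^*$: direct computation on $\F_q(\H)$ gives $\varphi_q\bigl(a^*(h_1)b^*(h_2)a(h_1)b(h_2)\bigr)=1$, whereas the only colour-compatible pair partition is $\{\{1,3\},\{2,4\}\}$, which under your formula would contribute $q^1\neq 1$. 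If you wanted to pursue this route you would need a mixed $q$-Wick formula with the $s_\chi$-adapted crossing statistic $\mathrm{cr}(s_\chi^{-1}\cdot\pi)$, and proving that is nontrivial additional work not available off the shelf.

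The paper avoids this obstacle entirely. Instead of a $q$-Wick expansion it uses two lighter devices: (i) a gauge-invariance argument --- replacing $h^k_{i,j}$ by $zh^k_{i,j}$ with $|z|=1$ leaves each term unchanged, and linearity of $h\mapsto a(h)$, $h\mapsto b(h)$ then forces $\varphi_q$ of the inner word to vanish unless a matching pair partition exists; and (ii) for each surviving $\pi$ that is bi-non-crossing and satisfies the orientation condition (\ref{part:orientation}), a diagram-manipulation using only the commutation relations (\ref{eq:q-comm-1})--(\ref{eq:q-comm-4}) rewrites the word as a product of factors each of which fixes $\Omega_q$, so the term equals exactly $1$ with no residual $q$-dependence at all. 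Together with the polygon vertex-count this gives $|D'_\pi|\cdot N^{-n/2-1}\to 1$ for the unique admissible $\pi$ and $O(1/N)$ otherwise. You should either adopt this gauge-invariance-plus-direct-reduction argument, or, if you insist on a $q$-Wick route, first establish the correct mixed formula with the $\chi$-relabeled crossing number --- the latter being precisely the ``delicate bookkeeping'' you flagged, and not something that can be passed off by citing the one-sided $q$-Wick formula.
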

\begin{proof}
This proof is obtained by modifying the proof of \cite{S1995}*{Theorem 2.1}.  As such, many details are omitted by referring to \cite{S1995}.

It suffices to show that if $\chi : \{1,\ldots, n\} \to \{\ell, r\}$, $g : \{1,\ldots, n\} \to \{\cdot, *, t, t*\}$, and $k_1, \ldots k_n \in K$, then
\begin{align}
\lim_{N \to \infty} \Psi_N\left(Z^{\chi(1), g(1)}_{k_1}(N) \cdots Z^{\chi(n), g(n)}_{k_n}(N)\right) = \varphi_0\left( z^{\chi(1), g(1)}_{k_1} \cdots z^{\chi(n), g(n)}_{k_n}\right). \label{eq:to-show}
\end{align}
Recall 
\[
\left(  \left\{z^{\ell, \cdot}_k, z^{\ell, *}_k, z^{\ell, t}_k, z^{\ell, t*}_k \, \mid \, k \in K \right\}, \left\{z^{r, \cdot}_k, z^{r, *}_k, z^{r, t}_k, z^{r, t*}_k \, \mid \, k \in K \right\} \right)
\]
is a bi-free central limit distribution by \cite{V2014}*{Theorem 7.4} with
\[
\kappa\left(z^{\theta, w}_{k}, z^{\theta', w'}_{k'} \right) = 0
\]
unless $k = k'$ and either $w = *$ and $w' = \cdot$ or $w = t*$ and $w' = t$ (in which case it is equal to 1).   Consequently $\varphi_0 \left( z^{\chi(1), g(1)}_{k_1} \cdots z^{\chi(n), g(n)}_{k_n} \right) = 0$ unless there exists a $\pi\in BNC_2(\chi)$ such that 
\begin{enumerate}[(a)]
\item if $x \sim_\pi y$ then $k_x = k_y$, and \label{part:colour}
\item if $x \sim_\pi y$ with $x < y$, then either $g(x) = *$ and $g(y) = \cdot$, or $g(x) = t*$ and $g(y) = t$.  \label{part:orientation}
\end{enumerate} 
If such a $\pi$ exists, then $\varphi_0 \left( z^{\chi(1), g(1)}_{k_1} \cdots z^{\chi(n), g(n)}_{k_n}\right) = 1$ by the moment-cumulant formulae of Section \ref{sec:background}.

By Lemma \ref{lem:expanding-matrices} and Remark \ref{rem:indices-that-matter}, we obtain the left-hand-side of equation (\ref{eq:to-show}) is
\begin{align}
\frac{1}{N^{\frac{n}{2} + 1}} \sum^N_{j_1, \ldots, j_n = 1} \varphi_q\left(z_1\left(j_{s^{-1}_\chi(1)}, j_{s^{-1}_\chi(1) + 1}; k_1\right)z_2\left(j_{s^{-1}_\chi(2)}, j_{s^{-1}_\chi(2) + 1}; k_2\right) \cdots z_n\left(j_{s^{-1}_\chi(n)}, j_{s^{-1}_\chi(n) + 1}; k_n\right) \right) \label{eq:matrix-expanded}
\end{align}
where
\[
z_m(i, j; k) = \left\{
\begin{array}{ll}
a(h^k_{i,j}) & \mbox{if } g(m) = \cdot \\
a^*(h^k_{j,i}) & \mbox{if } g(m) = * \\
a(h^k_{j,i}) & \mbox{if } g(m) = t \\
a^*(h^k_{i,j}) & \mbox{if } g(m) = t*
\end{array} \right. \text{ when }\chi(m) = \ell
\]
and
\[
z_m(i, j; k) = \left\{
\begin{array}{ll}
b(h^k_{i,j}) & \mbox{if } g(m) = \cdot \\
b^*(h^k_{j,i}) & \mbox{if } g(m) = * \\
b(h^k_{j,i}) & \mbox{if } g(m) = t \\
b^*(h^k_{i,j}) & \mbox{if } g(m) = t*
\end{array} \right. \text{ when }\chi(m) = r.
\]

Notice that if $h^k_{i,j}$ is replaced by $z h^k_{i,j}$ for some $z \in \bC$ with $|z| = 1$, the distribution of any term in expression (\ref{eq:matrix-expanded}) is not changed.  Therefore, since $h \mapsto a(h)$ and $h \mapsto b(h)$ are linear in $h$, we obtain that 
\[
\varphi_q\left(z_1\left(j_{s^{-1}_\chi(1)}, j_{s^{-1}_\chi(1) + 1}; k_1\right) \cdots z_n\left(j_{s^{-1}_\chi(n)}, j_{s^{-1}_\chi(n) + 1}; k_n\right) \right)= 0
\]
unless there is a $\pi \in \P_2(n)$ such that 
\begin{enumerate}
\item $x \sim_\pi y$ implies $k_x = k_y$, and \label{part:colour2}
\item $x \sim_\pi y$ and $x \neq y$ implies that the set $\{g(x), g(y)\}$ is one $\{\ast, \cdot\}, \{\ast, t\}, \{t \ast, \cdot\}, \{t \ast, t\}$.\label{part:orientation2}
\end{enumerate}
Moreover, for such a partition $\pi$, we have that if $x \sim_\pi y$ and $x \neq y$ then
\begin{align}
j_{s^{-1}_\chi(x)} = j_{s^{-1}_\chi(y) + 1} \text{ and } j_{s^{-1}_\chi(x)+1} = j_{s^{-1}_\chi(y)} & \quad \text{if } \{g(x), g(y)\} = \{\ast, \cdot\} \text{ or }\{t\ast, t\} \label{part:first-j-relation}  \\
j_{s^{-1}_\chi(x)} = j_{s^{-1}_\chi(y)} \text{ and } j_{s^{-1}_\chi(x)+1} = j_{s^{-1}_\chi(y) + 1} & \quad \text{otherwise}. \label{part:second-j-relation}
\end{align}

Denote by $\Theta$ the set of all partitions satisfying (\ref{part:colour2}) and (\ref{part:orientation2}).  Then expression (\ref{eq:matrix-expanded}) equals
\[
\frac{1}{N^{\frac{n}{2} + 1}} \sum^N_{\substack{ j_1, \ldots, j_s = 1   \\ j_k \text{ satisfy } (\ref{part:first-j-relation}), (\ref{part:second-j-relation}) \text{ for some } \pi \in \Theta  }} \varphi_q\left(z_1\left(j_{s^{-1}_\chi(1)}, j_{s^{-1}_\chi(1) + 1}; k_1\right) \cdots z_n\left(j_{s^{-1}_\chi(n)}, j_{s^{-1}_\chi(n) + 1}; k_n\right) \right).
\]
The sum is zero if $\Theta$ is empty, which occurs when $n$ is odd.  Since $\varphi_0\left( z^{\chi(1), g(1)}_{k_1} \cdots z^{\chi(n), g(n)}_{k_n}\right) = 0$  by above discussions when $n$ is odd, we may assume $n$ is even for the remainder of the proof.

As in \cite{S1995}, let $C$ be a polygon with $n$ edges and let 
\[
D = \{\text{maps from the vertices on }C\text{ to }\{1,\ldots, N\}\}.
\]
Enumerate the vertices of $C$ clockwise.  Then the tuples $\{(j_1, \ldots, j_n) \, \mid \, j_k \in \{1,\ldots, N\}\}$ can be identified with elements of $D$ in the trivial way where $j_k$ is the value of an element of $D$ on the $k^{\mathrm{th}}$ vertex.  Orient the edges of $C$ via $1 \to 2 \to \cdots \to n \to 1$ and label the edge from $k$ to $k+1$ (where $n+1 = 1$) with $s_\chi(k)$.  Thus $\pi$ can be viewed as a partition on the set of edges of $C$ labelled in this way.  Consider the quotient graph $C/\pi$, where two edges $x$ and $y$ with $x \neq y$ and $x \sim_\pi y$ are ``glued'' together with orientation reversion if $\{g(x), g(y) \} = \{\ast, \cdot\}$ or $\{t\ast, t\}$ and with orientation preservation otherwise.  Hence if $s_\chi(x) \sim_\pi s_\chi(y)$ then the vertices $j_x$ and $j_{y+1}$ pair up and the vertices $j_{x+1}$ and $j_{y}$ pair up  if orientation is reversed and the vertices $j_x$ and $j_{y}$ pair up and the vertices $j_{x+1}$ and $j_{y+1}$ pair up  if orientation is preserved.  Note there is a clear injection $i$ from 
\[
D_\pi :=\{\text{maps from the vertices on }C/\pi\text{ to }\{1,\ldots, N\}\}
\]
into $D$ such that $(j_1, \ldots, j_n)$ satisfies (\ref{part:first-j-relation}) and (\ref{part:second-j-relation}) if and only if $(j_1, \ldots, j_n)$ lies in the image of $i : D_\pi \to D$.
Using this notation, expression (\ref{eq:matrix-expanded}) now becomes
\[
\frac{1}{N^{\frac{n}{2} + 1}} \sum_{\substack{ (j_1, \ldots, j_n) \in i(D_\pi) \\ \text{for some }\pi \in \Theta}} \varphi_q\left(z_1\left(j_{s^{-1}_\chi(1)}, j_{s^{-1}_\chi(1) + 1}; k_1\right) \cdots z_n\left(j_{s^{-1}_\chi(n)}, j_{s^{-1}_\chi(n) + 1}; k_n\right) \right).
\]

As in \cite{S1995}, the number of elements of $D_\pi$ that are not injective is of order at most $O(N^{\frac{n}{2}})$.  Therefore, if $D'_\pi$ is the subset of $D_\pi$ consisting of injective functions then, modulo terms of $O(\frac{1}{N})$, the expression (\ref{eq:matrix-expanded}) equals
\[
\frac{1}{N^{\frac{n}{2} + 1}} \sum_{\substack{ (j_1, \ldots, j_n) \in i(D'_\pi) \\ \text{for some }\pi \in \Theta}} \varphi_q\left(z_1\left(j_{s^{-1}_\chi(1)}, j_{s^{-1}_\chi(1) + 1}; k_1\right) \cdots z_n\left(j_{s^{-1}_\chi(n)}, j_{s^{-1}_\chi(n) + 1}; k_n\right) \right).
\]

If $\pi$ satisfies (\ref{part:colour2}) and (\ref{part:orientation2}) and $C/\pi$ has at most $\frac{n}{2}$ vertices, then $|D_\pi| = O(N^\frac{n}{2})$ so such $\pi$ may be ignored asymptotically in the sum.  As $C$ has $n$ edges, $C/\pi$ has at most $\frac{n}{2} + 1$ vertices.  By \cite{S1995}*{Lemmata 2.2, 2.3}, such a $C/\pi$ has precisely $\frac{n}{2} + 1$ vertices if and only if $\pi$ non-crossing on $(s_\chi(1), \ldots, s_\chi(n))$ (with the nodes in that order) and the edges of $C$ were glued together only using orientation reversion.  Hence $\pi \in BNC(\chi)$ and we need only consider $\pi$ satisfying (\ref{part:first-j-relation}) everywhere.

Let $\Theta'$ denote the subset of $\Theta$ consisting of $\pi \in BNC(\chi)$ satisfying (\ref{part:first-j-relation}) everywhere.  It is possible to verify that $|i(D'_\pi) \cap i(D'_{\pi'})| = O(N^\frac{n}{2})$ whenever $\pi, \pi' \in \Theta'$ are such that $\pi \neq \pi'$.  Thus, up to $O(\frac{1}{N})$, expression (\ref{eq:matrix-expanded}) equals
\[
\sum_{\pi \in \Theta'} \frac{1}{N^{\frac{n}{2} + 1}} \sum_{(j_1, \ldots, j_n) \in i(D'_\pi)} \varphi_q\left(z_1\left(j_{s^{-1}_\chi(1)}, j_{s^{-1}_\chi(1) + 1}; k_1\right) \cdots z_n\left(j_{s^{-1}_\chi(n)}, j_{s^{-1}_\chi(n) + 1}; k_n\right) \right).
\]
Again, an empty sum is zero by definition.

If $\pi \in \Theta'$ is such that there exists $x \sim_\pi y$ with $x < y$ and $g(x) = \cdot$ (so $g(y) = \ast$), then we claim that
\[
\sum_{(j_1, \ldots, j_n) \in i(D'_\pi)} \varphi_q\left(z_1\left(j_{s^{-1}_\chi(1)}, j_{s^{-1}_\chi(1) + 1}; k_1\right) \cdots z_n\left(j_{s^{-1}_\chi(n)}, j_{s^{-1}_\chi(n) + 1}; k_n\right) \right) = 0.
\]
To see this, we recall that $D'_\pi$ consists of injective elements so the set $\left\{j_{s^{-1}_\chi(x)}, j_{s^{-1}_\chi(x)+1}\right\}$ is not equal to any set $\left\{j_{s^{-1}_\chi(m)}, j_{s^{-1}_\chi(m)+1}\right\}$ for $m \neq x,y$.  Since $z_y\left(j_{s^{-1}_\chi(y)}, j_{s^{-1}_\chi(y) + 1}; k_y\right)$ is either $a^*(h)$ or $b^*(h)$ for some $h \in \H$, we obtain using equations (\ref{eq:q-comm-2}), (\ref{eq:q-comm-3}), (\ref{eq:q-comm-4}), (\ref{eq:q-comm-5}), and (\ref{eq:q-comm-6}) that
\[
\varphi_q\left(z_1\left(j_{s^{-1}_\chi(1)}, j_{s^{-1}_\chi(1) + 1}; k_1\right) \cdots z_n\left(j_{s^{-1}_\chi(n)}, j_{s^{-1}_\chi(n) + 1}; k_n\right) \right) = \pm q^m\varphi_q\left(\ldots z_y\left(j_{s^{-1}_\chi(y)}, j_{s^{-1}_\chi(y) + 1}; k_y\right)\right)
\]
for some $m \in \bN \cup \{0\}$.  Since $z_y\left(j_{s^{-1}_\chi(y)}, j_{s^{-1}_\chi(y) + 1}; k_y\right)\Omega_q = 0$, the claim follows.  Similarly if $\pi \in \Theta'$ is such that there exists $x \sim_\pi y$ with $x < y$ and $g(x) = t$ (so $g(y) = t\ast$), then
\[
\sum_{(j_1, \ldots, j_n) \in i(D'_\pi)} \varphi_q\left(z_1\left(j_{s^{-1}_\chi(1)}, j_{s^{-1}_\chi(1) + 1}; k_1\right) \cdots z_n\left(j_{s^{-1}_\chi(n)}, j_{s^{-1}_\chi(n) + 1}; k_n\right) \right) = 0.
\]
Thus we need only consider $\pi \in \Theta'$ satisfying (\ref{part:orientation}).

Suppose $\pi \in \Theta'$ satisfies (\ref{part:orientation}).  We claim that 
\[
\varphi_q\left(z_1\left(j_{s^{-1}_\chi(1)}, j_{s^{-1}_\chi(1) + 1}; k_1\right) \cdots z_n\left(j_{s^{-1}_\chi(n)}, j_{s^{-1}_\chi(n) + 1}; k_n\right) \right) = 1.
\]
Indeed, using just equations (\ref{eq:q-comm-1}), (\ref{eq:q-comm-2}), (\ref{eq:q-comm-3}), and (\ref{eq:q-comm-4})
one can write 
\[
z_1\left(j_{s^{-1}_\chi(1)}, j_{s^{-1}_\chi(1) + 1}; k_1\right) \cdots z_n\left(j_{s^{-1}_\chi(n)}, j_{s^{-1}_\chi(n) + 1}; k_n\right)
\]
as a product of
\begin{enumerate}[i)]
\item \[
a^{g'(1)}(h'_1) \cdots a^{g'(m)}(h'_m)
\]
where $\{h'_1, \ldots, h'_m\}$ are unit vectors, $g' : \{1,\ldots, m\} \to \{\cdot, \ast\}$, and there is a pair non-crossing partition $\pi'$ on $\{1,\ldots, m\}$ such that if $x \sim_\pi y$ and $x < y$ then $g'(x) = \ast$, $g'(y) = \cdot$, and $h'_x = h'_y$, and $h'_x \bot h'_y$ if $x \nsim_\pi y$,
\item \[
b^{g'(1)}(h'_1) \cdots b^{g'(m)}(h'_m)
\]
where $\{h'_1, \ldots, h'_m\}$ are unit vectors, $g' : \{1,\ldots, m\} \to \{\cdot, \ast\}$, and there is a pair non-crossing partition $\pi'$ on $\{1,\ldots, m\}$ such that if $x \sim_\pi y$ and $x < y$ then $g'(x) = \ast$, $g'(y) = \cdot$, and $h'_x = h'_y$, and $h'_x \bot h'_y$ if $x \nsim_\pi y$, and
\item 
\[
b^*(h) a(h) \qqand a^*(h) b(h)
\]
where $h$ is a unit vector.
\end{enumerate}
This is best seen via the following process on bi-non-crossing diagrams (thinking of equations (\ref{eq:q-comm-1}), (\ref{eq:q-comm-2}), (\ref{eq:q-comm-3}), and (\ref{eq:q-comm-4})) where $h_x \bot h_y$ if $x \neq y$.
\begin{align*}
\begin{tikzpicture}[baseline]
	\draw[thick, dashed] (-1,7.75) -- (-1,-.25) -- (1,-.25) -- (1,7.75);
	\draw[thick,black] (1,0) -- (0,0) -- (0, 1.5) -- (-1, 1.5);
	\draw[thick,black] (1,5.5) -- (0,5.5) -- (0, 3) -- (-1, 3);
	\draw[thick, black] (-1, 7.5) -- (-0.333, 7.5) -- (-0.333, 4) -- (-1, 4);
		\draw[thick, black] (-1, 6.5) -- (-0.667, 6.5) -- (-0.667, 4.5) -- (-1, 4.5);
			\draw[thick, black] (-1, 2.5) -- (-0.333, 2.5) -- (-0.333, 2) -- (-1, 2);
	\draw[thick, black] (1, 0.5) -- (0.333, .5) -- (.333, 5) -- (1,5);
	\draw[thick, black] (1, 6) -- (0.333, 6) -- (.333, 7) -- (1,7);
	\draw[thick, black] (1, 3.5) -- (0.667, 3.5) -- (.667, 1) -- (1,1);
	\node[left] at (-1, 7.5) {$a^*(h_6)$};
	\draw[black, fill=black] (-1,7.5) circle (0.05);	
		\node[right] at (1, 7) {$b^*(h_8)$};
	\draw[black, fill=black] (1,7) circle (0.05);
		\node[left] at (-1, 6.5) {$a^*(h_7)$};
	\draw[black, fill=black] (-1,6.5) circle (0.05);
		\node[right] at (1, 6) {$b(h_8)$};
	\draw[black, fill=black] (1,6) circle (0.05);
		\node[right] at (1, 5.5) {$b^*(h_5)$};
	\draw[black, fill=black] (1,5.5) circle (0.05);
		\node[left] at (-1, 4.5) {$a(h_7)$};
	\draw[black, fill=black] (-1,4.5) circle (0.05);
		\node[left] at (-1, 4) {$a(h_6)$};
	\draw[black, fill=black] (-1,4) circle (0.05);
		\node[right] at (1, 5) {$b^*(h_2)$};
	\draw[black, fill=black] (1,5) circle (0.05);
		\node[right] at (1,3.5) {$b^*(h_3)$};
	\draw[black, fill=black] (1,3.5) circle (0.05);
		\node[left] at (-1, 3) {$a(h_5)$};
	\draw[black, fill=black] (-1,3) circle (0.05);
		\node[left] at (-1, 2.5) {$a^*(h_4)$};
	\draw[black, fill=black] (-1,2.5) circle (0.05);
		\node[left] at (-1, 2) {$a(h_4)$};
	\draw[black, fill=black] (-1,2) circle (0.05);
		\node[left] at (-1, 1.5) {$a^*(h_1)$};
	\draw[black, fill=black] (-1,1.5) circle (0.05);
		\node[right] at (1, 1) {$b(h_3)$};
	\draw[black, fill=black] (1,1) circle (0.05);
		\node[right] at (1, .5) {$b(h_2)$};
	\draw[black, fill=black] (1,.5) circle (0.05);
		\node[right] at (1, 0) {$b(h_1)$};
	\draw[black, fill=black] (1,0) circle (0.05);
	\draw[thick, black] (2.5, 3.75) -- (4, 3.75) -- (3.95, 3.7);
	\draw[thick, black] (4, 3.75) -- (3.95, 3.8);
\end{tikzpicture}
\begin{tikzpicture}[baseline]
	\draw[thick, dashed] (-1,7.75) -- (-1,-.25) -- (1,-.25) -- (1,7.75);
	\draw[thick,black] (1,0) -- (0,0) -- (0, .5) -- (-1, .5);
	\draw[thick,black] (1,4.5) -- (0,4.5) -- (0, 4) -- (-1, 4);
	\draw[thick, black] (-1, 6.5) -- (-0.333, 6.5) -- (-0.333, 5) -- (-1, 5);
	\draw[thick, black] (-1, 6) -- (-0.667, 6) -- (-0.667, 5.5) -- (-1, 5.5);
		\draw[thick, black] (-1, 3.5) -- (-0.333, 3.5) -- (-0.333, 3) -- (-1, 3);
	\draw[thick, black] (1, 2.5) -- (0.333, 2.5) -- (.333, 1) -- (1,1);
	\draw[thick, black] (1, 7.5) -- (0.333, 7.5) -- (.333, 7) -- (1,7);
	\draw[thick, black] (1, 1.5) -- (0.667, 1.5) -- (.667, 2) -- (1,2);
	\node[left] at (-1, 6.5) {$a^*(h_6)$};
	\draw[black, fill=black] (-1,6.5) circle (0.05);	
		\node[right] at (1, 7.5) {$b^*(h_8)$};
	\draw[black, fill=black] (1,7.5) circle (0.05);
		\node[left] at (-1, 6) {$a^*(h_7)$};
	\draw[black, fill=black] (-1,6) circle (0.05);
		\node[right] at (1, 7) {$b(h_8)$};
	\draw[black, fill=black] (1,7) circle (0.05);
		\node[right] at (1, 4.5) {$b^*(h_5)$};
	\draw[black, fill=black] (1,4.5) circle (0.05);
		\node[left] at (-1, 5.5) {$a(h_7)$};
	\draw[black, fill=black] (-1,5.5) circle (0.05);
		\node[left] at (-1, 5) {$a(h_6)$};
	\draw[black, fill=black] (-1,5) circle (0.05);
		\node[right] at (1, 2.5) {$b^*(h_2)$};
	\draw[black, fill=black] (1,2.5) circle (0.05);
		\node[right] at (1,2) {$b^*(h_3)$};
	\draw[black, fill=black] (1,2) circle (0.05);
		\node[left] at (-1, 4) {$a(h_5)$};
	\draw[black, fill=black] (-1,4) circle (0.05);
		\node[left] at (-1, 3.5) {$a^*(h_4)$};
	\draw[black, fill=black] (-1,3.5) circle (0.05);
		\node[left] at (-1, 3) {$a(h_4)$};
	\draw[black, fill=black] (-1,3) circle (0.05);
		\node[left] at (-1, .5) {$a^*(h_1)$};
	\draw[black, fill=black] (-1,.5) circle (0.05);
		\node[right] at (1, 1.5) {$b(h_3)$};
	\draw[black, fill=black] (1,1.5) circle (0.05);
		\node[right] at (1, 1) {$b(h_2)$};
	\draw[black, fill=black] (1,1) circle (0.05);
		\node[right] at (1, 0) {$b(h_1)$};
	\draw[black, fill=black] (1,0) circle (0.05);
\end{tikzpicture}
\end{align*}
 Since each of the four possible products when applied to $\Omega_q$ produces $\Omega_q$, we obtain that
\[
z_1\left(j_{s^{-1}_\chi(1)}, j_{s^{-1}_\chi(1) + 1}; k_1\right) \cdots z_n\left(j_{s^{-1}_\chi(n)}, j_{s^{-1}_\chi(n) + 1}; k_n\right)\Omega_q = \Omega_q
\]
thereby completing the claim. Therefore
\[
\sum_{(j_1, \ldots, j_n) \in i(D'_\pi)} \varphi_q\left(z_1\left(j_{s^{-1}_\chi(1)}, j_{s^{-1}_\chi(1) + 1}; k_1\right) \cdots z_n\left(j_{s^{-1}_\chi(n)}, j_{s^{-1}_\chi(n) + 1}; k_n\right) \right) = |D'_\pi|
\]
whenever $\pi \in \Theta'$ satisfies (\ref{part:orientation}).

Hence, we have demonstrated that expression (\ref{eq:matrix-expanded}) is equal to
\[
\frac{1}{N^{\frac{n}{2} + 1}}\sum_{\substack{ \pi \in BNC_2(\chi) \\ \pi \text{ satisfies } (\ref{part:colour}) \text{ and }(\ref{part:orientation})}} |D'_\pi|.
\]
The first part of this proof shows, either there are no such $\pi$ (in which case we get both sides of equation (\ref{eq:to-show}) equal to zero), or there is a unique $\pi$ (in which case the right-hand-side of equation (\ref{eq:to-show}) was 1).  In the case of a unique $\pi$, since $|D'_\pi|N^{-\frac{n}{2} - 1} = 1 + O(\frac{1}{N})$ as $D'_\pi$ is the set of injective maps on  $N^{\frac{n}{2} + 1}$ vertices, we obtain the right-hand-side of equation (\ref{eq:to-show}) is also 1 thereby completing the proof.
\end{proof}

As Theorem \ref{thm:q-deformed} parallels \cite{S1995}*{Theorem 2.1}, we immediately obtain similar applications.  Indeed asymptotic bi-freeness from ``constant block matrix algebras" holds.  That is, for $n$ a positive integer divisor of $N$, let
\[
M_n(N) = \M_n(\bC) \otimes I_{\frac{N}{n}} \subseteq \M_n(\bC) \otimes \M_{\frac{N}{n}}(\bC) \subseteq \M_N(\bC).
\]
\begin{thm}
\label{thm:q-constant}
Let 
\[
\left\{Z^{\theta, \cdot}_k(N), Z^{\theta, *}_k(N), Z^{\theta, t}_k(N), Z^{\theta, t*}_k(N) \, \mid \, k \in K, \theta \in \{\ell, r\}\right\}
\]
be as in Theorem \ref{thm:q-deformed}, and let $n$ be a fixed positive integer.  For multiples $N$ of $n$, 
\[
\left( \left\{Z^{\ell, \cdot}_k(N), Z^{\ell, *}_k(N), Z^{\ell, t}_k(N), Z^{\ell, t*}_k(N)\right\}_{k \in K}, \left\{Z^{r, \cdot}_k(N), Z^{r, *}_k(N), Z^{r, t}_k(N), Z^{r, t*}_k(N) \right\}_{k \in K} \right)
\]
is asymptotically bi-free from $(L(M_n(N)), R(M_n(N)))$ with respect to $\Psi_N := \frac{1}{N} \tr \circ E_{\L(\X_N)}$ as $N \to \infty$.
\end{thm}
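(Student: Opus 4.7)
The plan is to adapt the polygon--quotient counting argument of Theorem \ref{thm:q-deformed} so that it accommodates additional ``scalar edges'' coming from entries of block matrices, and then to verify equation (\ref{eq:universal-polys}) of Theorem \ref{thm:universal-moment-bi-free} asymptotically. By the sufficiency clause of that theorem, it suffices to check the universal bi-free moment polynomials hold in the limit for every product $W_1\cdots W_n$ in which each $W_k$ is either a Fock bi-matrix $Z^{\chi(k),g(k)}_{k_k}(N)$ (assigned $\epsilon(k)=1$) or a block matrix operator $L(A_k)$ or $R(A_k)$ with $A_k \in M_n(N)$ (assigned $\epsilon(k)=2$).

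First, I would apply Lemma \ref{lem:expanding-matrices} together with Remark \ref{rem:indices-that-matter} to write $\Psi_N(W_1\cdots W_n)$ as a normalized sum over index tuples of $\varphi_q$ applied to a product of creation/annihilation operators at the $\epsilon=1$ positions, multiplied by scalar entries at the $\epsilon=2$ positions. Each block matrix entry has the form $(A_k)_{i,j} = \widetilde{A}_{k; i',j'}\,\delta_{i'',j''}$ under the decomposition $i=(i'-1)(N/n)+i''$, so the block matrix scalars factor out of the Fock expectation while their Kronecker deltas impose constraints on the fine indices. Running the polygon--quotient analysis of Theorem \ref{thm:q-deformed} only on the Fock positions, the Fock factor $\varphi_q(\cdots)$ forces asymptotic contributions to come from bi-non-crossing pair partitions $\pi_F$ of the Fock positions satisfying the colour and orientation conditions (\ref{part:colour}) and (\ref{part:orientation}) used there; each such Fock factor contributes $1$. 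The extra input is to track how $\pi_F$ forces consecutive or surrounding block matrix positions to share coarse indices, which organizes the block matrix positions into groups that carry a product of traces of the $\widetilde{A}_k$'s.

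Second, combining these two ingredients, I expect to obtain asymptotically
\begin{equation*}
\Psi_N(W_1\cdots W_n) \;=\; \sum_{\pi} \Psi_{N,\pi}(W_1,\ldots,W_n) \;+\; o(1),
\end{equation*}
where the sum runs over those $\pi \in BNC(\chi)$ whose restriction to $\epsilon^{-1}(\{1\})$ is an admissible Fock bi-non-crossing pair partition and whose blocks on $\epsilon^{-1}(\{2\})$ correspond to the outer groupings forced by $\pi_F$. Because all blocks of such $\pi$ respect $\epsilon$, the inner M\"obius sum $\sum_{\pi \le \sigma \le \epsilon}\mu_{BNC}(\pi,\sigma)$ collapses (it is non-zero only when $\pi \le \epsilon$, and equals a product of local M\"obius counts that the Fock combinatorics supply), showing that the asymptotic expression matches the right-hand side of equation (\ref{eq:universal-polys}). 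This verifies the asymptotic bi-freeness by Theorem \ref{thm:universal-moment-bi-free}.

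The main obstacle will be the combinatorial bookkeeping at the interface between the Fock pair structure and the block matrix trace structure: one must verify that the fine-index $\delta$'s from block matrix entries combine correctly with the vertex count of the polygon quotient (of Theorem \ref{thm:q-deformed}) so that each outer block of $\epsilon^{-1}(\{2\})$-positions produces exactly one factor of $\frac{1}{n}\tr_n$ on the block matrices it contains, rather than a spurious power of $N$. Once this accounting is established — essentially showing that extending the quotient graph by the block matrix edges preserves the maximal vertex count bound $\frac{n}{2}+1$ and saturates it precisely when $\pi$ lies above $\epsilon$ — the identification with the universal bi-free moment polynomials follows as in the final step of Theorem \ref{thm:asymptotic-bi-free-fock}.
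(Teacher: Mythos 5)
Your approach is genuinely different from the paper's, and substantially heavier. The paper's proof is a three-sentence reduction to prior results via a nested block decomposition: each $Z^{\theta,w}_k(N)$ is viewed as an $n\times n$ block matrix whose $(N/n)\times(N/n)$ submatrices are themselves bi-matrices of $q$-deformed creation/annihilation operators of size $N/n$; by Theorem~\ref{thm:q-deformed} the joint distribution of this whole collection of submatrices converges as $N/n\to\infty$ to that of left/right Fock creation and annihilation operators sitting in exactly the configuration required by Theorem~\ref{thm:asymptotic-bi-free-fock}; and the \emph{exact} bi-freeness from $(L(\M_n(\bC)),R(\M_n(\bC)^{\op}))$ in Theorem~\ref{thm:asymptotic-bi-free-fock} then gives the result in the limit. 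Your plan instead re-runs the polygon--quotient combinatorics with block-matrix positions interleaved and then tries to verify equation~(\ref{eq:universal-polys}) directly.

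Beyond being longer, your route has a step that needs real repair. You assert that for the surviving $\pi\in BNC(\chi)$ the inner M\"{o}bius sum $\sum_{\pi\le\sigma\le\epsilon}\mu_{BNC}(\pi,\sigma)$ ``collapses,'' and you write the asymptotic moment as a \emph{plain} sum $\sum_\pi \Psi_{N,\pi}(W_1,\dots,W_n)+o(1)$ with unit coefficients. That does not match the right-hand side of (\ref{eq:universal-polys}): since $\epsilon$ is generally not in $BNC(\chi)$, the inner sum is not the Kronecker delta $[\pi=\epsilon]$, and it can take values other than $0$ and $1$ for $\pi\le\epsilon$. Exactly this difficulty is why the proof of Theorem~\ref{thm:asymptotic-bi-free-fock} proceeds by a commutation/induction argument that treats both sides of (\ref{eq:universal-polys}) simultaneously rather than attempting to read off which $\pi$ survive with which weights. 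If you want to pursue the direct combinatorial route, you must either recompute the coefficients $\sum_{\pi\le\sigma\le\epsilon}\mu_{BNC}(\pi,\sigma)$ and show they coincide with what the polygon--quotient count produces, or mimic the inductive pass of Theorem~\ref{thm:asymptotic-bi-free-fock} in the $q$-deformed setting. You also leave unverified the vertex-count bound for the polygon with block-matrix ``scalar edges'' inserted, which you correctly flag as the crux. The paper's nested block decomposition avoids all of this.
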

\begin{proof}
Each $Z^{\theta, w}_k(N)$ can be decomposed into an $n \times n$ block matrix with $\frac{N}{n} \times\frac{N}{n}$ matrix entries.  By Construction \ref{cons:twisting-action}, Remark \ref{rem:non-commutative-probability-space-canonical-action}, and Theorem \ref{thm:q-deformed}, each $\frac{N}{n} \times\frac{N}{n}$ tends to a left or right creation or annihilation operator.  As these left or right creation or annihilation operators appear in the correct entries to invoke Theorem \ref{thm:asymptotic-bi-free-fock}, the result follows.
\end{proof}
Similarly, if $\Delta(N) \subseteq \M_N(\bC)$ denotes the subalgebra of diagonal matrices and $\{D_k(N)\}_{k \in K} \subseteq \Delta(N)$ are diagonal matrices such that $\{D_k(N)\}_{k \in K}$ has a limit distribution as $N \to \infty$ and $\sup_{N \in \bN} \left\| D_k(N)\right\| < \infty$ for all $k \in K$, then the following result holds as each $D_k(N)$ can be approximated by elements of $\Delta(N) \cap Y_n(N)$ for sufficiently large $n$ and $N$.
\begin{thm}
\label{thm:q-diagonal}
Let 
\[
\left\{Z^{\theta, \cdot}_k(N), Z^{\theta, *}_k(N), Z^{\theta, t}_k(N), Z^{\theta, t*}_k(N) \, \mid \, k \in K, \theta \in \{\ell, r\}\right\}
\]
be as in Theorem \ref{thm:q-deformed}, and let $n$ be a fixed positive integer.  Then
\[
\left( \left\{Z^{\ell, \cdot}_k(N), Z^{\ell, *}_k(N), Z^{\ell, t}_k(N), Z^{\ell, t*}_k(N)\right\}_{k \in K}, \left\{Z^{r, \cdot}_k(N), Z^{r, *}_k(N), Z^{r, t}_k(N), Z^{r, t*}_k(N) \right\}_{k \in K} \right)
\]
is asymptotically bi-free from $(\{L(D_k(N))\}_{k \in K}, \{R(D_k(N))\}_{k \in K} )$ with respect to $\Psi_N := \frac{1}{N} \tr \circ E_{\L(\X_N)}$ as $N \to \infty$.
\end{thm}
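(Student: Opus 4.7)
The plan is to deduce Theorem \ref{thm:q-diagonal} from Theorem \ref{thm:q-constant} by approximating each $D_k(N)$ by a block-constant diagonal matrix that lies in $\Delta(N) \cap M_n(N)$ for a suitable $n$. The basic idea is that a bounded diagonal matrix with a limiting spectral distribution can be approximated, up to an arbitrarily small operator-norm error, by a diagonal matrix that is constant on blocks of size $N/n$, and such matrices sit inside $M_n(N)$.

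First I would reduce to verifying the universal bi-free moment polynomials \eqref{eq:universal-polys} on a finite subfamily $F \subseteq K$. Fix $\epsilon > 0$. Using that $\sup_N \|D_k(N)\| < \infty$ for each $k \in F$ and that the family $\{D_k(N)\}_{k \in F}$ has a joint limit distribution, I would produce, for each sufficiently large $N$, a positive integer $n$ dividing $N$ together with diagonal matrices $\tilde{D}_k(N) \in \Delta(N) \cap M_n(N)$ satisfying $\|D_k(N) - \tilde{D}_k(N)\| < \epsilon$ for all $k \in F$. The construction is the obvious one: partition $\{1,\ldots,N\}$ into $n$ blocks of size $N/n$ and replace the diagonal of $D_k(N)$ on each block by a single representative value. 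Since the operator norms $\|D_k(N)\|$ are uniformly bounded and the limit distribution can be approximated by step functions, this is possible for $n$ large.

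Second, I would use a telescoping estimate to control the difference between a moment
\[
\tfrac{1}{N}\tr\!\circ E_{\L(\X_N)}\left(W\bigl(Z^{\theta_j,w_j}_{k_j}(N),\, L(D_{m_s}(N)),\, R(D_{m_t}(N))\bigr)\right)
\]
and the same moment with every $D_{m_s}(N)$ replaced by $\tilde{D}_{m_s}(N)$. The operators $Z^{\theta,w}_k(N)$ have operator norms bounded independently of $N$ (this is standard for the normalized $q$-deformed creation/annihilation matrices), the maps $L,R$ are contractive on $\M_N$ with respect to operator norm in the bimodule action, and $\tfrac{1}{N}\tr\circ E_{\L(\X_N)}$ is a state. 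Hence the telescoping estimate produces an error of order $O(\epsilon)$ depending only on the length of the word and the fixed norm bounds, and in particular independent of $N$.

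Third, for each fixed $n$, Theorem \ref{thm:q-constant} yields asymptotic bi-freeness of the creation/annihilation family from $(L(M_n(N)), R(M_n(N)))$, so the moments involving $\tilde{D}_k(N) \in M_n(N)$ satisfy the universal bi-free moment polynomials in the limit $N \to \infty$. Sandwiching the $D_k(N)$-moment between its $\tilde{D}_k(N)$-approximant plus an $O(\epsilon)$ error, passing to the limit in $N$ using Theorem \ref{thm:q-constant}, and then letting $\epsilon \to 0$ (with $n$ growing sufficiently slowly in $N$ so that both the approximation holds and Theorem \ref{thm:q-constant} applies) gives the desired bi-freeness. The main obstacle is organizing the two limits $N\to\infty$ and $n\to\infty$ in the correct order while keeping the joint distribution of $\{\tilde{D}_k(N)\}_{k\in F}$ asymptotically equal to that of $\{D_k(N)\}_{k\in F}$; because the $D_k(N)$ are diagonal and hence commute, their joint distribution under $\tfrac{1}{N}\tr$ is just the empirical joint distribution of the diagonal entries, so standard weak-convergence arguments for step-function approximation make this manageable.
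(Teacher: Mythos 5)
Your proposal follows the same strategy as the paper: reduce to Theorem~\ref{thm:q-constant} by approximating each $D_k(N)$ by a block-constant diagonal matrix lying in $\Delta(N) \cap M_n(N)$, then pass to the limit. However, the specific approximation step you describe is wrong as stated, and this is the crux of the proof.

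You claim that partitioning $\{1,\ldots,N\}$ into $n$ contiguous blocks of size $N/n$ and replacing the diagonal of $D_k(N)$ on each block by a representative value yields $\|D_k(N) - \tilde{D}_k(N)\| < \epsilon$. This is false in general: take $D(N) = \diag(0,1,0,1,\ldots)$, which is uniformly bounded and has empirical distribution converging to $\tfrac12(\delta_0 + \delta_1)$; any block-constant approximation on blocks of size $\geq 2$ has operator-norm error $\geq \tfrac12$. The hypotheses (uniform norm bound plus limiting joint distribution) give control of the \emph{empirical distribution} of diagonal entries, not of their ordering, and the ordering is what naive block-averaging requires. Your last paragraph hints that distributional approximation is what really matters, but your telescoping estimate needs operator-norm (or at least uniform entrywise) control, so the two halves of the proposal don't fit together.

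The missing ingredient is the observation that $\Psi_N$ and the asymptotic joint distribution from Theorem~\ref{thm:q-deformed} are invariant under simultaneous conjugation by $L_P R_{P^{-1}}$ for a permutation matrix $P$: this sends each $L(D_k(N))$ to $L(PD_k(N)P^{-1})$, each $R(D_k(N))$ to $R(PD_k(N)P^{-1})$, and each $Z^{\theta,w}_k(N)$ to the same kind of matrix built from the relabeled orthonormal set $\{h^k_{\sigma^{-1}(i),\sigma^{-1}(j)}\}$, so Theorem~\ref{thm:q-deformed} applies unchanged. After such a conjugation one may assume the diagonal tuples $(D_k(N))_{k\in F}$ are grouped by small cells in $\bC^{|F|}$ so that all but $O(1)$ of the blocks of size $N/n$ lie within a single cell and carry a genuine $\epsilon$-approximation; the remaining boundary blocks occupy an $O(1/n)$ fraction of indices, and one controls their contribution to any fixed mixed moment via the index-sum expansion of Lemma~\ref{lem:expanding-matrices} (not by an operator-norm telescoping, since $\Psi_N$ is not tracial and the bad-block error is not small in norm). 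Without this permutation-invariance and the refined error bound, the approximation step of your proof does not go through.
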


\begin{rem}
\label{rem:q-matrix-results}
We note that Theorem \ref{thm:q-deformed} may be used to obtain many bi-free analogues of random matrix results as \cite{S1995}*{Section 3} does.  However, we will not formal state all such results.

For one example, if $q = 1$ (the bosonic case) and $\{h_k\}_{k \in K} \cup \{h'_k\}_{k \in K}$ is an orthonormal set, then, as in \cite{S1995}*{Section 3}, $a(h_k) + a^*(h_k)$ and $b(h_m) + b^*(h_m)$ are distributed with respect to $\varphi_q$ as Gaussian random variables of variance 1, commute with each other, and have a covariance matrix dependent on $\langle h_k, h_m\rangle$.  Similarly $a(h_k) + a^*(h'_k)$ and $b(h_m) + b^*(h'_m)$ are distributed with respect to $\varphi_q$ as centred complex Gaussian random variables of variance 1, commute with each other, and have a covariance matrix dependent on $\langle h_k, h'_m\rangle$ and $\langle h'_k, h_m\rangle$.  Consequently, by using  $Z^{\theta, \cdot}_k(N) + Z^{\theta, *}_k(N)$ for $\theta \in \{\ell, r\}$ and Theorem \ref{thm:q-deformed} which determines the limit distributions, several bi-free central limit distributions (contained in those where left operator commute with right operators in distribution) may be obtained.  Furthermore, Theorem \ref{thm:ass-bi-free} may be obtain using this method and Theorems \ref{thm:q-constant} and \ref{thm:q-diagonal} imply such pairs of faces are asymptotically bi-free from ``constant block matrix algebras'' and ``constant diagonal matrices''.  These later results also hold in the fermonic case ($q = -1$).

Similar results hold for ``real Gaussian random matrices" by using the real and imaginary parts of $Z^{\theta, \cdot}_k(N) + Z^{\theta, t*}_k(N)$ for $\theta \in \{\ell, r\}$ since $Z^{\theta, \cdot}_k(N) + Z^{\theta, t*}_k(N)$ converges in distribution to a free circular element acting on either the left or right so that the real and imaginary parts are free semicircular elements.
\end{rem}

\section{Bi-Matrix Models producing Boolean and Monotone Central Limit Distributions}
\label{sec:boolean-monotone}

In this section, we demonstrate the existence of left and right matrices of creation and annihilation operators on a Fock space that, when combined in certain patterns, have asymptotic distributions equal to Boolean independent, Boolean central limit distributions and monotonically independent, monotone central limit distributions.  The main ideas behind these models arise from \cite{S2014}, which demonstrates ways Boolean and monotone independence arise from bi-free pairs of faces.  One motivation for these results is an alternate model for such distributions than those provided in \cite{L2014}.

\subsection*{Boolean Bi-Matrix Models}  Recall, by \cite{SW1997}, an operator $Z$ in a non-commutative probability space $(\A, \varphi)$ is a \emph{Boolean central limit distribution} (with variance 1) if 
\[
\varphi(Z^m) = \int_{\bR} t^m \, d\mu_B
\]
where $\mu_B = \frac{1}{2}\left( \delta_{-1} + \delta_1\right)$ (the average of two point-masses; one at $-1$ and one at 1).  Furthermore, operators $Z_1, \ldots, Z_n \in \A$ are said to be \emph{Boolean independent with respect to $\varphi$} if
\[
\varphi\left(Z_{k_1}^{w_1} \cdots Z^{w_m}_{k_m}\right) = \prod^m_{j=1} \varphi\left(Z^{w_j}_{k_j}\right)
\]
whenever $m \in \bN$,  $w_1, \ldots, w_m \in \bN$, and $k_1, \ldots, k_m \in \{1,\ldots, n\}$ are such that $k_j \neq k_{j+1}$ for all $j$.

Let $K$ be an arbitrary set and let $\{h_{j,k} \, \mid \, j \geq 1, k \in K\}$ be an orthonormal basis for a Hilbert space $\H$.  If $\X = \F(\H)$, for each $N \in \bN$ and $k \in K$ let
\[
T_k(N) := \sum^{\lfloor \frac{N}{2} \rfloor}_{j=1} l^*(h_{j,k}) \otimes  E_{2j-1, 2j}(N) + \sum^{\lfloor \frac{N-1}{2} \rfloor  }_{j=1} l(h_{j,k}) \otimes E_{2j, 2j+1}(N).
\]
That is, $T_k$ is an upper triangular matrix with the sequence $l^*(h_{1,k}), l(h_{1,k}), l^*(h_{2,k}), l(h_{2,k}), \ldots$ on the sub-diagonal and zero elsewhere.  Furthermore, let
\[
S(N) := \sum^N_{j=1} E_{j, j-1}(N).
\]
For each $k \in K$ let $L_k(N) := L(T_k(N))$ and $R(N) := R(S(N))$. 
\begin{thm}
Using the above notation, for each $k \in K$
\[
\lim_{N \to \infty} \frac{2}{N} \tr\left( E_{\L(\X_N)}\left( \left(L_k(N) R(N) \right)^m  \right)   \right) = \int_{\bR} x^m \, d\mu_B
\]
for all $m \in \bN$.  
Furthermore, the operators $\{L_k(N) R(N)\}_{k \in K}$ are asymptotic Boolean independent with respect to $\frac{2}{N} \tr \circ E_{\L(\X_N)}$.
\end{thm}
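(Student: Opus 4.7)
The plan is to compute $A^{(m)} := (L_k(N) R(N))^m I_{N, \Omega}$ explicitly as an element of $\X_N = \M_N(\X)$. Since the entries of $S(N)$ are scalars, Remark \ref{rem:commutative} yields $[L_k(N), R(N)] = 0$, though this is not essential. By Construction \ref{cons:twisting-action}, the operator $R(N)$ acts as a column shift, $(R(N)[\eta_{i,j}])_{i,j} = \eta_{i,j+1}$ (with $\eta_{i,N+1} := 0$), while $L_k(N)$ acts by left matrix multiplication with $T_k(N)$, whose only nonzero entries are $l^*(h_{j,k})$ at $(2j-1, 2j)$ and $l(h_{j,k})$ at $(2j, 2j+1)$.

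First I would prove by induction on $m$ the following explicit description: for odd $m$, $A^{(m)}$ has value $h_{x,k}$ at the diagonal position $(2x, 2x)$ for $x$ ranging over a set of cardinality $\lfloor (N-1)/2 \rfloor - \lfloor (m-1)/2 \rfloor$ and vanishes elsewhere; for even $m \geq 2$, $A^{(m)}$ has value $\Omega$ at $(2x-1, 2x-1)$ for $x$ in a similar range and vanishes elsewhere. The inductive step reduces to a single calculation: the creator $l(h_{x,k})$ at position $T_k(N)_{2x, 2x+1}$ converts $\Omega$ into $h_{x,k}$, while the annihilator $l^*(h_{x,k})$ at $T_k(N)_{2x-1, 2x}$ converts $h_{x,k}$ back into $\Omega$ via $\langle h_{x,k}, h_{x,k}\rangle = 1$; the shrinkage of the range reflects boundary effects at rows $1$ and $N$. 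Applying $p_{\X_N}$ and tracing, odd $m$ yields $0$ since $p_\X(h_{x,k}) = 0$, while even $m \geq 2$ yields $\frac{2}{N}\tr \to 1$, which matches $\int x^m \, d\mu_B = \tfrac{1}{2}((-1)^m + 1^m)$.

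For the Boolean independence claim I would consider $W = \prod_{j=1}^m (L_{k_j}(N) R(N))^{w_j}$ with $k_j \neq k_{j+1}$ and apply factors to $I_{N,\Omega}$ from right to left. The crucial colour-orthogonality observation is that if $B$ has $h_{x,k}$ at $(2x, 2x)$ and we apply $L_{k'}(N) R(N)$ with $k' \neq k$, then $l^*(h_{x,k'})$ acts on $h_{x,k}$ to produce $\langle h_{x,k}, h_{x,k'}\rangle \Omega = 0$, so $(L_{k'}(N) R(N)) B = 0$. Consequently, if some $w_j$ with $j \geq 2$ is odd, then after $(L_{k_j} R)^{w_j}$ the intermediate state carries colour $k_j$ and is annihilated by the next factor; if $w_1$ is odd, the final state carries $h_{\cdot, k_1}$ on its diagonal, so $p_\X$ gives zero. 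In either subcase $\Psi_N(W) = 0$, matching the product side, since each $\Psi_N((L_{k_j} R)^{w_j})$ is exactly $0$ when $w_j$ is odd. When all $w_j$ are even, each $(L_{k_j} R)^{w_j}$ preserves the ``$\Omega$-at-odd-positions'' pattern (which carries no colour label, explaining the insensitivity to $k_j$) while shrinking the range by $O(w_j)$, yielding $\Psi_N(W) \to 1 = \prod_j \Psi_N((L_{k_j} R)^{w_j})$.

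The main obstacle is nothing beyond careful index bookkeeping through the induction, particularly near the edges of the matrix; once the alternating pattern is secured, both assertions follow by direct inspection, with Boolean independence resting entirely on the single colour-orthogonality cancellation noted above.
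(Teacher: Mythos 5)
Your proposal is correct and follows essentially the same route as the paper: both compute the action of the word on $I_{N,\Omega}$ explicitly, identify the nonzero diagonal entries as products of creation/annihilation operators applied to $\Omega$ (the paper writes the general mixed word's diagonal entry in closed form, you track the alternating state by induction and kill mixed-colour words via the orthogonality $\langle h_{x,k}, h_{x,k'}\rangle = 0$), and then count diagonal $\Omega$'s against the normalization $\tfrac{2}{N}\tr$.
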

\begin{proof}
The idea of the proof is simply to compute the diagonal entries of
\[
L_{\epsilon(1)}(N) R(N) L_{\epsilon(2)}(N) R(N) \cdots L_{\epsilon(n)}(N) R(N) I_{N, \Omega}
\]
for each $n \in \bN$ and  $\epsilon : \{1, \ldots, n\} \to K$.  Indeed we claim the $x^{\mathrm{th}}$ entry along the diagonal is
\[
\left\{
\begin{array}{ll}
l^*_{y, \epsilon(1)} l_{y, \epsilon(2)} l^*_{y+1, \epsilon(3)} l_{y+1, \epsilon(4)} \cdots & \mbox{if } x=2y-1 \leq N - n  \\
l_{y, \epsilon(1)} l^*_{y+1, \epsilon(2)} l_{y+1, \epsilon(3)}, l^*_{y+2, \epsilon(4)} \cdots & \mbox{if } x = 2y \leq N - n \\
0 & \mbox{otherwise}
\end{array} \right. .
\]
This can be verified by appealing to Remark \ref{rem:commutative} and a direct computation of
\[
T_{\epsilon(1)}(N)T_{\epsilon(2)}(N) \cdots T_{\epsilon(n)}(N) S(N)^n.
\]
One then sees that
\begin{align*}
& \frac{2}{N} \tr\left( E_N\left( L_{\epsilon(1)}(N) R(S(N)) L_{\epsilon(2)}(N) R(S(N)) \cdots L_{\epsilon(n)}(N) R(S(N)) \right) \right) \\
&= \frac{2}{N} \left\{
\begin{array}{ll}
\max\{0, \lfloor\frac{N-n}{2}\rfloor \} & \mbox{if } n \mbox{ is even and } \epsilon(2m-1) = \epsilon(2m) \mbox{ for all }m  \\
0 & \mbox{otherwise}
\end{array} \right. .
\end{align*}
As $\lim_{N \to \infty} \frac{2}{N}\max\{0, \lfloor\frac{N-n}{2}\rfloor \} = 1$, the result follows.
\end{proof}

\begin{rem}
Note one may instead use
\[
T_k(N) := \sum^{\lfloor \frac{N}{2} \rfloor}_{j=1} l^*(h_{1,k}) \otimes  E_{2j-1, 2j}(N) + \sum^{\lfloor \frac{N-1}{2} \rfloor  }_{j=1} l(h_{1,k}) \otimes  E_{2j, 2j+1}(N)
\]
and the above computations will still work (i.e. we only need $|K|$ creation/annihilation operators).
\end{rem}

\subsection*{Monotone Bi-Matrix Models} For the monotone setting, as the main idea is taken from \cite{S2014} where only monotonic independence of at most two algebras is obtained, we will only produce two asymptotically monotonically independent semicircular distributions.  Recall from \cite{M2001} that the monotone central limit distributions are also semicircular operators and two operators $Z_1$ and $Z_2$ in a non-commutative probability space $(\A, \varphi)$ are said to be \emph{monotonically independent with respect to $\varphi$} if
\[
\varphi\left(Z_2^{m_1} Z_1^{k_1} Z_2^{m_2} \cdots Z_1^{m_n} Z_2^{k_n} Z_1^{m_{n+1}} \right) = \varphi\left(Z_1^{k_1 + \cdots + k_n}\right) \prod^{n+1}_{j=1} \varphi\left(Z_2^{m_j}\right).
\]
for all $n \in \bN \cup \{0\}$, $m_1, m_{n+1} \in \bN \cup \{0\}$, and $k_j, m_j \in \bN$.

Three types of operators will need to be considered.  Again let $\X = \F(\H)$.  Fix $N \in \bN$ and let $\{h_n \, \mid\, n \in \{0, 1, \ldots, N\}\}$ be an orthonormal set in a Hilbert space $\H$.  Let $s(h_k) = l(h_k) + l^*(h_k)$, let
\[
T_1(N) = L\left( \sum^{N-1}_{j=1}   s(h_0)   \otimes E_{j+1, j}  \right) \qqand S_1(N) = R\left( \sum^{N-1}_{j=1}  E_{j, j+1}  \right).
\]
In addition, let
\[
T_2(N) = L\left(\diag(s(h_1), \ldots, s(h_n)) \right).
\]
Let $s_1, s_2$ be two semicircular variables each of variance 1 that are monotonically independent with respect to a state $\psi$.

\begin{thm}
With the above notation the joint distribution of $T_1(N)S_1(N)$ and $T_2(N)$ with respect to $\frac{1}{N} \tr \circ E_{\L(\X_N)}$ asymptotically tend to the distribution of $\{s_1, s_2\}$ with respect to $\psi$.
\end{thm}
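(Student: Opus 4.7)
The plan is to compute the matrix action of an arbitrary word in $T_1(N)S_1(N)$ and $T_2(N)$ applied to $I_{N,\Omega}$, observe that such products always produce diagonal matrices, and then evaluate the vacuum expectation of the diagonal entries by exploiting the freeness of $\{s(h_j)\}_{j \geq 0}$ as a standard semicircular family on $\F(\H)$.  Writing an arbitrary word as $W = T_2^{m_1}(T_1S_1)^{k_1}T_2^{m_2}\cdots(T_1S_1)^{k_n}T_2^{m_{n+1}}$ and setting $K := k_1 + \cdots + k_n$, the identity to be verified is
\[
\lim_{N\to\infty}\tfrac{1}{N}\tr\bigl(E_{\L(\X_N)}(W)\bigr) = \psi(s_1^{K})\prod_{j=1}^{n+1}\psi(s_2^{m_j}),
\]
whose right-hand side equals $\psi(s_2^{m_1}s_1^{k_1}\cdots s_1^{k_n}s_2^{m_{n+1}})$ by the definition of monotone independence.

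First I would use the formulas of Construction \ref{cons:twisting-action} together with Remark \ref{rem:commutative} (the scalar entries of $S_1(N)$ commute with every entry of $T_1(N)$) to verify that
\[
T_1(N)S_1(N)[\eta_{i,j}] = [s(h_0)\eta_{i-1,j-1}] \qand T_2(N)[\eta_{i,j}] = [s(h_i)\eta_{i,j}],
\]
with the convention $\eta_{0,*} = \eta_{*,0} = 0$, so that $(T_1S_1)^k[\eta_{i,j}] = [s(h_0)^k\eta_{i-k,j-k}]$.  A right-to-left induction on $W$ then shows that $W\cdot I_{N,\Omega}$ is diagonal with $(i,i)$-entry $W_i\,\Omega$, where
\[
W_i := s(h_i)^{m_1}\,s(h_0)^{k_1}\,s(h_{i-k_1})^{m_2}\cdots s(h_0)^{k_n}\,s(h_{i-K})^{m_{n+1}},
\]
for $i \geq K+1$, and zero for $i \leq K$.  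For such $i$ the outer indices $i,\, i-k_1,\ldots, i-K$ are strictly decreasing positive integers, hence pairwise distinct and all different from $0$.

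Next, since $\{s(h_j)\}_{j \geq 0}$ is a free semicircular family of variance one under $\varphi_0$, I would evaluate $\varphi_0(W_i) = p_\X(W_i\Omega)$ via the free moment-cumulant formula.  Only non-crossing pair partitions contribute, and the vanishing of mixed free cumulants forces every pair to match two operators carrying the same index $h_\bullet$.  Each of the $n+1$ outer blocks $s(h_{i-k_1-\cdots-k_{j-1}})^{m_j}$ must therefore be paired internally, while the $K$ inner $s(h_0)$-positions pair only among themselves.  Because the outer-block pairings sit in disjoint contiguous intervals, they impose no crossing obstruction on the $s(h_0)$-pairing, which then ranges over every non-crossing pair partition of the $K$ inner positions; the sum factorises to
\[
\varphi_0(W_i) = \psi(s_1^{K})\prod_{j=1}^{n+1}\psi(s_2^{m_j}),
\]
uniformly in $i > K$, since the count of non-crossing pair matchings on $m$ points is the $m$-th moment of a standard semicircular.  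Summing over $i$ and dividing by $N$ then yields the factor $(N-K)/N \to 1$, completing the verification for every $W$.

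The main obstacle is the combinatorial decoupling above: one must verify carefully that every colour-respecting non-crossing pair partition of the positions in $W_i$ splits cleanly as a choice of internal non-crossing matchings within each outer block together with an unobstructed non-crossing matching of the inner $s(h_0)$-positions, with no cross-interference.  The distinctness of the outer indices is essential here, as otherwise a pair could bridge two different outer blocks; once this bookkeeping is in place (exploiting the multiplicative property of $\mu_{NC}$ recalled after Definition \ref{defn:mobius}), the remainder is routine and one reads off exactly the monotone-independence factorisation.
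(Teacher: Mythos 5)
Your proposal is correct and follows essentially the same route as the paper: compute the action of the word on $I_{N,\Omega}$, observe it is diagonal with $i$-th entry a product of $s(h_0)$-powers interleaved with powers of distinct semicirculars, apply $\varphi_0$ using freeness to factor, and average over $i$. Your explicit cumulant bookkeeping simply spells out the step the paper compresses into ``due to freeness,'' and your outer-index formula $s(h_i)^{m_1}s(h_0)^{k_1}s(h_{i-k_1})^{m_2}\cdots s(h_{i-K})^{m_{n+1}}$ is in fact the precise one (the paper's displayed indices $y+n,\ldots,y$ are an informal shorthand that is only literal when all $k_j=1$); what matters for the argument, and what you correctly isolate, is that these outer indices are distinct positive integers so that freeness gives the clean decoupling.
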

\begin{proof}
It suffices to show that
\begin{align*}
\lim_{N \to \infty} & \frac{1}{N} \tr\left(E_{\L(\X_N)}\left( T_2(N)^{m_1} (T_1(N)S_1(N))^{k_1} T_2(N)^{m_2}  \cdots T_2(N)^{m_n} (T_1(N)S_1(N))^{k_n}  T_2(N)^{m_{n+1}} \right)     \right) \\
&= \psi\left(s_2^{m_1} s_1^{k_1} s_2^{m_2} \cdots s_1^{m_n} s_2^{k_n} s_1^{m_{n+1}}\right)
\end{align*}
where $n, m_1, m_{n+1} \in \bN \cup \{0\}$, and $k_j, m_j \in \bN$.

Recall the definition of monotonic independence implies
\[
\psi\left(s_2^{m_1} s_1^{k_1} s_2^{m_2} \cdots s_1^{m_n} s_2^{k_n} s_1^{m_{n+1}}\right) = \psi\left(s_1^{k_1 + \cdots + k_n}\right) \prod^{n+1}_{j=1} \psi\left(s_2^{m_j}\right).
\]
We claim that
\[
T_2(N)^{m_1} (T_1(N)S_1(N))^{k_1} T_2(N)^{m_2}  \cdots T_2(N)^{m_n} (T_1(N)S_1(N))^{k_n}  T_2(N)^{m_{n+1}}  I_{N, \Omega}
\]
is a diagonal matrix $\diag(z_1, \ldots, z_N)$ where
\[
z_x  = \left\{
\begin{array}{ll}
0 & \mbox{if } x \leq \sum^n_{j=1} k_j  \\
 s(h_{y+n})^{m_{1}}  s(h_0)^{k_1}   s(h_{y+n-1})^{m_{2}}   \cdots   s(h_{y+1})^{m_{n}}  s(h_0)^{k_n}   s(h_{y})^{m_{n+1}}   & \mbox{if } x = y + \sum^n_{j=1} k_j
\end{array} \right. .
\]
Indeed the above can be verified since
\[
T_2(N)^{m} \diag(\xi_1, \ldots, \xi_N) = \diag(s(h_1)^m\xi_1, \ldots,  s(h_N)^m\xi_N)
\]
and
\[
(T_1(N)S_1(N)) \diag(\xi_1, \ldots, \xi_N) = \diag(0, s(h_0) \xi_1,  s(h_0) \xi_2, \ldots, s(h_0) \xi_{N-1}).
\]
for all $\diag(\xi_1, \ldots, \xi_N) \in \X_N$.  Since
\[
\varphi\left(s(h_{y+n})^{m_{1}}  s(h_0)^{k_1}   \cdots   s(h_{y+1})^{m_{n}}  s(h_0)^{k_n}   s(h_{y})^{m_{n+1}}\right) = \varphi\left(s(h_0)^{k_1 + \cdots + k_n}\right) \prod^{n+1}_{j=1} \varphi\left(s(h_{y+n+1-j})^{m_j}  \right)
\]
due to freeness, and since each $s(h_k)$ is a semicircular operator of variance 1, the result now follows as the zero diagonal terms are irrelevant as $N \to \infty$.
\end{proof}

\section*{Acknowledgements}

The author would like to thank Dimitri Shlyakhtenko for referring him to the non-commutative case and the papers \cites{S1995, S1997}.

\end{document}